\newcommand{\ZZ}{{\mathbb Z}}
\newcommand{\RR}{{\mathbb R}}
\newcommand{\causs}{\vec{\sigma}} 
\newcommand{\emB}{\mathbf{B}}
\newcommand{\emC}{\mathbf{C}}
\newcommand{\emH}{\mathbf{H}}
\newcommand{\emD}{\mathbf{D}}
\newcommand{\emG}{\mathbf{G}}
\newcommand{\GG}{\mathrm{G}}
\newcommand{\DD}{{\mathbb{D}}}
\newcommand{\II}{{\mathbf I}}
\newcommand{\bg}{\mathbf{g}}
\newcommand{\ff}{\mathbf{f}}
\newcommand{\nn}{\mathbf{n}}
\newcommand{\ee}{\mathrm{e}}
\newcommand{\hh}{\mathbf{h}}
\newcommand{\uu}{\mathbf{u}}
\newcommand{\UU}{\mathbf{U}}
\newcommand{\vv}{\mathbf{v}}
\newcommand{\ww}{\mathbf{w}}
\renewcommand\vec[1]{\boldsymbol #1} 
\newcommand{\nRe}{{R}_{{e}}}
\newcommand{\nRm}{{R}_{{m}}}
\newcommand{\nAl}{{A}_{{l}}}
\newcommand{\per}{\mathrm{per}}
\newcommand{\hmeas}{\mathcal{H}^{d-1}}
\newcommand{\magn}{\mathrm{mag}}
\newcommand{\abs}[1]{\left\lvert #1\right\rvert}
\newcommand{\norm}[1]{\left\lVert #1\right\rVert}
\newcommand{\dualpair}[3]{\sideset{_{#2}}{_{#3}}{\mathop{\left\langle
        #1 \right\rangle}}}
\newcommand{\cv}{\xrightarrow[\hphantom{~2~}]{}}
\newcommand{\tscale}{\xrightharpoonup[]{~2~}}
\newcommand{\wcv}[1][]{%
\ifthenelse{\isempty{#1}}{\xrightharpoonup[\hphantom{~2~}]{}}{\xrightharpoonup[\hphantom{~2~}]{#1}}%
}
\newcommand{\calA}{\mathcal{A}}
\newcommand{\calB}{\mathcal{B}}
\newcommand{\calC}{\mathcal{C}}
\newcommand{\calD}{\mathcal{D}}
\newcommand{\calE}{\mathcal{E}}
\newcommand{\calH}{\mathfrak{H}}
\newcommand{\calL}{\mathcal{L}}
\newcommand{\calN}{\mathcal{N}}
\newcommand{\calM}{\mathcal{M}}
\newcommand{\calQ}{\mathcal{Q}}
\newcommand{\calP}{\mathfrak{P}}
\newcommand{\calU}{\mathfrak{U}}
\newcommand{\calV}{\mathfrak{V}}
\newcommand{\calX}{\mathfrak{X}}
\newcommand{\calY}{\mathfrak{Y}}
\DeclareMathOperator{\diam}{diam}
\DeclareMathOperator{\supp}{supp}
\DeclareMathOperator{\di}{d\!}
\DeclareMathOperator{\Div}{{div}}
\DeclareMathOperator{\Curl}{{curl}}
\DeclareMathOperator{\Bog}{{Bog}}
\def\XXint#1#2#3{{\setbox0=\hbox{$#1{#2#3}{\int}$ }
\vcenter{\hbox{$#2#3$ }}\kern-.6\wd0}}
\providecommand\given{} 
\newcommand\givensymbol[1]{%
  \nonscript\;\delimsize#1\allowbreak\nonscript\;\mathopen{}%
}
\DeclarePairedDelimiterX\Set[1]\{\}{%
  \renewcommand\given{\givensymbol{\vert}}%
  #1%
}
\crefname{hypothesis}{Hypothesis}{Hypotheses}
\title{Homogenization of a non-linear strongly coupled model of
  magnetorheological fluids\thanks{Submitted to the editors DATE.
    \funding{The work of the first author was partially supported by
      the NSF through grant DMS-1350248. The work of the third author
      was partially supported by NSF grant DMS-2110036.}}}
\author{
   Thuyen Dang\thanks{Department of Mathematics, University of
    Houston, Houston, TX 77204 USA
    (\email{ttdang9@uh.edu}).} 
  \and
  Yuliya Gorb\thanks{National Science Foundation, Alexandria, VA 22314 USA
    (\email{ygorb@nsf.gov}).}
    \and
  Silvia Jim\'{e}nez Bola\~{n}os\thanks{Department of
    Mathematics, Colgate University, Hamilton, NY 13346 USA
    (\email{sjimenez@colgate.edu}).} 
}
\begin{document}

\maketitle

\begin{abstract}
  This paper concerns the rigorous periodic homogenization for a non-linear
  strongly coupled system, which models a suspension of magnetizable
  rigid particles in a non-conducting carrier viscous Newtonian
  fluid. The fluid drags the particles, thus alters the magnetic
  field. Vice versa, the magnetic field acts on the particles, which in turn affect the fluid via the no-slip boundary condition.  As
  the size of the particles approaches zero, it is shown that the suspension's behavior is governed by a generalized
  magnetohydrodynamic system, where the fluid is modeled by a stationary
  Navier-Stokes system, while the magnetic field is modeled by Maxwell equations. A corrector result from the theory of
  two-scale convergence allows us to obtain the limit of the product of
  several weakly convergent sequences, where the div-curl lemma, which is a typical tool in these types of problems,
  is not applicable.
\end{abstract}

\begin{keywords}
  Homogenization, two-scale convergence, Navier-Stokes equation, strong coupling, magnetic particles.
\end{keywords}

\begin{AMS}
  35B27, 74E30, 74F10, 76M50, 78M40.
\end{AMS}

\section{Introduction}
\label{sec:introduction}

This paper is a counterpart of our previous works \cite{dangHomogenizationNondiluteSuspension2021,dangGlobalGradientEstimate2021a}, where we carried out the rigorous periodic homogenization of a \emph{weakly (one-way) coupled} non-linear system modeling a non-dilute suspension of magnetizable particles in a viscous Newtonian fluid. In \cite{dangHomogenizationNondiluteSuspension2021,dangGlobalGradientEstimate2021a}, the fluid is assumed to be described by the stationary Stokes flow, and the particles are either
paramagnetic or diamagnetic. The \emph{one-way coupling} is understood as follows: the magnetic field alters the movement of the magnetizable particles, then the particles affect the fluid flow via a no-slip boundary assumption; however, the reverse effect is assumed to be negligible. For details and information about the manifestations of the one-way coupling (as well as of the full coupling), its applications, and further literature on the subject, we refer the reader to \cite{dangHomogenizationNondiluteSuspension2021,dangGlobalGradientEstimate2021a}
and references cited therein. In this paper, the \emph{full (two-way) coupling} is considered, i.e. we also take into account the reverse effect: the fluid flow pushes the particles, thus generates an induced magnetic field that acts back on the original one. The mathematical formulation of the fully coupled model of the magnetic non-dilute suspension is given in
\cref{sec:formulation} below. 

Starting with the seminal work of Einstein on the effective viscosity of a
suspension \cite{einsteinNeueBestimmungMolekuldimensionen1906}, there
have been numerous studies on this subject, ranging from formal
asymptotic analysis such as
\cite{levySuspensionSolidParticles1983,levyEinsteinlikeApproximationHomogenization1985,levyHomogenizationMechanicsNondilute1988}
to rigorous analysis,
e.g. \cite{niethammerLocalVersionEinstein2020,desvillettesMeanfieldLimitSolid2008,gorbHomogenizationRigidSuspensions2014,hoferSedimentationInertialessParticles2018,mecherbetSedimentationParticlesStokes2019,gerard-varetAnalysisViscosityDilute2020,duerinckxCorrectorEquationsFluid2021,hoferMotionSeveralSlender2021,hainesProofEinsteinEffective2012,duerinckxSedimentationRandomSuspensions2021,duerinckxEinsteinEffectiveViscosity2020,duerinckxQuantitativeHomogenizationTheory2021,berlyandFictitiousFluidApproach2009a,berlyandHomogenizedNonNewtonianViscoelastic2004}
and references cited therein. The coupling between the velocity and the
magnetic fields distinguishes our paper from the previously cited. In this paper, \emph{we propose a non-linear system to
  model the  two-way coupling in the magnetorheological fluid, and derive, and rigorously justify, the corresponding
  effective system}. 
  
  To overview the literature on this topic, we start with the phenomenological models proposed in
e.g.
\cite{neuringerFerrohydrodynamics1964,odenbachFerrofluidsMagneticallyControllable2008,eringenElectrodynamicsContinuaII2011,nochettoEquationsFerrohydrodynamicsModeling2016,grunFieldinducedTransportMagnetic2019}, 
whose well-posedness were studied in
e.g. \cite{nochettoDynamicsFerrofluidsGlobal2019,grunFieldinducedTransportMagnetic2021,garckeStrongWellposednessStability2021a,benesovaExistenceWeakSolutions2018a,schlomerkemperUniquenessSolutionsMathematical2018a}.  A coupling mechanism, similar to the one discussed in this paper, was also considered in
\cite{nikaMultiscaleModelingMagnetorheological2020}, where a different model describing fluids was used. The authors in \cite{nikaMultiscaleModelingMagnetorheological2020} though obtained the results using {\it formal
asymptotic analysis}.  Although similar models in different contexts were also studied in
\cite{vernescuMultiscaleAnalysisElectrorheological2002,francfortEnhancementElastodielectricsHomogenization2021}, to the best of our knowledge, this paper is the first one to deal with the fully-coupled model for magnetorheological fluids using the {\bf rigorous homogenization approach}. Lastly, we mention that the {\it rigorous homogenization} for the system described by  one-way fluid-particle coupling
was solved in
\cite{dangHomogenizationNondiluteSuspension2021,dangGlobalGradientEstimate2021a}
with a fairly general assumption on the smoothness of the coefficients. 

In what follows below,
after a non-linear model for the magnetorheological fluid is
established, we obtain the well-posedness and a priori estimates for its
solution by adapting the general functional analysis framework of
stationary magnetohydrodynamics, c.f.
\cite{schotzauMixedFiniteElement2004,gunzburgerExistenceUniquenessFinite1991,gerbeauMathematicalMethodsMagnetohydrodynamics2006,guermondMixedFiniteElement2003}
and references therein. Then, the two-scale convergence method, c.f.
\cite{allaireHomogenizationTwoscaleConvergence1992,nguetsengGeneralConvergenceResult1989,berlyandGettingAcquaintedHomogenization2018,cioranescuIntroductionHomogenization1999},
is utilized to obtain the effective, or {\it homogenized}, system. The main difficulty lies
in the non-linearity of the system, c.f. \eqref{eq:575-nd} and
\eqref{eq:579-nd}, and, the full coupling mechanism captured by
\eqref{eq:575-nd}, \eqref{eq:579-nd} and \eqref{eq:13} 
that make the choice of suitable
oscillating test functions in the energy method by Tartar
\cite{tartarGeneralTheoryHomogenization2009}, which is a typical tool in homogenization problems, to become extremely
tricky.  To overcome
this difficulty, we rely on the corrector result from the two-scale
convergence method, see \cref{sec:two-scale-corrector}.
The results obtained in this paper can be extended to the stochastic setting,
thanks to the work on stochastic two-scale convergence,
c.f. \cite{bourgeatStochasticTwoscaleConvergence1994,zhikovHomogenizationRandomSingular2006,heidaExtensionStochasticTwoscale2011,heidaStochasticTwoscaleConvergence2021}
and references cited therein. 

This paper is organized as follows. In
\cref{sec:formulation}, the main notations are introduced and
the formulation of the fine-scale problem is discussed. Our main
result is stated in \cref{sec:main-results}, and the conclusions
are given in \cref{sec:conclusions}.

\section{Formulation}
\label{sec:formulation}
\subsection{Notation}

Throughout this paper, the scalar-valued functions, such as the
pressure $p$, are written in usual typefaces, while vector-valued or
tensor-valued functions, such as the velocity $\uu$ and the Cauchy
stress tensor $\causs$, are written in bold.
Sequences are indexed by
  superscripts ($\phi^i$), while elements of vectors or tensors
are indexed by numeric subscripts ($x_i$). Finally, the Einstein
summation convention is used whenever applicable; $\delta_{ij}$ is the Kronecker delta, and $\epsilon_{ijk}$ is the Levi-Civita permutation symbol.
\subsection{Set up of the problem}
\label{ss:setup}
Consider $\Omega \subset \RR^d$, for $d \ge 2$, a simply connected and
bounded domain of class $C^{1,1}$, and let
$Y\coloneqq (0,1)^d$ be the unit cell in $\RR^d$. The unit cell $Y$ is
decomposed into:
$$Y=Y_s\cup Y_f \cup \Gamma,$$
where $Y_s$, representing the magnetic
inclusion, and $Y_f$, representing the fluid domain, are open sets in
$\mathbb{R}^d$, and $\Gamma$ is the closed $C^{1,1}$ interface that
separates them.
Let $i = (i_1, \ldots, i_d) \in \ZZ^d$ be a vector of indices and $\{\ee^1, \ldots, \ee^d\}$ be the canonical basis of $\RR^d$. 
For a fixed small $\varepsilon > 0,$ we define the dilated sets: 
\begin{align*}
    Y^\varepsilon_i 
    \coloneqq \varepsilon (Y + i),~~
    Y^\varepsilon_{i,s}
    \coloneqq \varepsilon (Y_s + i),~~
    Y^\varepsilon_{i,f}
    \coloneqq \varepsilon (Y_f + i),~~
    \Gamma^\varepsilon_i 
    \coloneqq \partial Y^\varepsilon_{i,s}.
\end{align*}
Typically, in homogenization theory, the positive number $\varepsilon
\ll 1$ is referred to as the {\it size of the microstructure}. The
effective or homogenized response of the given suspension
corresponds to the case $\varepsilon=0$, whose  derivation and justification is the main focus of this paper. 

We denote by $\nn_i,~\nn_{\Gamma}$ and $\nn_{\partial \Omega}$ the unit normal vectors to $\Gamma^{\varepsilon}_{i}$ pointing outward $Y^\varepsilon_{i,s}$, on $\Gamma$ pointing outward $Y_{s}$ and on $\partial \Omega$ pointing outward, respectively; and also, we denote by $\di \hmeas$ the $(d-1)$-dimensional Hausdorff measure.
In addition, we define the sets:
\begin{align*}
    I^{\varepsilon} 
    \coloneqq \{ 
    i \in \ZZ^d \colon Y^\varepsilon_i \subset \Omega
    \},~~
    \Omega_s^{\varepsilon} 
    \coloneqq \bigcup_{i\in I^\varepsilon}
Y_{i,s}^{\varepsilon},~~
    \Omega_f^{\varepsilon} 
    \coloneqq \Omega \setminus \Omega_s^{\varepsilon},~~
    \Gamma^\varepsilon 
    \coloneqq \bigcup_{i \in I^\varepsilon} \Gamma^\varepsilon_i.
\end{align*}
see \cref{fig:1}.

\begin{figure}
\centering
\includegraphics[scale=.35]{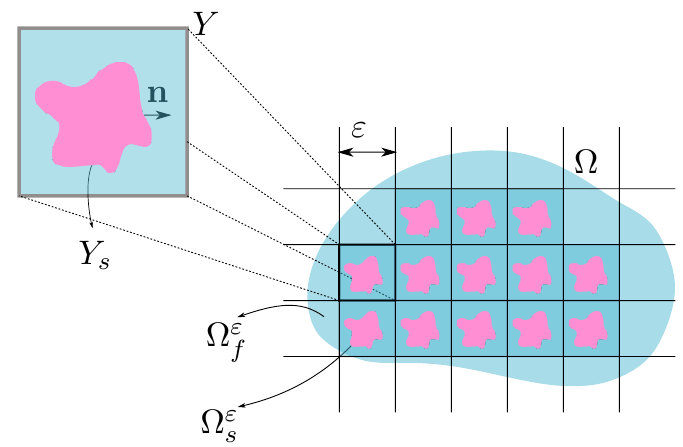}
\caption{Reference cell $Y$ and domain $\Omega$.}
\label{fig:1}
\end{figure}


\subsection{The model}
\label{sec:simplified-model}
Denote by $\rho_f,\rho_s, \nu_{es}, \mu$ and $\bg$ the (mass)
density of fluid, the density of inclusions, the electric conductivity of
inclusions, the magnetic permeability and the external force field, respectively.  The
unknowns include the fluid velocity $\uu^{\varepsilon}$, the fluid pressure
$p^{\varepsilon}$ and the magnetic field $\emB^{\varepsilon}$ (which in
turn, determines the magnetising field $\emH^{\varepsilon}$).
For simplicity, we assume that the magnetic permeability is piecewise-constant and given by
\[
\mu(x)=
\begin{cases}
    \mu_f, &\text{ if } x \in
    \Omega^\varepsilon_f,\\
    \mu_s, &\text{ if } x \in
    \Omega^\varepsilon_s,
  \end{cases}
\]
where $\mu_f,\,\mu_s >0$.

We
consider the following non-linear system modeling a suspension of
rigid inclusions in a non-conducting carrier fluid:
\begin{subequations}
  \label{eq:ferro-eqn-sim}  
\begin{align}
\label{eq:575}
\rho_f \left[ \frac{\partial \uu^{\varepsilon}}{\partial t} + (\uu^{\varepsilon} \cdot \nabla) \uu^{\varepsilon}
  \right] - \Div  \causs^{\varepsilon}
  &= \rho_f \bg
  &&\text{ in }\Omega^{\varepsilon}_f,\\
  \label{eq:576}
  \Div \uu^{\varepsilon}
  &= 0
  &&\text{ in } \Omega^{\varepsilon}_f\\
  \label{eq:577}
  \DD(\uu^{\varepsilon})
  &= 0
  &&\text{ in }\Omega^{\varepsilon}_s,\\
  \label{eq:590}
  \Curl \emH^{\varepsilon}
  &= 0
  &&\text{ in }\Omega^{\varepsilon}_f,\\
  \label{eq:578}
  \frac{\partial \emB^{\varepsilon}}{\partial t} + \frac{1}{\nu_{es}} \Curl \Curl
  \emH^{\varepsilon}
  &= \Curl \left( \uu^{\varepsilon} \times \emB^{\varepsilon} \right)
  &&\text{ in } \Omega^{\varepsilon}_s,\\
  \label{eq:579}
  \Div \emB^{\varepsilon}
  &= 0
  &&\text{ in }\Omega,\\
  \label{eq:574}
  \emB^{\varepsilon}
  &= \mu \emH^{\varepsilon}
  &&\text{ in }\Omega.
\end{align}
\end{subequations}
Suppose that the induced electric field is negligible, then the
Lorentz force can be written as:
\begin{align}
\label{eq:572}
  \ff^{\magn} = \Curl \emH^{\varepsilon} \times \emB^{\varepsilon}=
  \begin{cases}
    0, &\text{ in }\Omega^{\varepsilon}_f,\\
    \Curl \emH^{\varepsilon} \times \emB^{\varepsilon}, &\text{ in }\Omega^{\varepsilon}_s.
  \end{cases}
\end{align}
Thus by the Law of Inertia \cite[Axiom 5.2, page
171]{gonzalezFirstCourseContinuum2008a}, we obtain the balance
equations of force and torque: 
\begin{subequations}
  \label{eq:ferro-balance}  
  \begin{align}
\label{eq:581}
  \int_{Y_{i,s}} \rho_s \dot{\uu^{\varepsilon}} \di x
    &= \int_{\Gamma_i}
      \causs^{\varepsilon}(\uu^{\varepsilon},p^{\varepsilon})\nn\di \Gamma + \int_{Y_{i,s}}
    \Curl \emH^{\varepsilon} \times \emB^{\varepsilon} \di x + \int_{Y_{i,s}} \rho_s \bg \di x,\\
  \int_{Y_{i,s}} \rho_s \left( x - {\GG_i} \right)\times \dot{\uu^{\varepsilon}} \di x
    &= \int_{\Gamma_i} \left( x - {\GG_i} \right)\times
   \causs^{\varepsilon}(\uu^{\varepsilon},p^{\varepsilon})\nn \di \Gamma \nonumber\\
  &\qquad{} +{} \int_{Y_{i,s}}
    \left( x - {\GG_i} \right)\times (\Curl \emH^{\varepsilon} \times \emB^{\varepsilon}) \di
    x \nonumber\\
  &\qquad {}+{}\int_{Y_{i,s}} \rho_s \left( x - {\GG_i} \right)
    \times \bg \di x,
\end{align}
\end{subequations}
where $\dot{\uu^\varepsilon} \coloneqq \frac{\partial \uu^{\varepsilon}}{\partial t} + \left(\uu^\varepsilon \cdot \nabla\right)\uu^\varepsilon$ is the convective derivative, and $\GG_i$ is the center of mass of the particle $Y_{i,s}$.
The (outer) boundary conditions on the external boundary $\partial \Omega$ are
\begin{align}
\label{eq:583}
\uu^{\varepsilon} = 0, \quad \Curl \emH^{\varepsilon} \times \nn = 0, \quad \emB^{\varepsilon} \cdot \nn = q,
\end{align}
where 
$\displaystyle  \causs^{\varepsilon} (\uu^{\varepsilon}, p^{\varepsilon}) \coloneqq 2
  \eta \DD(\uu^{\varepsilon}) - p^{\varepsilon} I$, $\displaystyle   
   \DD(\uu^{\varepsilon})
   \coloneqq \frac{\nabla \uu^\varepsilon + \left(\nabla \uu^\varepsilon \right)^\top}{2}$, 
and $q \in H^{1/2}(\partial \Omega)$ satisfying the compatibility condition $\int_{\partial \Omega} q \di \hmeas = 0.$
Since $\mu$ is piecewise-constant, from now on, we write: 
\begin{align*}
\emH^{\varepsilon} =
  \begin{cases}
\displaystyle     \frac{\emB^{\varepsilon}}{\mu_f}, &\text{ if } x \in
    \Omega^\varepsilon_f,\\[12pt]
\displaystyle      \frac{\emB^{\varepsilon}}{\mu_s}, &\text{ if } x \in
    \Omega^\varepsilon_s.
  \end{cases}
\end{align*}

\subsection{Dimensional analysis}
\label{sec:dimensional-analysis}

Let $L, U, B$, and $\mu_s$ be the characteristic scales corresponding to
length, velocity, magnetic field and magnetic permeability,
respectively.  The characteristic time $T$ and body density force $F$
are defined by $T = \frac{L}{U}$ and $F = \frac{U^2}{L}.$

Let
$x^{*} \coloneqq \frac{x}{L}, \uu^{\varepsilon *} \coloneqq
\frac{\uu}{U}, p^{\varepsilon *}
\coloneqq \frac{p L}{\eta U}, \bg^{*} \coloneqq \frac{\bg L}{U^2}$ and
$\mu^{*} \coloneqq \frac{\mu}{\mu_s}$. The dimensionless quantities
that appear are the \emph{hydrodynamic Reynolds number}
$\nRe= \frac{\rho_f UL}{\eta}$, the \emph{magnetic Reynolds number}
$\nRm = \mu_s \nu_{es} UL$, the \emph{Alfven number}
$\nAl = \frac{B^2L}{\eta \mu_s U}$, and the \emph{density ratio} which, for simplicity, is assumed to satisfy $
\frac{\rho_s}{\rho_f} =1$.  Also, in the sequel, we drop the star to lighten the notation. The
dimensionless versions of \eqref{eq:ferro-eqn-sim},
\eqref{eq:ferro-balance} and \eqref{eq:583} are:
\begin{subequations}
  \label{eq:ferro-eqn-sim-nd}  
\begin{align}
\label{eq:575-nd}
 \nRe \left[ \frac{\partial \uu^{\varepsilon}}{\partial t} +
  (\uu^{\varepsilon} \cdot \nabla) \uu^{\varepsilon}  \right]
   - \Div \causs^{\varepsilon} (\uu^{\varepsilon},p^{\varepsilon})
  &= \nRe \bg
  &&\text{ in }\Omega^{\varepsilon}_f,\\
  \label{eq:576-nd}
  \Div \uu^{\varepsilon}
  &= 0
  &&\text{ in } \Omega^{\varepsilon}_f\\
  \label{eq:577-nd}
  \DD(\uu^{\varepsilon})
  &= 0
  &&\text{ in }\Omega^{\varepsilon}_s,\\
  \label{eq:590-nd}
  \Curl \emB^{\varepsilon}
  &= 0
  &&\text{ in }\Omega^{\varepsilon}_f,\\
  \label{eq:578-nd}
  \frac{\partial \emB^{\varepsilon}}{\partial t} + \frac{1}{\nRm} \Curl \Curl
  \emB^{\varepsilon}
  &= \Curl \left( \uu^{\varepsilon} \times \emB^{\varepsilon} \right)
  &&\text{ in } \Omega^{\varepsilon}_s,\\
  \label{eq:579-nd}
  \Div \emB^{\varepsilon}
  &= 0
  &&\text{ in }\Omega.
\end{align}
\end{subequations}
equipped with the balance equations:
\begin{subequations}
  \label{eq:ferro-balance-nd}  
  \begin{align}
\label{eq:581-nd}
  \nRe \int_{Y^{\varepsilon}_{i,s}}  \dot{\uu^{\varepsilon}} \di x
    &= \int_{\Gamma^{\varepsilon}_i} 
      \causs^{\varepsilon}(\uu^{\varepsilon},p^{\varepsilon})\nn\di
      \hmeas \nonumber\\
  &\qquad{}+{} \nAl \int_{Y^{\varepsilon}_{i,s}}
    \Curl \emB^{\varepsilon} \times \emB^{\varepsilon} \di x +  \nRe\int_{Y^{\varepsilon}_{i,s}} \bg \di x,\\
 \nRe \int_{Y^{\varepsilon}_{i,s}} \left( x - {\GG^{\varepsilon}_i} \right)\times \dot{\uu^{\varepsilon}} \di x
    &= \int_{\Gamma^{\varepsilon}_i} \left( x - {\GG^{\varepsilon}_i} \right)\times
    \causs^{\varepsilon}(\uu^{\varepsilon},p^{\varepsilon}) \nn \di \hmeas  +  \nonumber\\
  &\qquad{}+{} \nAl \int_{Y^{\varepsilon}_{i,s}}
    \left( x - {\GG^{\varepsilon}_i} \right)\times (\Curl \emB^{\varepsilon} \times \emB^{\varepsilon}) \di
    x\nonumber\\
  &\qquad{} +{} \nRe\int_{Y^{\varepsilon}_{i,s}} \left( x - {\GG^{\varepsilon}_i} \right) \times \bg \di x,
\end{align}
\end{subequations}
 and the boundary conditions: 
\begin{align}
\label{eq:583-nd}
\uu^{\varepsilon} = 0, \quad \Curl \emB^{\varepsilon} \times \nn = 0, \quad \emB^{\varepsilon} \cdot \nn = q,
\end{align}
where now 
$\displaystyle  
  \causs^{\varepsilon }(\uu^{\varepsilon},p^{\varepsilon})
  \coloneqq 2\DD (\uu^{\varepsilon}) -
     p^{\varepsilon } \II 
$.

Hereafter, we consider the stationary flow, i.e.,
the time derivative is ignored:
\begin{subequations}
  \label{eq:5}  
\begin{align}
\label{eq:6}
  \nRe (\uu^{\varepsilon} \cdot \nabla) \uu^{\varepsilon} -\Div
  \causs^{\varepsilon} (\uu^{\varepsilon},p^{\varepsilon})
  &=\nRe\bg
  && \text{ in }\Omega^{\varepsilon}_f,\\
  \label{eq:7}
  \Div \uu^{\varepsilon}
  &= 0
  && \text{ in } \Omega^{\varepsilon}_f,\\
  \label{eq:8}
  \DD (\uu^{\varepsilon})
  &= 0
  && \text{ in } \Omega_s^{\varepsilon},\\
  \label{eq:9}
  \Curl \emB^{\varepsilon}
  &= 0
  && \text{ in } \Omega_f^{\varepsilon},\\
  \label{eq:10}
  \frac{1}{\nRm} \Curl \Curl \emB^{\varepsilon}
  - \Curl \left( \uu^{\varepsilon}\times \emB^{\varepsilon} \right)
  &= \hh
  && \text{ in } \Omega^{\varepsilon}_s,\\
  \label{eq:11}
  \Div \emB^{\varepsilon}
  &= 0
  && \text{ in } \Omega.
\end{align} 
\end{subequations}
equipped with the balance equations:
\begin{subequations}
  \label{eq:13}  
\begin{align}
\label{eq:14}
 \nRe\int_{Y^{\varepsilon}_{i,s}}(\uu^{\varepsilon} \cdot \nabla) \uu^{\varepsilon}
  &=\int_{\Gamma^{\varepsilon}_i}  \causs^{\varepsilon}\nn \di \hmeas \nonumber\\
  &\qquad{}+{} \nAl
  \int_{Y^{\varepsilon}_{i,s}} \Curl \emB^{\varepsilon}\times
  \emB^{\varepsilon} \di x + \nRe \int_{Y^{\varepsilon}_{i,s}} \bg
    \di x,\\
  \nRe \int_{Y^{\varepsilon}_{i,s}}(x-\GG^{\varepsilon}_i) \times (\uu^{\varepsilon} \cdot \nabla) \uu^{\varepsilon}
  &= \int_{\Gamma^{\varepsilon}_i} \left( x - \GG^{\varepsilon}_i
    \right) \times  \causs^{\varepsilon} \nn \di \hmeas
   \nonumber\\
  &\qquad{}+{} \nAl \int_{Y^{\varepsilon}_{i,s}} \left( x - \GG_i^{\varepsilon}
    \right) \times \left( \Curl \emB^{\varepsilon} \times
    \emB^{\varepsilon} \right) \di x \nonumber\\
  &\qquad{}+{} \nRe
    \int_{Y^{\varepsilon}_{i,s}} (x - \GG^{\varepsilon}_i) \times \bg
    \di x,
\end{align}
\end{subequations}
the boundary conditions
\begin{align}
\label{eq:583s}
\uu^{\varepsilon} = 0, \quad \Curl \emB^{\varepsilon} \times \nn = 0, \quad \emB^{\varepsilon} \cdot \nn = 0
\end{align}
and the compatibility condition 
\begin{align}
\label{eq:596}
\int_{\Omega_s}\hh \cdot \nabla \psi \di x  = 0 \text{ for all }
  \nabla \psi
  \in H^1_n(\Omega,\RR^d).
\end{align}
We note that the function $\hh \in L^2(\Omega, \RR^d)$ appears in \eqref{eq:10} due to a
{\it lifting} of the non-homogeneous magnetic condition \eqref{eq:583} to the homogeneous condition \eqref{eq:583s}, i.e. substracting $\emB^\varepsilon$ by a suitable function, see \cite{gunzburgerExistenceUniquenessFinite1991}, and \cite[Section 3.8]{gerbeauMathematicalMethodsMagnetohydrodynamics2006}. Here, $H^1_n(\Omega,\RR^d)$ is
the set of weakly differentiable functions from $\Omega$ to $\RR^d$
with vanishing normal trace, see \cref{sec:abstr-fram} below.

\subsection{Useful results from functional analysis}
\label{sec:usef-exist-conv}

In this section, we collect some background results from functional analysis used
in the sequel. We separate the functional spaces and theorems of the two-scale convergence method from the ones of saddle point problems
to make it easier to keep track.

\subsubsection{Abstract framework for our non-linear problem}
\label{sec:abstr-fram}
The results for linear saddle point problems date back to the seminal
works by I. Babu\v{s}ka and F. Brezzi,
c.f. \cite{babuskaFiniteElementMethod1972,boffiMixedFiniteElement2013}. They
are then adapted to the non-linear cases such as the Navier-Stokes
equations and magnetorhydrodynamic equations,
c.f. \cite{giraultFiniteElementMethods2012,schotzauMixedFiniteElement2004,gunzburgerExistenceUniquenessFinite1991,gerbeauMathematicalMethodsMagnetohydrodynamics2006,guermondMixedFiniteElement2003}.
We summarize here the results used in our paper and refer the readers
to the works cited above for their proofs.

Let $X$ and $P$ be two real Hilbert spaces, and $f \in X$.
Let $a(\,\cdot\,;\cdot,\cdot)\colon X \times X \times X \to \RR$ be a
non-linear form such that for any $w \in X$, $a(w;\cdot,\cdot)$ is a
bilinear continuous form on $X \times X$. Let $b\colon X \times P \to \RR$ be
a continuous bilinear form.
Consider the following
non-linear problem:

\emph{Find $(u,p) \in X \times P$, such that for
  all $(v,q) \in X \times P$,}
\begin{subequations}
  \label{eq:33}
\begin{align}
  \label{eq:31}
  a(u;u,v) + b(v,p) &= \left\langle f,v \right\rangle,\\
  \label{eq:32}
    b(u,q) &= 0,
\end{align}
\end{subequations}
where $\langle \cdot,\cdot \rangle$ is the dual pairing.
The unknown $p$ can be regarded as the Lagrange multiplier associated
with the constraint \eqref{eq:32}. The idea is to embed the constraint
\eqref{eq:32} into $X$ by introducing the space
\begin{align*}
M = \left\{ u \in X \colon b(u,q) = 0 \text{ for all } q \in P \right\},
\end{align*}
and consider a simpler problem that reads: \emph{Find $u \in M$ such that for
all $v \in M,$}
\begin{align}
\label{eq:598}
    a(u;u,v) 
    = \left\langle f,v \right\rangle.
\end{align}
The continuity of $b$ implies that $M$ is a closed linear subspace of
$X$, and, thus, $M$ is also a Hilbert space.

\begin{theorem}[Existence and uniqueness of solution of
  \eqref{eq:598}
]
\label{sec:stat-ferr-equat-3}
If the following conditions hold:
\begin{enumerate}
\item[(i)] there exists $\alpha > 0 $ such that for all $v \in M$, 
\begin{align}
\label{eq:599}
a(v;v,v) \ge \alpha \norm{v}^2_X;
\end{align}
\item[(ii)] the space $M$ is separable and such that for any sequence $v_n$ that
  weakly converges to $v$ in $M$, $a(v_n;v_n,w)$ converges to
  $a(v;v,w)$, for  all $w\in M$;
\end{enumerate}
then there exists at least one solution of problem
\eqref{eq:598}: $u\in M$. If in addition, we assume that:
\begin{enumerate}
\item[(iii)] the elliptic property (i) holds uniformly with respect to
  the first variable, i.e. there exists $\alpha > 0$ such that for all
  $v,w \in M$, 
\begin{align}
\label{eq:600}
a(w;v,v) \ge \alpha \norm{v}^2_X,
\end{align}
\item[(iv)] there exists a constant $\gamma > 0$ such that, for all
  $u_1, u_2, v, w \in M$,
\begin{align}
\label{eq:601}
  \abs{a(u_2;v,w) - a(u_1;v,w)}
  \le \gamma \norm{u_2 - u_1}_X \norm{v}_X \norm{w}_X,
\end{align}
\end{enumerate}
then problem \eqref{eq:598} has a unique solution $u \in M$, provided
that 
\begin{align}
\label{eq:602}
\frac{\gamma \norm{w}_M}{\alpha^2}<1,
\end{align}
where $w\in M$ is such that $\langle f|_M, v\rangle = ( w, v )_X, ~ \forall v \in M$, with $f|_M$ being the restriction of $f$ on $M$.
\end{theorem}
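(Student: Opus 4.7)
The plan is to prove existence by a Galerkin approximation combined with a Brouwer fixed point argument, and to derive uniqueness directly from the uniform coercivity (iii), the Lipschitz bound (iv), and the smallness condition \eqref{eq:602}.

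For existence, since $M$ is separable, I would choose an increasing sequence of finite-dimensional subspaces $\{M_n\}_{n \ge 1}$ whose union is dense in $M$, and seek $u_n \in M_n$ satisfying $a(u_n; u_n, v) = \langle f|_M, v \rangle$ for every $v \in M_n$. In each $M_n$, existence of such a $u_n$ follows from the standard corollary of Brouwer's fixed point theorem applied to the continuous map $T_n\colon M_n \to M_n$ defined by $(T_n(u), v)_X \coloneqq a(u; u, v) - \langle f|_M, v \rangle$, after observing that $(T_n(u), u)_X \ge \norm{u}_X\bigl(\alpha \norm{u}_X - \norm{w}_M\bigr)$ is strictly positive on every sphere of radius exceeding $\norm{w}_M/\alpha$. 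Testing the Galerkin identity against $u_n$ and invoking (i) yields the uniform a priori bound $\norm{u_n}_X \le \norm{w}_M/\alpha$. Extracting a weakly convergent subsequence $u_n \rightharpoonup u$ in $M$ and passing to the limit via hypothesis (ii) then produces a solution of \eqref{eq:598} first on the dense subspace $\bigcup_n M_n$ and, by continuity of $a(u;u,\cdot)$, on all of $M$.

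For uniqueness, assume $u_1, u_2 \in M$ both solve \eqref{eq:598} and set $v \coloneqq u_1 - u_2$. Subtracting the two equations and splitting the nonlinearity as
\begin{align*}
0 = a(u_1; u_1, v) - a(u_2; u_2, v)
= a(u_1; v, v) + \bigl[a(u_1; u_2, v) - a(u_2; u_2, v)\bigr],
\end{align*}
the uniform coercivity (iii) gives $a(u_1; v, v) \ge \alpha \norm{v}_X^2$, while the Lipschitz estimate (iv) controls the bracketed term by $\gamma \norm{u_2}_X \norm{v}_X^2$. Using the a priori bound $\norm{u_2}_X \le \norm{w}_M/\alpha$ (obtained exactly as in the existence step by testing \eqref{eq:598} against $u_2$ and applying (iii)), one arrives at $\alpha \norm{v}_X^2 \le (\gamma \norm{w}_M / \alpha)\,\norm{v}_X^2$, which contradicts \eqref{eq:602} unless $v = 0$.

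I expect the principal technical obstacle to be the passage to the limit in the Galerkin scheme: identifying the weak limit of the nonlinear term $a(u_n; u_n, v)$ is precisely what hypothesis (ii) encodes, and verifying this hypothesis in the concrete setting of the magnetohydrodynamic system \eqref{eq:575-nd}--\eqref{eq:579-nd} is where compactness arguments, and ultimately the two-scale corrector results alluded to in the introduction, come into play. Apart from this, the uniform bound $\norm{u_n}_X \le \norm{w}_M/\alpha$ is the single quantitative estimate that simultaneously enables the fixed point construction, the weak compactness needed as $n \to \infty$, and the uniqueness contradiction.
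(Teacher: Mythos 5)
Your proof is correct. The paper itself does not supply a proof of this theorem --- it is stated as a summary of classical results, with the proof deferred to the cited literature (Girault--Raviart, Gerbeau--Le Bris--Leli\`evre, Gunzburger et al.) --- and your Galerkin--Brouwer construction with the a priori bound $\norm{u_n}_X \le \norm{w}_M/\alpha$, followed by the uniqueness argument via the decomposition $a(u_1;u_1,v)-a(u_2;u_2,v)=a(u_1;v,v)+[a(u_1;u_2,v)-a(u_2;u_2,v)]$ and the smallness condition \eqref{eq:602}, is precisely the standard argument those references use. One small remark: hypothesis (ii) does double duty in your scheme, since in the finite-dimensional spaces $M_n$ weak convergence coincides with strong convergence, so (ii) also supplies the continuity of $T_n$ required to apply the corollary of Brouwer's theorem; it is worth saying this explicitly, as otherwise the continuity of $u\mapsto a(u;u,v)$ on $M_n$ would need a separate justification.
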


 \cref{sec:stat-ferr-equat-3} allows us to establish the
 existence and uniqueness of the solution $u$ of \eqref{eq:598}. To recover the unknown $p$ that solves \eqref{eq:33}, we need to introduce the following definition.
\begin{definition}
\label{sec:abstr-fram-our}
The following is called the inf-sup condition or the
Babu\v{s}ka-Brezzi condition or the Ladyzhenskaya-Babu\v{s}ka-Brezzi
condition: 
\begin{align}
\label{eq:35}
  \exists \, \beta > 0 \quad \text{ such that } \quad
  \inf_{q \in P\setminus \left\{ 0 \right\}}
  \sup_{v \in X\setminus \left\{ 0 \right\}} \frac{b(v,q)}{\norm{v}_X
  \norm{q}_P}\ge \beta.
\end{align}
\end{definition}

If the bilinear form $b$ in \eqref{eq:33} satisfies the inf-sup
condition \eqref{eq:35} then, by the Riesz Representation Theorem and
the Closed Range Theorem \cite{brezisFunctionalAnalysisSobolev2011}, the
existence and uniqueness of the solution $u$ of \eqref{eq:598} implies the
existence and uniqueness of the solution $(u,p)$ of \eqref{eq:33}. 

The inf-sup condition can be verified by
\begin{proposition}
\label{sec:check-infsup}
Let $B\colon X \to P$ be the continuous linear operator associated to
the continuous bilinear form $b$ by $( Bv,q )_P =
b(v,q)$ for all $(v,q) \in X \times P$ (here we use the Riesz
Representation Theorem). Then the following statements are equivalent: 
\begin{enumerate}
\item[(i)] The inf-sup condition \eqref{eq:35} holds.
\item[(ii)] $B^\top \colon P \to X$ is injective and $B^\top$ has a closed
  range. Here $B^\top$ is the transpose of $B$, i.e. $( v, B^\top q )_X = ( Bv, q)_P$ for all $(v,q) \in X \times P.$
\item[(iii)] $B \colon X \to P$ is surjective. 
\end{enumerate}
\end{proposition}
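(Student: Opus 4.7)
The plan is to translate the inf-sup condition into an operator statement about $B^\top$, and then invoke the classical Closed Range Theorem to get the equivalence with surjectivity of $B$. The cornerstone is the identity $b(v,q)=(Bv,q)_P=(v,B^\top q)_X$, which lets us compute the inner supremum in \eqref{eq:35} by Riesz: for each fixed $q\in P$,
\begin{equation*}
\sup_{v\in X\setminus\{0\}}\frac{b(v,q)}{\norm{v}_X}
=\sup_{v\in X\setminus\{0\}}\frac{(v,B^\top q)_X}{\norm{v}_X}
=\norm{B^\top q}_X.
\end{equation*}
So the inf-sup condition \eqref{eq:35} is equivalent to the coercivity estimate $\norm{B^\top q}_X\ge\beta\norm{q}_P$ for all $q\in P$.

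For (i)$\Leftrightarrow$(ii), I would observe that a bounded linear operator between Hilbert spaces is bounded below if and only if it is injective with closed range. The forward direction is immediate ($B^\top q=0$ forces $q=0$ by the bound, and a Cauchy sequence in the range pulls back to a Cauchy sequence in $P$). The converse uses the Open Mapping Theorem applied to the bijective operator $B^\top\colon P\to \ran(B^\top)$, where the target is Banach because it is a closed subspace of $X$.

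For (ii)$\Leftrightarrow$(iii), I would apply the Closed Range Theorem of Banach, which states that $\ran(B)$ is closed in $P$ if and only if $\ran(B^\top)$ is closed in $X$, together with the Hilbert-space orthogonality relations $\overline{\ran(B)}=\ker(B^\top)^\perp$ and $\overline{\ran(B^\top)}=\ker(B)^\perp$. Assuming (ii), we get $\ran(B)$ closed and $\ker(B^\top)=\{0\}$, so $\ran(B)=\ker(B^\top)^\perp=P$, giving (iii). Conversely, if $B$ is surjective then $\ran(B)=P$ is closed and $\ker(B^\top)=\ran(B)^\perp=\{0\}$, so $B^\top$ is injective with closed range by the Closed Range Theorem.

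I do not anticipate a genuine obstacle: the proof is a clean functional-analytic argument, and all three equivalences flow from Riesz representation plus the Closed Range Theorem. The only point requiring a little care is the passage from the supremum expression in \eqref{eq:35} to the norm of $B^\top q$, making sure the Hilbert-space identification used in defining $B$ and $B^\top$ is applied consistently, and the explicit use of the orthogonal decomposition $X=\ker(B)\oplus\overline{\ran(B^\top)}$ and its dual version in $P$ to convert closed range statements into surjectivity.
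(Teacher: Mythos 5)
The paper states \cref{sec:check-infsup} as a standard background fact from Babu\v{s}ka--Brezzi theory and gives no proof of it, instead pointing to the Riesz Representation Theorem and the Closed Range Theorem in the surrounding text and to the cited references. Your proof is correct and is exactly the standard argument those sources would give: the inner supremum in \eqref{eq:35} is identified with $\norm{B^\top q}_X$ by Riesz, so the inf-sup condition is the lower bound $\norm{B^\top q}_X\ge\beta\norm{q}_P$; being bounded below is equivalent to ``injective with closed range'' (Open Mapping Theorem for the converse direction); and the passage between closedness of $\ran(B^\top)$ together with $\ker(B^\top)=\{0\}$ and surjectivity of $B$ follows from the Closed Range Theorem and the Hilbert-space identity $\overline{\ran(B)}=\ker(B^\top)^\perp$. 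There is nothing to correct.
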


\subsubsection{The two-scale convergence method}
\label{sec:two-scale-conv}
Two-scale convergence was invented by G. Nguetseng and
further developed by G. Allaire.  We collect here the important notions
and results relevant to this paper, whose proofs can be found in \cite{bourgeatStochasticTwoscaleConvergence1994,zhikovHomogenizationRandomSingular2006,heidaExtensionStochasticTwoscale2011,heidaStochasticTwoscaleConvergence2021}. The following spaces are used in the paper below.
\begin{itemize}[wide]
    \item $C_{\per}(Y)$ -- the subspace of $C(\RR^d)$ of $Y$-periodic functions;
    \item $C^{\infty}_{\per}(Y)$ -- the subspace of $C^{\infty}(\RR^d)$ of $Y$-periodic functions;
    \item $H^1_{\per}(Y)$ -- the closure of $C^{\infty}_{\per}(Y)$ in the $H^1$-norm;
    \item
    
    $\mathcal{D}(\Omega, X)$ -- where $X$ is a Banach space -- the space  infinitely differentiable functions from $\Omega$ to $X$, whose  support is a compact set of $\mathbb{R}^d$ contained in $\Omega$.

    \item $L^p(\Omega, X)$ -- where $X$ is a Banach space and $1 \le p \le \infty$ -- the space of measurable functions $w \colon x \in \Omega \mapsto w(x) \in X$ such that
    $
    \norm{w}_{L^p(\Omega, X)}
    \coloneqq \left(\int_{\Omega} \norm{w(x)}^p_{X} \di x\right)^\frac{1}{p} < \infty.
    $

    \item $L^p_{\per}\left(Y, C(\bar{\Omega})\right)$ -- the space of measurable functions $w \colon y \in Y \mapsto w(\cdot,y) \in C(\bar{\Omega})$, such that 
    $w$ is periodic with respect to $y$ and
    $
    \int_{Y} \left(\sup_{x \in \bar{\Omega}} \abs{w(x,y)}\right)^p \di y 
    < \infty.
    $
    
\end{itemize}

\begin{definition}[$L^p-$admissible test function]
\label{sec:two-scale-conv-1}
Let $1 \le p < + \infty$. A function $\psi \in
L^p(\Omega \times Y)$,  $Y$-periodic in the second component, is called an $L^p-$admissible test function if for
all $\varepsilon > 0$,
$\psi \left( \cdot, \frac{\cdot}{\varepsilon} \right)$ is measurable and
\begin{align}
\label{eq:34}
\lim_{\varepsilon \to 0} \int_{\Omega} \abs{\psi \left( x,
  \frac{x}{\varepsilon} \right)}^p \di x = \frac{1}{\abs{Y}}
  \int_{\Omega} \int_Y \abs{\psi (x,y)}^p \di y \di x.
\end{align}
\end{definition}

It is known that functions belonging to the spaces $\mathcal{D} \left(\Omega,
  C_\per^\infty (Y)\right)$, $C \left( \bar{\Omega}, C_{\per}(Y)
\right)$, $L^p_{\per}\left( Y,  C(\bar{\Omega})\right)$ or $L^p\left(\Omega, C_{\per}(Y)\right)$ are admissible \cite{allaireHomogenizationTwoscaleConvergence1992}, but the precise
characterization of those admissible test functions is still an
open question.
\begin{definition}
A sequence $\{ v^\varepsilon \}_{\varepsilon>0}$ in $L^2(\Omega)$ is said to \emph{two-scale converge} to $v = v(x,y)$, with $v \in L^2 (\Omega \times Y)$, and we write $v^\varepsilon \tscale v$, if and only if:
\begin{align}
\label{eq:2sc}
    \lim_{\varepsilon \to 0} \int_\Omega v^\varepsilon(x) \psi \left( x, \frac{x}{\varepsilon}\right) \di x 
    = \frac{1}{\abs{Y}} \int_\Omega \int_Y v(x,y) \psi(x,y) \di y \di x,
\end{align}
for any test function $\psi = \psi (x, y)$ with $\psi \in
\mathcal{D} \left(\Omega, C_\per^\infty (Y)\right)$.
\end{definition}
In \eqref{eq:2sc}, we can choose $\psi$ be any ($L^2-$)admissible test
function. Any bounded sequence $v^{\varepsilon}\in L^2(\Omega)$ has
a subsequence that two-scale converges to a limit $v^0 \in L^2(\Omega
\times Y)$. Moreover, from \cite[Theorem 1.8, Remark 1.10 and
Corollary 5.4]{allaireHomogenizationTwoscaleConvergence1992}, we have 
\begin{theorem}[Corrector result]
\label{sec:two-scale-corrector}
Let $u^{\varepsilon}$ be a sequence of functions in $L^2(\Omega)$ that
two-scale converges to a limit $u^0 (x,y) \in L^2(\Omega\times
Y)$. Assume that 
\begin{align}
\label{eq:12}
\lim_{\varepsilon \to 0} \norm{u^{\varepsilon}}_{L^2(\Omega)} =
  \norm{ 
  u^0
  }_{L^2(\Omega\times Y)}.
\end{align}
Then for any sequence $v^{\varepsilon}$ in $L^2(\Omega)$ that
two-scale converges to $v^0 \in L^2(\Omega \times Y),$ one has 
\begin{align}
\label{eq:36}
u^{\varepsilon} v^{\varepsilon} \wcv \frac{1}{\abs{Y}} \int_Y u^0(x,y)
  v^0(x,y) \di x \di y \text{ in } \calD '(\Omega).
\end{align}
Furthermore, if $u^0(x,y)$ 
belongs to $L^2 \left( \Omega, C_{\per}(Y) \right)$ or $L^2_{\per} \left( Y, C(\Omega) \right)$,
then 
\begin{align}
\label{eq:37}
\lim_{\varepsilon \to 0}  \norm{u^{\varepsilon}(x) - u^0 \left( x,
  \frac{x}{\varepsilon} \right)}_{L^2(\Omega)} = 0.
\end{align}
\end{theorem}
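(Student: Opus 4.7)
The plan is to exploit the norm equality $\lim_\varepsilon \norm{u^\varepsilon}_{L^2(\Omega)} = \norm{u^0}_{L^2(\Omega \times Y)}$, which upgrades the two-scale convergence $u^\varepsilon \tscale u^0$ to what is typically called \emph{strong} two-scale convergence. The core step is an approximation lemma: by density of $\calD(\Omega, C^\infty_\per(Y))$ in $L^2(\Omega \times Y)$, I would pick $u^0_\delta \in \calD(\Omega, C^\infty_\per(Y))$ converging to $u^0$ in $L^2(\Omega \times Y)$ as $\delta \to 0$, and set $u^\varepsilon_\delta(x) \coloneqq u^0_\delta(x, x/\varepsilon)$. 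Since $u^0_\delta$ is admissible, $\norm{u^\varepsilon_\delta}^2_{L^2(\Omega)} \to \frac{1}{\abs{Y}} \norm{u^0_\delta}^2_{L^2(\Omega \times Y)}$, and using $u^0_\delta$ as a test function for the two-scale limit of $u^\varepsilon$ yields $\int_\Omega u^\varepsilon u^\varepsilon_\delta \di x \to \frac{1}{\abs{Y}}\int_\Omega \int_Y u^0 u^0_\delta \di y \di x$. Combined with the norm hypothesis, expanding the square $\norm{u^\varepsilon - u^\varepsilon_\delta}^2_{L^2(\Omega)}$ gives
\begin{align*}
\lim_\varepsilon \norm{u^\varepsilon - u^\varepsilon_\delta}^2_{L^2(\Omega)} = \frac{1}{\abs{Y}} \norm{u^0 - u^0_\delta}^2_{L^2(\Omega\times Y)} \xrightarrow[\delta \to 0]{} 0.
\end{align*}

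For the first conclusion \eqref{eq:36}, I would take $\varphi \in \calD(\Omega)$ and decompose
\begin{align*}
\int_\Omega u^\varepsilon v^\varepsilon \varphi \di x = \int_\Omega (u^\varepsilon - u^\varepsilon_\delta)\, v^\varepsilon \varphi \di x + \int_\Omega u^\varepsilon_\delta\, v^\varepsilon \varphi \di x.
\end{align*}
The first term is controlled by Cauchy--Schwarz, using that $v^\varepsilon$ is bounded in $L^2(\Omega)$ (any two-scale convergent sequence is). The second, since $u^0_\delta(x,y)\varphi(x) \in \calD(\Omega, C^\infty_\per(Y))$ is again an admissible test function, converges as $\varepsilon \to 0$ to $\frac{1}{\abs{Y}}\int_\Omega \int_Y u^0_\delta v^0 \varphi \di y \di x$ by the two-scale convergence of $v^\varepsilon$. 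Sending $\delta \to 0$ afterwards yields \eqref{eq:36} via a standard $\varepsilon/\delta$ argument.

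For the strong corrector \eqref{eq:37}, admissibility of $u^0$ itself (when $u^0 \in L^2(\Omega, C_\per(Y))$ or $L^2_\per(Y, C(\bar{\Omega}))$) means that $u^0(x, x/\varepsilon)$ is measurable with $\lim_\varepsilon \norm{u^0(\cdot, \cdot/\varepsilon)}^2_{L^2(\Omega)} = \frac{1}{\abs{Y}}\norm{u^0}^2_{L^2(\Omega \times Y)}$, and $u^0$ itself is a legitimate test function against the two-scale limit of $u^\varepsilon$. Expanding
\begin{align*}
\norm{u^\varepsilon - u^0(\cdot, \cdot/\varepsilon)}^2_{L^2(\Omega)} = \norm{u^\varepsilon}^2_{L^2(\Omega)} - 2\int_\Omega u^\varepsilon(x)\, u^0(x, x/\varepsilon) \di x + \norm{u^0(\cdot, \cdot/\varepsilon)}^2_{L^2(\Omega)},
\end{align*}
the three limits cancel exactly, using the norm hypothesis for the first term, admissibility of $u^0$ for the third, and the two-scale definition \eqref{eq:2sc} for the cross term. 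The main obstacle is not any single computation but the calibration of the approximation step: one has to know a priori that $\calD(\Omega, C^\infty_\per(Y))$ is both dense in $L^2(\Omega \times Y)$ \emph{and} sits inside the class of admissible test functions, so that the same $u^0_\delta$ can be used simultaneously against $u^\varepsilon$ (to identify the limit of the cross term) and against itself (to identify the limit of its $L^2$ norm). Once this is secured, the argument is purely a three-term expansion combined with the definition of two-scale convergence.
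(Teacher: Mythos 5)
The paper does not supply its own proof of this theorem: it is quoted verbatim as a known result from Allaire (Theorem~1.8, Remark~1.10, Corollary~5.4 of \cite{allaireHomogenizationTwoscaleConvergence1992}). Your argument reproduces exactly the strategy of Allaire's original proof, and it is correct: approximate $u^0$ by $u^0_\delta\in\calD(\Omega,C^\infty_\per(Y))$ in $L^2(\Omega\times Y)$; use the density and admissibility of that class to show $\lim_\varepsilon\norm{u^\varepsilon-u^0_\delta(\cdot,\cdot/\varepsilon)}_{L^2(\Omega)}^2=\frac{1}{\abs{Y}}\norm{u^0-u^0_\delta}_{L^2(\Omega\times Y)}^2$; then the three-term expansion handles \eqref{eq:37}, and the $\varepsilon$/$\delta$ diagonalization with a test function $\varphi\in\calD(\Omega)$ handles \eqref{eq:36}. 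The single potential pitfall you anticipated — needing $\calD(\Omega,C^\infty_\per(Y))$ to be both dense in $L^2(\Omega\times Y)$ and admissible — is indeed the crux, and you handled it properly.

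One small remark for cleanliness: the identity $\lim_\varepsilon\norm{u^\varepsilon-u^\varepsilon_\delta}_{L^2(\Omega)}^2=\frac{1}{\abs{Y}}\norm{u^0-u^0_\delta}_{L^2(\Omega\times Y)}^2$ obtained from your three-term expansion actually reads
\begin{align*}
\norm{u^0}_{L^2(\Omega\times Y)}^2-\frac{2}{\abs{Y}}\bigl(u^0,u^0_\delta\bigr)_{L^2(\Omega\times Y)}+\frac{1}{\abs{Y}}\norm{u^0_\delta}_{L^2(\Omega\times Y)}^2,
\end{align*}
because the hypothesis \eqref{eq:12} as stated in this paper carries no $\frac{1}{\abs{Y}}$ normalization on the $\norm{u^\varepsilon}_{L^2}$ side. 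This collapses to your claimed $\frac{1}{\abs{Y}}\norm{u^0-u^0_\delta}^2$ precisely because $Y=(0,1)^d$ has $\abs{Y}=1$ here (and the same cancellation is needed in the final expansion for \eqref{eq:37}). Your proof is therefore correct in the paper's setting; if one wanted it to be normalization-robust, one would state \eqref{eq:12} with the $\abs{Y}^{-1/2}$ factor as Allaire does.
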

In fact, the smoothness assumption on $u^0$ in \eqref{eq:37} is needed
only for $u^0 \left( x, \frac{x}{\varepsilon}
\right)$ is to be measurable and to belong to $L^2(\Omega)$. 
Finally, we recall that if $\psi \in L^2(\Omega \times Y)$ is a Carath\'{e}odory
function then $\psi \left( \cdot, \frac{\cdot}{\varepsilon} \right)$
is measurable. This fact is used later on to prove that $\mathds{1}_{\Omega \times Y_s}$ is an admissible test function.
\section{Main results}
\label{sec:main-result}

We now define the admissible spaces on the $C^{1,1}-$domain $\Omega$ for the fluid velocity $\uu^{\varepsilon}$, the magnetic field
$\emB^{\varepsilon}$ and the fluid pressure $p^{\varepsilon}$.
Let
\begin{align*}
     H^1_n(\Omega,\RR^d)
  &\coloneqq \left\{ \emC \in H^1(\Omega,\RR^d)\colon \emC \cdot
    \nn_{\partial \Omega} = 0 \right\},\\
  L_0^2(\Omega)
  &\coloneqq \left\{ q \in L^2(\Omega)\colon \int_{\Omega} q \di x =
    0\right\},\\
 \calH
  &\coloneqq \left\{
    \emC \in L^2(\Omega,\RR^d) \colon \Curl \emC \in
    L^2(\Omega,\RR^d), \Div \emC \in L^2(\Omega), \emC \cdot
    \nn_{\partial \Omega} = 0
    \right\},\\
  \calV^{\varepsilon}
  &\coloneqq \left\{ \vv \in H_0^1(\Omega,\RR^d)\colon \DD(\vv) = 0 \text{
  in } \Omega_s^{\varepsilon} \right\},\\
  \calP^{\varepsilon}
  &\coloneqq \Div(\calV^{\varepsilon})=\left\{ q \in L_0^2(\Omega)\colon \exists \vv \in
    \calV^{\varepsilon} \text{ such that } q = \Div \vv \right\},\\
  \calU^{\varepsilon}
    &\coloneqq \left\{ \vv \in
      H_0^1(\Omega,\RR^d)\colon \DD(\vv) = 0 \text{
        in } \Omega_s^{\varepsilon},~ \Div \vv = 0 \text{ in
      }\Omega^{\varepsilon}_f \right\},\\
        \calX^{\varepsilon}
  &\coloneqq \calV^{\varepsilon} \times \left\{ \emC \in
     H^1_n(\Omega,\RR^d)\colon \Curl \emC = 0 \text{ in }\Omega^{\varepsilon}_f \right\},\\
    \calY^{\varepsilon}
    &\coloneqq
    \calU^{\varepsilon}\times \left\{ \emC \in
      H^1_n(\Omega,\RR^d)\colon \Curl \emC = 0 \text{ in
      }\Omega^{\varepsilon}_f \right\}.
\end{align*}
These spaces are equipped with natural Sobolev norms. 
Moreover, given normed spaces $A$ and $B$, the norm of its product space $A \times B$ is defined by $\norm{(a,b)}^2_{A \times B} \coloneqq  \norm{a}_A^2 + \norm{b}_B^2 $ for $a \in A, b\in B.$

As we will see later, to utilize the framework presented in
\cref{sec:abstr-fram}, we choose $X = \calX^{\varepsilon}$, $M =
\calY^{\varepsilon}$, and $P = \calP^{\varepsilon}$.

In addition, let $\kappa_{GR}^{-1}$, be the norm of the embedding
$\calH \to H^1$, $\kappa_S$ the norm of the Sobolev embedding $H^1 \to L^4$, and $\kappa_K^{-1}$ the constant
in Korn's inequality, respectively.  Then the main result of this paper is summarized in
the following theorem.
\begin{theorem}
  \label{sec:main-results}
  Suppose the data $\bg$ and $\hh$ are small enough such that 
\begin{align}
\label{eq:79}
  \nRe \norm{\bg}_{L^2} + \nAl \norm{\hh}_{L^2} \le \frac{\left( \min
  \left\{ \frac{\nAl}{\nRm} \kappa_{GR},\kappa_K \right\}
  \right)^2}{\kappa_S \max \left\{ 1,2\nAl \right\}}.
\end{align}
Then, for $\varepsilon > 0$, the system \eqref{eq:5} has a unique
solution $\uu^{\varepsilon} \in H_0^1(\Omega,\RR^d),$
$p^{\varepsilon} \in L_0^2(\Omega)$,
$\emB^{\varepsilon} \in H_n^1(\Omega,\RR^d)$. Moreover, there exist a
constant, symmetric and elliptic fourth rank tensor $\calN$, and two
constant, symmetric and elliptic matrices $\calM, \calE$ such that
\begin{align}
\label{eq:80}
  \uu^{\varepsilon}
  \wcv \uu^0 \text{ in } H^1(\Omega,\RR^d),
  \quad
  \emB^{\varepsilon}
  \wcv \emB^0 \text{ in } H^1(\Omega,\RR^d),
  \quad p^{\varepsilon}
  \wcv \Pi \text{ in } L^2_0(\Omega)
\end{align}
where $\uu^0 \in H_0^1(\Omega,\RR^d)$, $\Pi \in L_0^2(\Omega)$, and  $\emB^0 \in H_n^1(\Omega,\RR^d)$ satisfy the following effective system of equations all defined on the domain $\Omega$,
\begin{align}
\label{eq:78}
\begin{split}
  \Div \uu^0 = \Div \emB^0
  &= 0,\\
  \left( \uu^0 \cdot \nabla \right)\uu^0 - \Div \left( 2
     \calN_{ijmn} \left[ \DD (\uu^0) \right]_{ij}\ee^m \otimes \ee^n -
     \Pi\, \II \right)
   &=\nRe \bg + \nAl \frac{\abs{Y_s}}{\abs{Y}}
   \Curl \emB^0 \times \emB^0,\\
   \frac{1}{\nRm} \Curl \left( \calM_{jn}\epsilon_{ijk} \frac{\partial
       B^0_i}{\partial x_k} \ee^n \right) - \Curl \left( \calE_{kn}
     \epsilon_{ijk} u^0_i B^0_j \ee^n \right)
   &= \frac{\abs{Y_s}}{\abs{Y}} \hh.
 \end{split}
\end{align}

\end{theorem}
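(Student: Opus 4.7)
The argument splits naturally into three stages: (a) well-posedness of the $\varepsilon$-problem in the saddle-point framework of \cref{sec:abstr-fram}, (b) extraction of two-scale limits together with the identification of correctors dictated by the rigidity constraint $\DD(\uu^{\varepsilon})=0$ in $\Omega_s^{\varepsilon}$ and the curl-free constraint $\Curl \emB^{\varepsilon}=0$ in $\Omega_f^{\varepsilon}$, and (c) passage to the limit in the nonlinear coupling terms via the corrector result \cref{sec:two-scale-corrector}.

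For step (a), I would cast \eqref{eq:5}--\eqref{eq:13} as problem \eqref{eq:33} with $X=\calX^{\varepsilon}$, $M=\calY^{\varepsilon}$, $P=\calP^{\varepsilon}$, and a joint bilinear form of the schematic shape
\begin{align*}
  a((\ww,\emC);(\uu,\emB),(\vv,\emD))
  &\coloneqq 2\int_{\Omega} \DD(\uu):\DD(\vv)\di x + \nRe \int_{\Omega} (\ww\cdot\nabla)\uu\cdot\vv\di x\\
  &\quad{}+{}\frac{\nAl}{\nRm}\int_{\Omega} \Curl \emB\cdot\Curl \emD \di x
   -\nAl\int_{\Omega} (\ww\times \emC)\cdot\Curl \emD\di x,\\
  b((\vv,\emD),q)&\coloneqq -\int_{\Omega} q\,\Div \vv\di x,
\end{align*}
so that the balance equations \eqref{eq:13} are automatically encoded, since admissible velocity tests in $\calV^{\varepsilon}$ are rigid motions on each $Y^{\varepsilon}_{i,s}$ and the boundary integrals on $\Gamma^{\varepsilon}_i$ reassemble into volumetric stresses. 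Coercivity (condition (iii) of \cref{sec:stat-ferr-equat-3}) follows from Korn's inequality on $\calU^{\varepsilon}$ (constant $\kappa_K$) combined with the Friedrichs-type control of $\emC\in\calH$ by $\Curl \emC$ and $\Div \emC$ (constant $\kappa_{GR}$); the bilinear coupling is skew when $\ww=\vv$ and $\emC=\emD$, so it does not spoil coercivity. The local Lipschitz property (iv) comes from the Sobolev embedding $H^1\hookrightarrow L^4$ with constant $\kappa_S$ applied to both the convective and Lorentz terms, and the smallness assumption \eqref{eq:79} is calibrated to give exactly \eqref{eq:602}, yielding a unique $(\uu^{\varepsilon},\emB^{\varepsilon})\in\calY^{\varepsilon}$. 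The pressure $p^{\varepsilon}\in L^2_0(\Omega)$ is then recovered via \cref{sec:check-infsup} using surjectivity of $\Div\colon\calV^{\varepsilon}\to\calP^{\varepsilon}$.

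For step (b), the same bounds are uniform in $\varepsilon$, so up to subsequences $\uu^{\varepsilon}\wcv\uu^0$ and $\emB^{\varepsilon}\wcv\emB^0$ weakly in $H^1(\Omega,\RR^d)$, and $p^{\varepsilon}\wcv\Pi$ in $L^2_0(\Omega)$; moreover $\nabla\uu^{\varepsilon}\tscale \nabla_x\uu^0+\nabla_y\uu^1$ and $\nabla\emB^{\varepsilon}\tscale \nabla_x\emB^0+\nabla_y\emB^1$ for correctors $\uu^1,\emB^1\in L^2(\Omega;H^1_{\per}(Y,\RR^d))$ (modulo constants in $y$). The rigidity $\DD(\uu^{\varepsilon})=0$ on $\Omega_s^{\varepsilon}$ passes, via two-scale testing against $\varphi(x)\psi(x/\varepsilon)$ with $\psi$ supported in $Y_s$, to $\DD_y(\uu^1)=-\DD_x(\uu^0)$ on $Y_s$, i.e. $\uu^1$ is determined on $Y_s$ up to a rigid motion; likewise $\Curl_y\emB^1=0$ on $Y_f$. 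Standard cell problems whose data are $\DD(\uu^0)$, $\nabla\emB^0$, and $\uu^0\times\emB^0$ respectively produce the constant, symmetric, elliptic tensors $\calN,\calM,\calE$ appearing in \eqref{eq:78}; their symmetry and ellipticity follow from the variational characterization of the cell problems.

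The main obstacle is step (c): passing to the limit in $(\uu^{\varepsilon}\cdot\nabla)\uu^{\varepsilon}$ and $\Curl\emB^{\varepsilon}\times\emB^{\varepsilon}$, where both factors converge only weakly in $H^1$ and the div-curl lemma is inapplicable because neither $\Curl(\uu^{\varepsilon}\otimes\uu^{\varepsilon})$ nor $\Div(\Curl\emB^{\varepsilon})$ lies in a compact subset of $H^{-1}$. My plan is to invoke \cref{sec:two-scale-corrector}: the crucial energy hypothesis \eqref{eq:12} is obtained by testing \eqref{eq:5}--\eqref{eq:13} against the solution itself, which converts the energy identity at level $\varepsilon$ into the energy identity of the limit problem once the linear right-hand sides are seen to pass to the limit; this is done by a bootstrap, first for terms involving $\bg$ and $\hh$, then for the couplings, using that $\mathds{1}_{\Omega\times Y_s}$ is an admissible test function via the Carath\'eodory remark following \cref{sec:two-scale-corrector}. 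Once \eqref{eq:12} holds, \eqref{eq:36} identifies each weak-weak product with the correct cell-averaged two-scale limit, while \eqref{eq:37} gives the strong corrector convergence needed to pass to the limit in the balance equations \eqref{eq:13} tested against rigid motions on each $Y^{\varepsilon}_{i,s}$. A second application of \eqref{eq:79} ensures the homogenized system is also uniquely solvable, so the whole sequence (not just a subsequence) converges, concluding the proof.
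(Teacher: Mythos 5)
Your overall architecture (saddle-point well-posedness $\Rightarrow$ $\varepsilon$-uniform a priori bounds $\Rightarrow$ two-scale compactness $\Rightarrow$ cell problems $\Rightarrow$ macroscopic system) is the same as the paper's, and the identification of coercivity via $\kappa_K$, $\kappa_{GR}$ and local Lipschitz continuity via $\kappa_S$ is correct in spirit. However, there are two substantive issues.

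\textbf{The diagnosis of step (c) is off target.} The paper does \emph{not} apply the corrector result \cref{sec:two-scale-corrector} to $\uu^{\varepsilon}$ or $\emB^{\varepsilon}$, and no energy identity argument is needed to establish \eqref{eq:12} for these sequences: since $\uu^{\varepsilon}$, $\emB^{\varepsilon}$ are bounded in $H^1$, Rellich--Kondrachov already gives strong $L^2$ (indeed $L^4$) convergence, so the two-scale limits of $\uu^{\varepsilon}$, $\emB^{\varepsilon}$ are $y$-independent and \eqref{eq:12} is automatic. This strong convergence is exactly what lets the paper pass to the limit in $(\uu^{\varepsilon}\cdot\nabla)\uu^{\varepsilon}$ and $\Curl\emB^{\varepsilon}\times\emB^{\varepsilon}$ by the usual strong-times-weak argument (see the splittings into $J_1 + J_2$ and $K_1+K_2$). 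Where the corrector result genuinely earns its keep is on the characteristic function $\mathds{1}_{\Omega_s^{\varepsilon}}$, which is bounded but only weakly (two-scale) convergent and does \emph{not} converge strongly in $L^2$. For it, \eqref{eq:12} holds for elementary measure-theoretic reasons (see \eqref{eq:30}), so \eqref{eq:36}--\eqref{eq:37} give both the $\mathcal{D}'$-limit of products like $\mathds{1}_{\Omega_s^{\varepsilon}}\,\Curl\emB^{\varepsilon}$ and the strong replacement $\mathds{1}_{\Omega_s^{\varepsilon}}(x)\approx\mathds{1}_{\Omega\times Y_s}(x,x/\varepsilon)$ in $L^2$. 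That replacement is what lets the paper handle integrals over the $\varepsilon$-dependent domain $\Omega_s^{\varepsilon}$ against oscillating test functions like $\Curl_y\emC^1(x,x/\varepsilon)$. You mention the admissibility of $\mathds{1}_{\Omega\times Y_s}$ only as an afterthought in a bootstrap; in fact it is the central use of the corrector theorem, and the ``energy identity'' step you propose is unnecessary and points the corrector at the wrong objects.

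\textbf{Two further points.} First, the constraint you state for $\emB^1$ is incorrect: two-scaling $\Curl\emB^{\varepsilon}=0$ in $\Omega_f^{\varepsilon}$ gives $\Curl\emB^0 + \Curl_y\emB^1 = 0$ in $\Omega\times Y_f$ (cf.\ \eqref{eq:57}), not $\Curl_y\emB^1=0$; otherwise the cell problem \eqref{eq:64} for the effective reluctivity $\calM$ would degenerate. Second, merely invoking surjectivity of $\Div:\calV^{\varepsilon}\to\calP^{\varepsilon}$ gives an inf-sup constant that could a priori depend on $\varepsilon$ through the geometry of $\Omega_s^{\varepsilon}$; the paper needs (and supplies in \cref{sec:exist-priori-estim-4} and \cref{sec:stat-ferr-equat-8}) a Bogovski\u{\i}-type construction producing a lift that is rigid on each $Y^{\varepsilon}_{i,s}$ with $H^1_0$-norm bounded by $\norm{\Bog}\,\norm{p}_{L^2}$, giving $\beta=\norm{\Bog}^{-1}$ independent of $\varepsilon$. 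Without this, the pressure estimate \eqref{eq:47} is not $\varepsilon$-uniform and the two-scale extraction for $p^{\varepsilon}$ would not go through.
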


The road map of the proof of Theorem \ref{sec:main-results} goes as follows: 
\begin{itemize}
\item First, we present the variational formulation for problem
  \eqref{eq:5}--\eqref{eq:596} and prove their equivalence in
  \cref{sec:vari-form}.
\item Second, the existence and a priori estimates for the fine-scale velocity
  $\uu^{\varepsilon}$ and the magnetic field $\emB^{\varepsilon}$ are
  established in \cref{sec:exist-priori-estim}, thanks to
  \cref{sec:stat-ferr-equat-3}. The first two steps are adapted from
  the classical theory of magnetohydrodynamics, c.f. \cite{giraultFiniteElementMethods2012,schotzauMixedFiniteElement2004,gunzburgerExistenceUniquenessFinite1991,gerbeauMathematicalMethodsMagnetohydrodynamics2006,guermondMixedFiniteElement2003}. In particular, the presentation of those two steps is inspired by \cite{gerbeauMathematicalMethodsMagnetohydrodynamics2006,schotzauMixedFiniteElement2004,gunzburgerExistenceUniquenessFinite1991}.
\item Third, in \cref {sec:exist-priori-estim-2}, the existence and a priori estimate for the fine-scale pressure $p^{\varepsilon}$ are
  recovered by an inf-sup condition. A construction based on the  Bogovski\u{\i} map allows us to control the norm of the pressure
  $p^{\varepsilon}$ uniformly with respect to $\varepsilon$, c.f. \cite{acostaSolutionsDivergenceOperator2006,acostaDivergenceOperatorRelated2017,duerinckxQuantitativeHomogenizationTheory2021,duerinckxCorrectorEquationsFluid2021,duerinckxEffectiveViscosityRandom2021a,bellaInverseDivergenceHomogenization2021,hoferMotionSeveralSlender2021}.
\item Next, the two-scale homogenized problem is derived in
  \cref{sec:two-scale-homog}. Here, a corrector result of two-scale
  convergence \cite{allaireHomogenizationTwoscaleConvergence1992} is
  crucial for passing to the limit of several integrals over a changing
  domain.
\item Finally, the local and homogenized problems are recovered in
  \cref{sec:local-problem} and
  \cref{sec:homogenized-problem}. Explicit formulas for the \emph{effective viscosity} $\calN$, the \emph{effective magnetic reluctivity} $\calM$ and the \emph{effective electric conductivity} $\calE$ are provided in \eqref{eq:77}.
\end{itemize}

\subsection{Variational formulation}
\label{sec:vari-form}

We define bilinear, trilinear, and linear forms $\calA^{\varepsilon}(\cdot,\cdot): \calX^{\varepsilon} \times
  \calX^{\varepsilon}\to \mathbb{R}$, $\calB^{\varepsilon}(\cdot,\cdot):\calX^{\varepsilon} \times
  \calP^{\varepsilon}\to \mathbb{R}$, and  $\calC^{\varepsilon}(\cdot,\cdot,\cdot):\calX^{\varepsilon} \times\calX^{\varepsilon} \times\calX^{\varepsilon} \to \mathbb{R}$, $\calL^{\varepsilon}(\cdot):\calX^{\varepsilon} \to \mathbb{R}$ by
\begin{align*}
  \calA^{\varepsilon} \left( (\uu,\emB),(\vv,\emC) \right)
  &\coloneqq  2 \int_{\Omega^{\varepsilon}_f} \DD(\uu):\DD(\vv) \di x
  \\
  &{}+{}
    \frac{\nAl}{\nRm} \left[ \int_{\Omega} \Div \emB \cdot \Div
      \emC \di x + \int_{\Omega^{\varepsilon}_s} \Curl \emB \cdot
    \Curl \emC \di x
    \right],\\
  \calB^{\varepsilon} \left( (\vv,\emC),p \right)
  &\coloneqq \int_{\Omega} p \Div \vv \di x,\\
  \calC^{\varepsilon} \left( (\uu_1,\emC_1), (\uu_2, \emC_2), (\uu_3, \emC_3) \right)
  &\coloneqq  \nRe\int_{\Omega} \left( \uu_1 \cdot \nabla \right) \uu_2
    \cdot \uu_3 \di x \\
  &
    {}-{} \nAl \int_{\Omega^{\varepsilon}_s} \left[ (\Curl \emC_2
    \times \emC_1) \cdot \uu_3 \right.\nonumber\\
  &\qquad{} +{}\left. \left( \uu_2 \times \emC_1
    \right)\cdot \Curl \emC_3  \right]\di x,\\
  \calL^{\varepsilon} (\vv,\emC)
  &\coloneqq \nRe \int_{\Omega} \bg \cdot \vv \di x+ \nAl\int_{\Omega^{\varepsilon}_s}
    \hh \cdot \emC \di x.
\end{align*}

We consider the weak formulation of problem \eqref{eq:5}:

\emph{Find $\left( (\uu^{\varepsilon},\emB^{\varepsilon}), p^{\varepsilon} \right) \in \calX^{\varepsilon} \times
  \calP^{\varepsilon}$ such that for all $\left( (\vv,\emC), q \right) \in
  \calX^{\varepsilon}\times \calP^{\varepsilon}$,}
\begin{align}
\label{eq:594}
  \begin{split}
\calA^{\varepsilon} \left( (\uu^{\varepsilon},\emB^{\varepsilon}),(\vv,\emC) \right) + \calB^{\varepsilon} \left( (\vv,\emC),p^{\varepsilon}
\right) + \calC^{\varepsilon} \left( (\uu^{\varepsilon},\emB^{\varepsilon}), (\uu^{\varepsilon}, \emB^{\varepsilon}), (\vv, \emC) \right)
&= \calL^{\varepsilon} (\vv,\emC),\\
\calB^{\varepsilon} \left( (\uu^{\varepsilon},\emB^{\varepsilon}),q \right)
&=0.
  \end{split}
\end{align}
Before showing that the weak formulation \eqref{eq:594} is equivalent
to the strong formulation \eqref{eq:5}, we recall:
\begin{lemma}[Lemma 3.17 \cite{gerbeauMathematicalMethodsMagnetohydrodynamics2006}]
\label{sec:stat-ferr-equat-1}
If $\emB \in H^1_n(\Omega,\RR^d)$, then there exists $\psi \in
H^2(\Omega)$ such that 
\begin{equation}
\label{eq:595}
\left\{
  \begin{array}{r l l}
-\Delta \psi & \displaystyle = \Div \emB &\text{ in }\Omega,\\[1pt]
 \displaystyle     \frac{\partial \psi}{\partial n}
      & \displaystyle = 0 &\text{ on }\partial \Omega.
  \end{array}
  \right.
\end{equation}
In particular, $\nabla \psi \in H^1_n(\Omega,\RR^d)$.
\end{lemma}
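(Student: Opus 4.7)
The plan is to solve the Neumann problem as a standard weak problem on the quotient space and then upgrade to $H^2$ via elliptic regularity for $C^{1,1}$ domains. First, I would verify the compatibility condition required by the Neumann boundary value problem. Since $\emB \in H^1(\Omega,\RR^d)$ with vanishing normal trace $\emB \cdot \nn_{\partial \Omega} = 0$, the divergence theorem gives
\begin{equation*}
\int_{\Omega} \Div \emB \di x = \int_{\partial \Omega} \emB \cdot \nn_{\partial \Omega} \di \hmeas = 0,
\end{equation*}
so $\Div \emB \in L^2(\Omega)$ has zero mean, matching the compatibility needed for a homogeneous Neumann datum.

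Next, I would set up the variational formulation on the space $V \coloneqq \{\varphi \in H^1(\Omega) \colon \int_\Omega \varphi \di x = 0\}$: find $\psi \in V$ such that
\begin{equation*}
\int_\Omega \nabla \psi \cdot \nabla \varphi \di x = \int_\Omega (\Div \emB)\, \varphi \di x \quad \text{for all } \varphi \in V.
\end{equation*}
Coercivity of the left-hand side follows from the Poincar\'e-Wirtinger inequality on $V$, and continuity of the right-hand side from $\Div \emB \in L^2(\Omega)$; Lax-Milgram therefore delivers a unique weak solution $\psi \in V \subset H^1(\Omega)$. A test-function argument against $\varphi \in \mathcal{D}(\Omega)$ gives $-\Delta \psi = \Div \emB$ in the distributional sense in $\Omega$, and choosing $\varphi \in V$ with nonvanishing trace then yields $\partial \psi / \partial \nn_{\partial \Omega} = 0$ on $\partial \Omega$ as a natural boundary condition (in $H^{-1/2}(\partial \Omega)$).

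The main technical step is the $H^2$ regularity upgrade. Because $\Omega$ is a bounded simply connected domain of class $C^{1,1}$, and the right-hand side $\Div \emB$ lies in $L^2(\Omega)$ with the Neumann datum being zero (hence trivially in $H^{1/2}(\partial \Omega)$), the classical elliptic regularity theorem for the Neumann Laplacian on $C^{1,1}$ domains (see e.g.\ Grisvard's regularity results, or Theorem 2.4.2.7 in the standard references) gives $\psi \in H^2(\Omega)$ together with the estimate $\norm{\psi}_{H^2(\Omega)} \lesssim \norm{\Div \emB}_{L^2(\Omega)}$. This is the only nontrivial ingredient: the weak solvability is routine, but the $H^2$ bound genuinely uses the $C^{1,1}$ assumption on $\partial \Omega$ stated in \cref{ss:setup}.

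Finally, for the ``in particular'' statement, I note that $\psi \in H^2(\Omega)$ forces $\nabla \psi \in H^1(\Omega,\RR^d)$, while the Neumann condition $\partial \psi / \partial \nn_{\partial \Omega} = \nabla \psi \cdot \nn_{\partial \Omega} = 0$ on $\partial \Omega$ is exactly the defining condition of the space $H^1_n(\Omega,\RR^d)$. Hence $\nabla \psi \in H^1_n(\Omega,\RR^d)$, completing the proof. The only place where one must be careful is to invoke elliptic regularity in the precise form valid for $C^{1,1}$ (rather than merely Lipschitz) boundaries; everything else is a direct application of Lax-Milgram and trace theory.
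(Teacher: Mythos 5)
Your proof is correct: the paper itself gives no argument for this lemma (it is quoted as Lemma 3.17 of the cited MHD reference), and your route --- compatibility of the Neumann datum from $\emB\cdot\nn_{\partial\Omega}=0$, Lax--Milgram on the mean-zero subspace, $H^2$ elliptic regularity for the Neumann Laplacian on the $C^{1,1}$ domain, and then reading off $\nabla\psi\in H^1_n(\Omega,\RR^d)$ from $\psi\in H^2(\Omega)$ and $\partial\psi/\partial n=0$ --- is exactly the standard proof underlying that citation. No gaps worth flagging; the only point deserving the care you already gave it is that the zero-mean compatibility is what lets the variational identity extend beyond the mean-zero space, so that both the distributional equation and the natural boundary condition in $H^{-1/2}(\partial\Omega)$ follow.
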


\begin{proposition}
\label{sec:stat-ferr-equat-2}
Suppose that $\hh$ satisfies \eqref{eq:596}. Then 
$\left( (\uu^{\varepsilon},\emB^{\varepsilon}),p^{\varepsilon} \right) \in \calX^{\varepsilon} \times \calP^{\varepsilon}$ is a
weak solution of \eqref{eq:594} if and only if it is a solution of \eqref{eq:5}-\eqref{eq:583s}. 
\end{proposition}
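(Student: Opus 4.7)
The plan is to prove the two implications of the equivalence separately, leveraging Green's formulas adapted to the rigid-motion structure of $\calV^\varepsilon$ for \emph{strong $\Rightarrow$ weak}, and a judicious choice of test functions in $\calX^\varepsilon$ for the reverse direction. For the forward direction I would multiply \eqref{eq:6} by an arbitrary $\vv\in\calV^\varepsilon$ and integrate on $\Omega^\varepsilon_f$; because $\vv|_{\partial\Omega}=0$ the only surface contribution is $\sum_i\int_{\Gamma^\varepsilon_i}(\causs^\varepsilon\nn_i)\cdot\vv\,\di\hmeas$. The constraint $\DD(\vv)=0$ on $\Omega^\varepsilon_s$ forces $\vv = \bfa_i + \bfb_i\times(x-\GG^\varepsilon_i)$ on each $Y^\varepsilon_{i,s}$, so the balance equations \eqref{eq:13} tested against $\bfa_i$ and $\bfb_i$ replace these surface integrals by the convective, Lorentz and gravitational volume integrals on the inclusions. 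Summing yields exactly the first equation of \eqref{eq:594} with $\emC=0$. For the magnetic equation I would test \eqref{eq:10} against $\emC$ from the second factor of $\calX^\varepsilon$, integrate by parts on $\Omega^\varepsilon_s$, and cancel boundary terms on $\Gamma^\varepsilon$ via $\Curl\emB^\varepsilon=0$ and $\Curl\emC=0$ in $\Omega^\varepsilon_f$, and those on $\partial\Omega$ via $\emC\cdot\nn_{\partial\Omega}=0$; the penalty $\frac{\nAl}{\nRm}\int_\Omega\Div\emB^\varepsilon\Div\emC\,\di x$ in $\calA^\varepsilon$ is harmless by \eqref{eq:11}, while the continuity constraint follows directly from \eqref{eq:7} and the definition of $\calP^\varepsilon$.

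For the reverse direction, I would first recover \eqref{eq:11} by setting $\vv=0$ and $\emC=\nabla\psi$ with $\psi\in H^2(\Omega)$ provided by \cref{sec:stat-ferr-equat-1} for $-\Delta\psi = \Div\emB^\varepsilon$; then $\Curl\emC\equiv 0$ makes every magnetic trilinear contribution vanish, the compatibility condition \eqref{eq:596} annihilates the $\hh$-forcing, and the weak equation collapses to $\frac{\nAl}{\nRm}\int_\Omega(\Div\emB^\varepsilon)^2\,\di x = 0$. Next, taking $\vv\in\calD(\Omega^\varepsilon_f,\RR^d)$ (for which $\DD(\vv)|_{\Omega^\varepsilon_s}=0$ trivially) and $\emC=0$ recovers \eqref{eq:6} distributionally and produces the pressure $p^\varepsilon$ by a De Rham-type argument; symmetrically, $\emC\in\calD(\Omega^\varepsilon_s,\RR^d)$ with $\vv=0$ gives \eqref{eq:10} on each inclusion. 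The remaining equations \eqref{eq:8}, \eqref{eq:9} and the homogeneous boundary conditions \eqref{eq:583s} are already encoded in $\calX^\varepsilon$. Finally, choosing $\vv$ equal to a single rigid motion on one prescribed $Y^\varepsilon_{i,s}$ (smoothly continued into $\Omega^\varepsilon_f$) and running the forward calculation in reverse reproduces the balance identities \eqref{eq:13}.

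The main delicacy I anticipate is the bookkeeping of the two asymmetric magnetic contributions in $\calC^\varepsilon$, namely $(\Curl\emC_2\times\emC_1)\cdot\uu_3$ and $(\uu_2\times\emC_1)\cdot\Curl\emC_3$: matching them to the Lorentz force $\Curl\emB^\varepsilon\times\emB^\varepsilon$ in \eqref{eq:13} and to $\Curl(\uu^\varepsilon\times\emB^\varepsilon)$ in \eqref{eq:10} requires repeated use of the scalar triple product identity combined with the vanishing of $\Curl\emC$ on $\Omega^\varepsilon_f$, and a sign slip here would break the consistency between the PDE, the balance equations and the weak form. A secondary technical point is to verify that the distribution recovered by De Rham actually lies in $\calP^\varepsilon = \Div(\calV^\varepsilon)$, which is guaranteed by the very definition of $\calP^\varepsilon$ but should be made explicit; once these identities and the pressure recovery are in place, the rest of the equivalence reduces to standard manipulations of rigid-motion and compactly supported test functions.
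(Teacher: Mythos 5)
Your proposal follows essentially the same route as the paper's proof: test the weak form with $\emC=\mathbf{0}$ and $\vv$ either compactly supported in $\Omega^\varepsilon_f$ (giving \eqref{eq:6}) or a rigid motion on each inclusion (giving \eqref{eq:13}); then $\vv=\mathbf{0}$, $\emC=\nabla\psi$ from \cref{sec:stat-ferr-equat-1} together with \eqref{eq:596} to obtain \eqref{eq:11}; then $\emC$ compactly supported in $\Omega^\varepsilon_s$ to obtain \eqref{eq:10}, with the constraints encoded in $\calX^\varepsilon$ handling \eqref{eq:8}, \eqref{eq:9} and \eqref{eq:583s}. One small overreach: there is no De Rham step needed to ``produce'' $p^\varepsilon$, since the pressure is part of the given triple in both formulations and \eqref{eq:6} is recovered by directly integrating $\int_\Omega p^\varepsilon\Div\vv\,\di x$ by parts.
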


\begin{proof}
The incompressibility condition \eqref{eq:7} is straightforward from
the second equation of \eqref{eq:594}. We rewrite the first equation
of \eqref{eq:594} as
\begin{align}
\label{eq:593}
  \begin{split}
    &2\int_{\Omega^{\varepsilon}_f} \DD(\uu^{\varepsilon}):\DD(\vv) \di x +
    \frac{\nAl}{\nRm} \left[ \int_{\Omega} \Div \emB^{\varepsilon} \cdot \Div
      \emC \di x + \int_{\Omega^{\varepsilon}_s} \Curl
      \emB^{\varepsilon} \cdot \Curl \emC \di x
    \right] \\
    &{}+{} \int_{\Omega} p^{\varepsilon} \Div \vv \di x {}+{} \nRe \int_{\Omega} \left( \uu^{\varepsilon} \cdot \nabla \right) \uu^{\varepsilon}
    \cdot \vv \di x \\
    &{}-{} \nAl \int_{\Omega^{\varepsilon}_s} (\Curl \emB^{\varepsilon} \times
    \emB^{\varepsilon}) \cdot \vv \di x -\nAl\int_{\Omega^{\varepsilon}_s} \left( \uu^{\varepsilon} \times \emB^{\varepsilon}
    \right)\cdot \Curl \emC \di x \\
    &=  \nRe \int_{\Omega} \bg \cdot \vv \di x
    + \nAl \int_{\Omega^{\varepsilon}_s}\hh \cdot \emC \di x.
  \end{split}
\end{align}
Let $\emC = 0$ and choose $\vv \in C^{\infty}_c(\Omega^{\varepsilon}_f,\RR^d)$, then using
integration by parts, we obtain \eqref{eq:6}. Setting $\emC = 0$ again
and choosing $\vv \in H_0^1(\Omega, \RR^d)$ with $\DD(\vv) = 0$ on
$\Omega^{\varepsilon}_s$, we obtain the balance equations \eqref{eq:13}.

Next, choosing $\vv = 0$ in \eqref{eq:593} results in 
\begin{align}
  \label{eq:597}
  \begin{split}
\frac{\nAl}{\nRm} \left[ \int_{\Omega} \Div \emB^{\varepsilon} \cdot \Div
      \emC \di x + \int_{\Omega_s} \Curl \emB^{\varepsilon} \cdot
  \Curl \emC \di x
\right]\\
- \nAl\int_{\Omega_s} \left( \uu^{\varepsilon} \times \emB^{\varepsilon}
  \right)\cdot \Curl \emC \di x
  = \nAl \int_{\Omega_s} \hh \cdot \emC \di x.
  \end{split}
\end{align}
Let $\psi$ as in \cref{sec:stat-ferr-equat-1} and select $\emC =
\nabla \psi$ in \eqref{eq:597}, then by \eqref{eq:596},
\begin{align*}
-\frac{\nAl}{\nRm} \int_{\Omega} (\Div \emB^{\varepsilon})^2 \di x = 0
\end{align*}
so we obtain \eqref{eq:11}. Therefore, \eqref{eq:597} is simplified
to 
\begin{align*}
\frac{1}{\nRm} \int_{\Omega_s} \Curl \emB^{\varepsilon} \cdot \Curl \emC \di x
    - \int_{\Omega_s} \left( \uu^{\varepsilon} \times \emB^{\varepsilon}
  \right)\cdot \Curl \emC \di x
  = \int_{\Omega_s} \hh \cdot \emC \di x,
\end{align*}
Choose $\emC \in C^{\infty}_c(\Omega_s,\RR^d)$ and integrate by parts, this implies \eqref{eq:10}.

\end{proof}

\subsection{Existence and a priori estimates for the fine-scale velocity and the
magnetic field}
\label{sec:exist-priori-estim}


First, we recall
an important estimate for proving ellipticity \eqref{eq:600}:

\begin{proposition}[Theorem
  3.8 \cite{giraultFiniteElementMethods2012}]
\label{sec:stat-ferr-equat-4}
There exists $\kappa_{GR} > 0$ such that, for any $\emB \in \calH$, 
\begin{align}
\label{eq:603}
  \kappa_{GR} \norm{\emB}^2_{H^1(\Omega,\RR^d)}
  \le \norm{\Curl \emB}^2_{L^2(\Omega,\RR^d)} + \norm{\Div \emB}^2_{L^2(\Omega)}.
\end{align}
\end{proposition}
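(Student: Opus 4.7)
The plan is the classical one for Gaffney-type inequalities: first establish the weaker estimate
\begin{align*}
  \norm{\emB}_{H^1(\Omega,\RR^d)}^2
  \le C \bigl( \norm{\Curl \emB}_{L^2(\Omega,\RR^d)}^2 + \norm{\Div \emB}_{L^2(\Omega)}^2 + \norm{\emB}_{L^2(\Omega,\RR^d)}^2 \bigr)
\end{align*}
for every $\emB \in \calH$, and then remove the spurious $\norm{\emB}_{L^2}^2$ term by a compactness--contradiction argument exploiting the simple connectedness of $\Omega$.

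For the weak estimate, I would first work with smooth fields $\emB$ satisfying $\emB \cdot \nn_{\partial \Omega} = 0$ and use the pointwise identity
\begin{align*}
  \abs{\nabla \emB}^2 = \abs{\Curl \emB}^2 + (\Div \emB)^2 + \partial_i \bigl( B_j \partial_j B_i - B_i \partial_j B_j \bigr),
\end{align*}
which follows from $(\Curl \emB)_k = \epsilon_{kij} \partial_i B_j$. Integrating and applying the divergence theorem, the volume contribution is exactly $\norm{\Curl \emB}_{L^2}^2 + \norm{\Div \emB}_{L^2}^2$, while the divergence term yields a boundary integral that, upon using $\emB \cdot \nn_{\partial \Omega} = 0$ and differentiating this relation tangentially, reduces to a quadratic form in the tangential trace of $\emB$ against the second fundamental form of $\partial \Omega$. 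The $C^{1,1}$ regularity of $\partial \Omega$ bounds this form in $L^\infty$, so the boundary integral is controlled by $C \norm{\emB}_{L^2(\partial \Omega)}^2$; a standard trace inequality followed by Young's inequality then absorb this contribution into a small multiple of $\norm{\nabla \emB}_{L^2}^2$ plus $C \norm{\emB}_{L^2}^2$, giving the weak inequality for smooth fields. A density argument (smooth vector fields with vanishing normal trace are dense in $\calH$ for $C^{1,1}$ domains) extends it to all of $\calH$, and in particular shows every element of $\calH$ belongs to $H^1(\Omega,\RR^d)$.

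To upgrade to the sharp form I argue by contradiction: if no $\kappa_{GR}$ worked, one could produce a sequence $\{\emB^n\} \subset \calH$ with $\norm{\emB^n}_{H^1} = 1$ and $\norm{\Curl \emB^n}_{L^2}^2 + \norm{\Div \emB^n}_{L^2}^2 \to 0$. By Rellich compactness a subsequence converges in $L^2$; the weak inequality then forces it to be Cauchy in $H^1$, with $H^1$-limit $\emB^\infty \in \calH$ satisfying $\norm{\emB^\infty}_{H^1} = 1$, $\Curl \emB^\infty = 0$, $\Div \emB^\infty = 0$, and $\emB^\infty \cdot \nn_{\partial \Omega} = 0$. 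Since $\Omega$ is simply connected, $\Curl \emB^\infty = 0$ yields $\emB^\infty = \nabla \varphi$ for some $\varphi \in H^2(\Omega)$; the remaining conditions become $\Delta \varphi = 0$ in $\Omega$ and $\nabla \varphi \cdot \nn_{\partial \Omega} = 0$ on $\partial \Omega$, forcing $\varphi$ constant and $\emB^\infty = 0$, contradicting $\norm{\emB^\infty}_{H^1} = 1$. The delicate part is the weak estimate: the boundary-term analysis relies on the $L^\infty$ bound on the second fundamental form, which is exactly what the $C^{1,1}$ hypothesis on $\partial \Omega$ provides (and in practice requires localizing and flattening the boundary); the simple connectedness of $\Omega$ is also essential, since otherwise the kernel of $(\Curl, \Div, \nn\text{-trace})$ would contain nontrivial harmonic Neumann fields and the inequality would fail.
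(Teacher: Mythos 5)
The paper does not prove this proposition at all: it is quoted verbatim as Theorem 3.8 of Girault--Raviart, so there is no internal argument to compare against. Your proposal is, in outline, the classical proof of that cited theorem (Gaffney identity plus boundary-curvature control to get the estimate with an extra $\norm{\emB}_{L^2}^2$ term, then a Rellich compactness--contradiction argument in which simple connectedness kills the kernel of $(\Curl,\Div)$ with vanishing normal trace), and both halves are sound: the pointwise identity and the reduction of the boundary integral to a quadratic form in the tangential trace against the second fundamental form are correct, the $C^{1,1}$ hypothesis is exactly what bounds that form, and the identification of the limiting field as $\nabla\varphi$ with $\Delta\varphi=0$, $\partial_n\varphi=0$ correctly uses simple connectedness.

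The one genuinely delicate point is the density step you invoke in passing: to extend the weak estimate from smooth fields with $\emB\cdot\nn_{\partial\Omega}=0$ to all of $\calH$ you need such fields to be dense in $\calH$ for the \emph{graph norm} $\norm{\emB}_{L^2}+\norm{\Curl\emB}_{L^2}+\norm{\Div\emB}_{L^2}$, and proving this without already knowing $\calH\subset H^1(\Omega,\RR^d)$ is not routine (for Maxwell-type spaces with boundary conditions such density can genuinely fail on non-convex polyhedra, which is why it cannot be waved through as "standard"). This is precisely why the proof in Girault--Raviart takes a different route to the embedding: given $\emB\in\calH$ one solves a Neumann problem for the divergence part, uses $H^2$ elliptic regularity on the $C^{1,1}$ domain, and represents the remaining divergence-free part by a vector potential, concluding $\emB\in H^1$ by regularity rather than by approximation; the removal of the $L^2$ term is then done by the same compactness/equivalence-of-norms argument you give. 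So your scheme is correct and standard for smooth domains, but to be complete at the stated $C^{1,1}$ regularity you should either supply (or cite) the density result for $\calH$ in the graph norm, or replace the density step by the potential-theoretic regularity argument of the cited theorem.
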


\begin{lemma}
\label{sec:stat-ferr-equat-5}
The form $\calA^{\varepsilon}$ is continuous and coercive on $\calX^{\varepsilon} \times
\calX^{\varepsilon}$, with coercivity constant $\alpha$ independent of
$\varepsilon$. In fact, $\alpha = \min \left\{
  \frac{\nAl}{\nRm}\kappa_{GR}, \kappa_K \right\} > 0$, where $\kappa_{GR}$
is the constant in \eqref{eq:603} and $\kappa_K^{-1}$ is the constant in
Korn's inequality.
\end{lemma}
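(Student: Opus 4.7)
The plan is to prove continuity by a direct Cauchy--Schwarz estimate and to establish coercivity by exploiting the defining constraints of $\calX^\varepsilon$ to reduce the $\varepsilon$-dependent integration domains to the full domain $\Omega$, after which the $\varepsilon$-independence of the constant becomes transparent.

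For continuity, I would simply bound each of the three terms in $\calA^\varepsilon((\uu,\emB),(\vv,\emC))$ by Cauchy--Schwarz: $\int_{\Omega^\varepsilon_f}|\DD(\uu)||\DD(\vv)|$, $\int_\Omega|\Div\emB||\Div\emC|$ and $\int_{\Omega^\varepsilon_s}|\Curl\emB||\Curl\emC|$ are each controlled by the product of the $H^1$-norms of the relevant components, and the prefactors $2$ and $\nAl/\nRm$ are absorbed into a single continuity constant depending only on the physical parameters (not on $\varepsilon$).

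For coercivity, the key observation is that the defining constraints of $\calX^\varepsilon$ let us replace the perforated integration domains by $\Omega$ at no cost. Indeed, if $(\uu,\emB)\in\calX^\varepsilon$ then $\DD(\uu)=0$ on $\Omega^\varepsilon_s$, so $\int_{\Omega^\varepsilon_f}|\DD(\uu)|^2\,dx = \int_\Omega|\DD(\uu)|^2\,dx$; and $\Curl\emB=0$ on $\Omega^\varepsilon_f$, so $\int_{\Omega^\varepsilon_s}|\Curl\emB|^2\,dx = \int_\Omega|\Curl\emB|^2\,dx$. Consequently
\begin{equation*}
\calA^\varepsilon((\uu,\emB),(\uu,\emB))
= 2\int_\Omega|\DD(\uu)|^2\,dx
  + \frac{\nAl}{\nRm}\int_\Omega\!\bigl(|\Curl\emB|^2+|\Div\emB|^2\bigr)\,dx.
\end{equation*}
Since $\uu\in H^1_0(\Omega,\RR^d)$, Korn's inequality (combined with Poincaré) yields $\norm{\uu}_{H^1}^2 \le \kappa_K^{-1}\int_\Omega|\DD(\uu)|^2\,dx$. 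Since $\emB\in H^1_n(\Omega,\RR^d)$ with $\Curl\emB,\Div\emB\in L^2$, we have $\emB\in\calH$, and \cref{sec:stat-ferr-equat-4} gives $\kappa_{GR}\norm{\emB}_{H^1}^2 \le \norm{\Curl\emB}_{L^2}^2+\norm{\Div\emB}_{L^2}^2$. Combining these two bounds:
\begin{equation*}
\calA^\varepsilon((\uu,\emB),(\uu,\emB))
\ge 2\kappa_K\norm{\uu}_{H^1}^2 + \frac{\nAl}{\nRm}\kappa_{GR}\norm{\emB}_{H^1}^2
\ge \alpha\,\norm{(\uu,\emB)}_{\calX^\varepsilon}^2,
\end{equation*}
with $\alpha = \min\{(\nAl/\nRm)\kappa_{GR},\kappa_K\}$ (the factor $2$ is absorbed into the minimum), giving exactly the claimed constant.

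There is no real obstacle here: the only point that needs care is the justification that the geometric restrictions $\DD(\uu)=0$ on $\Omega^\varepsilon_s$ and $\Curl\emB=0$ on $\Omega^\varepsilon_f$ make the $\varepsilon$-dependence disappear before Korn's inequality and \cref{sec:stat-ferr-equat-4} are applied. Once that simple observation is made, the constants $\kappa_K$ and $\kappa_{GR}$ depend only on the fixed macroscopic domain $\Omega$, so the coercivity constant $\alpha$ is automatically independent of $\varepsilon$, as required.
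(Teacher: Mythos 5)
Your proof is correct and takes essentially the same route as the paper: bound $\calA^\varepsilon$ by Cauchy--Schwarz for continuity, and for coercivity use the constraints $\DD(\uu)=0$ on $\Omega^\varepsilon_s$ and $\Curl\emB=0$ on $\Omega^\varepsilon_f$ built into $\calX^\varepsilon$ to extend the integrals over all of $\Omega$ before invoking Korn's inequality and \cref{sec:stat-ferr-equat-4}. The only cosmetic difference is that the paper drops the factor $2$ before applying Korn's inequality rather than afterwards, but the resulting $\alpha$ is the same.
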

\begin{proof}
For any $\left(\left( \uu, \emB \right), \left( \vv,\emC \right)\right)$ in $\calX^{\varepsilon} \times
\calX^{\varepsilon}$,  we have: 
\begin{align*}
\abs{\calA^{\varepsilon} \left( \left( \uu, \emB \right), \left( \vv,\emC \right)
  \right)}
  &\le 2\norm{\DD (\uu)}_{L^2(\Omega,\RR^{d \times d})} \norm{\DD
    (\vv)}_{L^2(\Omega,\RR^{d \times d})}\nonumber\\
  &\qquad{} +{} \frac{\nAl}{\nRm}
    \left[
    \norm{\Div \emB}_{L^2(\Omega)} \norm{\Div
    \emC}_{L^2(\Omega)}\right.\\
  & \qquad {}+{} \left.\norm{\Curl \emB}_{L^2(\Omega,\RR^d)} \norm{\Curl \emC}_{L^2(\Omega,\RR^d)}
    \right]\\
  &\le C\left( \Omega, \frac{\nAl}{\nRm}\right) \norm{(\uu,\emB)}_{\calX^{\varepsilon}} \norm{\left( \vv,\emC \right)}_{\calX^{\varepsilon}}.
\end{align*}
Therefore, $\calA^{\varepsilon}$ is continuous. Moreover, by
\eqref{eq:603} and Korn's inequality, 
\begin{align*}
  \calA^{\varepsilon} \left( (\uu,\emB), (\uu,\emB) \right)
  &\ge \int_{\Omega_f} \abs{\DD(\uu)}^2 \di x +
    \frac{\nAl}{\nRm} \left[
    \int_{\Omega} \abs{\Div \emB}^2 \di x + \int_{\Omega_s}
    \abs{\Curl \emB}^2 \di x
      \right]\\
  &=  \int_{\Omega} \abs{\DD(\uu)}^2 \di x +
    \frac{\nAl}{\nRm} \left[
    \int_{\Omega} \abs{\Div \emB}^2 \di x + \int_{\Omega}
    \abs{\Curl \emB}^2 \di x  
    \right]\\
  &\ge \alpha 
    \norm{(\uu,\emB)}^2_{\calX^{\varepsilon}}. 
\end{align*}
\end{proof}

\begin{lemma}
  \label{sec:stat-ferr-equat-7}
  The trilinear form $\calC^{\varepsilon}$ is continuous on $ \calX^{\varepsilon}
  \times \calX^{\varepsilon} \times \calX^{\varepsilon}$. Moreover, suppose
  $\rho_f=\rho_s$, then 
  for all $\left(
    (\uu,\emB), (\vv,\emC), (\ww,\emD) \right) \in \calX^{\varepsilon}
  \times \calX^{\varepsilon} \times \calX^{\varepsilon}$ with $\Div
  \uu = 0$, one has 
\begin{align*}
\calC^{\varepsilon} \left( (\uu, \emB), (\vv,\emC), (\ww, \emD) \right) = -\calC^{\varepsilon}
  \left( (\uu, \emB),  (\ww, \emD), (\vv,\emC) \right).
\end{align*}

\end{lemma}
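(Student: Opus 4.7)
To establish continuity, I would bound each of the three pieces of $\calC^{\varepsilon}$ using Hölder's inequality together with the Sobolev embedding $H^1(\Omega)\hookrightarrow L^4(\Omega)$, whose constant $\kappa_S$ is already introduced in the paper. For the convective piece,
\begin{equation*}
\abs{\int_\Omega (\uu_1\cdot \nabla) \uu_2 \cdot \uu_3 \di x} \le \norm{\uu_1}_{L^4} \norm{\nabla \uu_2}_{L^2} \norm{\uu_3}_{L^4} \le \kappa_S^2 \norm{\uu_1}_{H^1} \norm{\uu_2}_{H^1} \norm{\uu_3}_{H^1},
\end{equation*}
and for each of the two magnetic trilinear couplings, Hölder likewise yields bounds of the form $\norm{\Curl \emC_i}_{L^2} \norm{\emC_j}_{L^4} \norm{\uu_k}_{L^4}$, which are controlled by the product of three $\calX^{\varepsilon}$-norms (since $\calX^{\varepsilon}$ controls the full $H^1$ norm of the magnetic components). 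All constants obtained this way are independent of $\varepsilon$.

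For the antisymmetry property I would separate the convective piece from the two magnetic pieces, since they involve disjoint parts of the arguments. Since $\uu \in \calV^{\varepsilon} \subset H_0^1(\Omega,\RR^d)$ and $\Div \uu = 0$, the classical integration-by-parts identity
\begin{equation*}
\int_\Omega (\uu \cdot \nabla) \vv \cdot \ww \di x = -\int_\Omega (\uu \cdot \nabla) \ww \cdot \vv \di x
\end{equation*}
shows that the convective contributions of $\calC^{\varepsilon}((\uu,\emB),(\vv,\emC),(\ww,\emD))$ and $\calC^{\varepsilon}((\uu,\emB),(\ww,\emD),(\vv,\emC))$ are exact negatives of each other. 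This is where the divergence-free hypothesis on $\uu$ enters, and where the assumption $\rho_f = \rho_s$ becomes relevant, since it collapses the two density coefficients into the single $\nRe$ multiplying this term.

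The heart of the lemma is the cancellation of the four magnetic triple-product contributions, and I expect this to happen \emph{pointwise} on $\Omega_s^{\varepsilon}$, using only the scalar triple product identity $(a\times b)\cdot c = a\cdot (b\times c)$ and the antisymmetry of the cross product, with no integration by parts and no use of $\Div \uu = 0$. Concretely, at each point of $\Omega_s^{\varepsilon}$ one verifies
\begin{equation*}
(\Curl \emC \times \emB) \cdot \ww + (\ww \times \emB) \cdot \Curl \emC = 0, \qquad (\vv \times \emB) \cdot \Curl \emD + (\Curl \emD \times \emB) \cdot \vv = 0,
\end{equation*}
and the four terms above are exactly the magnetic contributions that appear when adding $\calC^{\varepsilon}((\uu,\emB),(\vv,\emC),(\ww,\emD))$ and $\calC^{\varepsilon}((\uu,\emB),(\ww,\emD),(\vv,\emC))$. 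Combined with the convective identity, this yields the stated antisymmetry. The only mild pitfall is careful sign bookkeeping in the scalar triple product; otherwise the argument is direct and requires no regularity beyond what is built into $\calX^{\varepsilon}$.
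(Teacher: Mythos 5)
Your proposal is correct, and it rests on the same two ingredients as the paper's proof: the pointwise antisymmetry of the scalar triple product for the magnetic contributions (so that $(\Curl\emC\times\emB)\cdot\ww$ and $(\ww\times\emB)\cdot\Curl\emC$ cancel, and likewise for the $\vv$--$\emD$ pair), and the integration-by-parts identity for the convective term which uses $\Div\uu=0$ together with $\uu\in H_0^1(\Omega,\RR^d)$. The only organizational difference is that the paper first establishes the diagonal vanishing $\calC^{\varepsilon}\bigl((\uu,\emB),(\vv,\emC),(\vv,\emC)\bigr)=0$ and then polarizes in the last two slots to deduce skew-symmetry, whereas you verify the skew-symmetry directly by pairing the four magnetic terms and the two convective terms -- an equivalent, slightly more direct route.
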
 

\begin{proof}
We write 
\begin{align*}
  &\abs{\calC^{\varepsilon} \left( (\uu, \emB), (\vv,\emC), (\ww, \emD) \right)}\\
  &\le C \left( \norm{\uu}_{H^1} \norm{\vv}_{H^1} \norm{\ww}_{H^1} +
    \norm{\emC}_{H^1} \norm{\emB}_{H^1}\norm{\ww}_{H^1} +
    \norm{\vv}_{H^1} \norm{\emB}_{H^1} \norm{\emD}_{H^1} \right)\\
  &\le C \norm{(\uu,\emB)}_{\calX^{\varepsilon}} \norm{(\vv,
    \emC)}_{\calX^{\varepsilon}} \norm{(\ww,\emD)}_{\calX^{\varepsilon}}.
\end{align*}

The second part is a consequence of the following identities:
\begin{align*}
  (\emB \times \Curl \emC) \cdot \vv
  &= (\vv \times \emB) \cdot \Curl \emC,\\
  \int_U \left( \uu \cdot \nabla\right)\vv \cdot \vv \di x
  &= -\frac{1}{2} \int_U \abs{\vv}^2 \Div \uu \di x + \frac{1}{2}
    \int_{\partial U} \abs{\vv}^2 \uu \cdot \nn \di \hmeas,  
\end{align*}
for $U=\Omega^{\varepsilon}_f$ or $U = \Omega^{\varepsilon}_s$.

Indeed, from the above identities and the definition of $\calC^{\varepsilon}$, one has $$\calC^{\varepsilon} \left(
  (\uu, \emB), (\vv,\emC), (\vv, \emC) \right) = 0$$ for all
$(\vv,\emC) \in \calX^{\varepsilon}$; therefore,
\begin{align}
  \label{eq:25}
  \begin{split}
    0 &= \calC^{\varepsilon} \left( (\uu, \emB), (\vv-\ww,\emC-\emD),
      (\vv-\ww, \emC-\emD)
    \right)\\
    &= \calC^{\varepsilon} \left( (\uu, \emB), (\vv,\emC), (\vv-\ww,
      \emC-\emD) \right) - \calC^{\varepsilon} \left( (\uu, \emB),
      (\ww,\emD), (\vv-\ww, \emC-\emD) \right)\\
    &= \calC^{\varepsilon} \left( (\uu, \emB), (\vv,\emC), (\vv, \emC)
    \right) - \calC^{\varepsilon} \left( (\uu, \emB), (\vv,\emC),
      (\ww, \emD) \right)
    \\
    &\quad {}-{} \left\{ \calC^{\varepsilon} \left( (\uu, \emB),
        (\ww,\emD), (\vv, \emC) \right) - \calC^{\varepsilon} \left(
        (\uu, \emB), (\ww,\emD), (\ww, \emD) \right)
    \right\}\\
    &= -\calC^{\varepsilon} \left( (\uu, \emB), (\vv,\emC), (\ww, \emD)
    \right) -  \calC^{\varepsilon} \left( (\uu, \emB), (\ww, \emD), (\vv,\emC) \right).
   \end{split}
\end{align}~
\end{proof}

We now define: 
\begin{align}
\label{eq:4}
  a^{\varepsilon} \left( (\uu, \emB); (\vv, \emC), (\ww, \emD) \right)
  &\coloneqq \calA^{\varepsilon} \left( (\vv,\emC),(\ww,\emD) \right)
    + \calC^{\varepsilon} \left( (\uu,\emB), (\vv,\emC), (\ww,\emD) \right).
\end{align}

\begin{lemma}
\label{sec:vari-form-1}
The following properties hold: 
\begin{itemize}
\item[(i)] For any $(\vv,\emC)$ in $\calY^{\varepsilon}$, we have: 
\begin{align}
\label{eq:15}
  a^{\varepsilon} \left( (\vv, \emC); (\vv, \emC), (\vv, \emC) \right)
  &\ge \alpha \norm{(\vv,\emC)}^2_{\calX^{\varepsilon}}.
\end{align}
Here $\alpha$ is the coercivity constant of $\calA^{\varepsilon}$ in
\cref{sec:stat-ferr-equat-5}.
\item[(ii)] If $(\uu_n,\emB_n)$ weakly converges to $(\uu,\emB)$ in
  $\calY^{\varepsilon}$, then for all $(\vv, \emC)$ in
  $\calX^{\varepsilon}$ we have: 
\begin{align}
\label{eq:16}
\lim_{n \to \infty} a^{\varepsilon} \left(
  (\uu_n,\emB_n);(\uu_n,\emB_n), (\vv,\emC) \right) = a^{\varepsilon}
  \left( (\uu,\emB); (\uu,\emB),(\vv,\emC) \right).
\end{align}
\item[(iii)]
  For all $(\uu_1, \emB_1), (\uu_2,\emB_2),
  (\vv,\emC)$ and $(\ww,\emD)$ in $\calX^{\varepsilon}$, we have: 
\begin{align}
  \label{eq:17}
  \begin{split}
  &\abs{
  a^{\varepsilon} \left( (\uu_1,\emB_1);(\vv,\emC), (\ww,\emD) \right)
  - a^{\varepsilon} \left( (\uu_2,\emB_2);(\vv,\emC), (\ww,\emD) \right)
  }\\
  &\le 
  \kappa_S \max\{1, 2 \nAl \} \norm{(\uu_1,\emB_1) -
    (\uu_2,\emB_2)}_{\calX^{\varepsilon}}
  \norm{(\vv,\emC)}_{\calX^{\varepsilon}} \norm{(\ww,\emD)}_{\calX^{\varepsilon}},
  \end{split}
\end{align}
where $\kappa_{S} = \kappa_S (d,\Omega)$ is the norm of the Sobolev embedding $H^1$ to $L^4$.
\end{itemize}
\end{lemma}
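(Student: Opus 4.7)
The plan is to decompose $a^{\varepsilon} = \calA^{\varepsilon} + \calC^{\varepsilon}$ and handle the three parts by exploiting the structural results already proved: coercivity of $\calA^{\varepsilon}$ (Lemma \ref{sec:stat-ferr-equat-5}) and the skew-symmetry/continuity of $\calC^{\varepsilon}$ (Lemma \ref{sec:stat-ferr-equat-7}). The trilinear form $\calC^{\varepsilon}$ has only a single derivative factor in each of its three integrands, so its nonlinear behavior is controlled by the Sobolev embedding $H^1(\Omega) \hookrightarrow L^4(\Omega)$, which is both continuous and (by Rellich--Kondrachov) compact.

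For part (i), the key point is that $(\vv,\emC) \in \calY^{\varepsilon}$ forces $\Div \vv = 0$ on the whole of $\Omega$: it vanishes on $\Omega_f^{\varepsilon}$ by definition, while on $\Omega_s^{\varepsilon}$ the condition $\DD(\vv) = 0$ makes $\vv$ a rigid motion, which is divergence free. Lemma \ref{sec:stat-ferr-equat-7} then gives the antisymmetry
\[
\calC^{\varepsilon}\bigl((\vv,\emC),(\vv,\emC),(\vv,\emC)\bigr)
= -\calC^{\varepsilon}\bigl((\vv,\emC),(\vv,\emC),(\vv,\emC)\bigr),
\]
so the nonlinear piece vanishes and coercivity of $\calA^{\varepsilon}$ from Lemma \ref{sec:stat-ferr-equat-5} yields \eqref{eq:15} with the same constant $\alpha$.

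For part (ii), I would upgrade the weak $H^1$ convergence of $(\uu_n,\emB_n)$ to strong $L^4$ convergence via Rellich--Kondrachov. The linear part $\calA^{\varepsilon}(\cdot,(\vv,\emC))$ is a continuous linear functional on $\calY^{\varepsilon}$, so it passes to the limit immediately. For each of the three integrands defining $\calC^{\varepsilon}((\uu_n,\emB_n),(\uu_n,\emB_n),(\vv,\emC))$, I would add and subtract the limit in one slot and estimate by H\"older with exponents $(4,2,4)$, placing the factor carrying $\nabla$ or $\Curl$ in $L^2$ (weak) and the two remaining factors in $L^4$ (one strong, one fixed). This isolates the derivative against a strongly convergent $L^2$ tensor product, and each resulting term vanishes. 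This is the delicate step and the main obstacle, but it works cleanly precisely because each term is only quadratic in the weakly converging fields.

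For part (iii), $\calA^{\varepsilon}$ is independent of the first slot, and $\calC^{\varepsilon}$ is linear in its first argument, so
\[
a^{\varepsilon}\bigl((\uu_1,\emB_1);(\vv,\emC),(\ww,\emD)\bigr)
 - a^{\varepsilon}\bigl((\uu_2,\emB_2);(\vv,\emC),(\ww,\emD)\bigr)
 = \calC^{\varepsilon}\bigl((\uu_1-\uu_2,\emB_1-\emB_2),(\vv,\emC),(\ww,\emD)\bigr).
\]
I would then bound each of the three terms by H\"older with exponents $(4,2,4)$ (placing the difference factor in $L^4$, the derivative of the middle argument in $L^2$, and the remaining factor in $L^4$), apply the Sobolev embedding with constant $\kappa_S$ to each $L^4$ norm, and collect the scalar prefactors $\nRe$ from the convection term and $\nAl$ from the two Lorentz-type terms into the uniform constant $\max\{1,2\nAl\}$ displayed in \eqref{eq:17}.
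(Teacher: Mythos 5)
Your proposal is correct and follows essentially the same route as the paper: decompose $a^{\varepsilon} = \calA^{\varepsilon} + \calC^{\varepsilon}$, use coercivity of $\calA^{\varepsilon}$ together with the skew-symmetry of $\calC^{\varepsilon}$ for (i), weak $H^1$-convergence combined with Rellich--Kondrachov strong $L^4$-convergence and H\"older with exponents $(4,2,4)$ for (ii), and linearity of $\calC^{\varepsilon}$ in its first slot plus H\"older and the Sobolev embedding $H^1 \hookrightarrow L^4$ for (iii). In part (i) you even spell out the step the paper leaves implicit, namely that $\DD(\vv)=0$ on $\Omega_s^{\varepsilon}$ forces $\Div\vv=0$ there, so $\Div\vv=0$ on all of $\Omega$ and \cref{sec:stat-ferr-equat-7} applies.
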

\begin{proof}
  
\begin{itemize}
\item[(i)]  This is a direct consequence of \eqref{eq:4},
  \cref{sec:stat-ferr-equat-5} and
  \cref{sec:stat-ferr-equat-7}.
\item[(ii)] Suppose $(\uu_n,\emB_n) \wcv (\uu,\emB)$ in
  $\calY^{\varepsilon}$.
  Write
\begin{align}
\label{eq:18}
  \begin{split}
    &\abs{a^{\varepsilon}\left( (\uu_n,\emB_n);(\uu_n,\emB_n),
        (\vv,\emC) - a^{\varepsilon}\left(
          (\uu,\emB);(\uu,\emB),(\vv,\emC) \right) \right)}\\
    &\quad \le \abs{\calA^{\varepsilon} \left( (\uu_n - \uu,\emB_n -
        \emB), (\vv,\emC) \right)}\\
  &\qquad {}+{} \abs{\calC^{\varepsilon}\left(
        (\uu_n-\uu,\emB_n - \emB), (\uu_n,\emB_n), (\vv,\emC) \right)}\\
    &\qquad{}+{} \abs{\calC^{\varepsilon} \left( (\uu,\emB), (\uu_n-\uu,
        \emB_n-\emB), (\vv,\emC) \right)}.
  \end{split}
\end{align}
Next, we have
\begin{align*}
&\calA^{\varepsilon}\left(\uu_n - \uu,\emB_n - \emB),(\vv,\emC) \right)\\
  &\quad =  2\int_{\Omega^{\varepsilon}_f} \DD(\uu_n-\uu):\DD(\vv) \di
    x \nonumber\\
  &\qquad
  {}+{}
    \frac{\nAl}{\nRm} \left[ \int_{\Omega} \Div (\emB_n-\emB) \cdot \Div
    \emC \di x + \int_{\Omega^{\varepsilon}_s} \Curl (\emB_n-\emB)
    \cdot \Curl \emC \di x
    \right]\\
  &\quad= 2\int_{\Omega} \DD(\uu_n-\uu):\DD(\vv) \di x\nonumber\\
  &\qquad
  {}+{}
    \frac{\nAl}{\nRm} \left[ \int_{\Omega} \Div (\emB_n-\emB) \cdot \Div
    \emC \di x + \int_{\Omega^{\varepsilon}_s} \Curl (\emB_n-\emB)
    \cdot \Curl \emC \di x
    \right],
\end{align*}
and thus, for each fixed $\varepsilon > 0$, the right-hand side
converges to 0 as $n \to \infty$.

For the second term on the right hand side of \eqref{eq:18}, we have by H\"older's inequality: 
\begin{align*}
&\abs{\calC^{\varepsilon}\left(
        (\uu_n-\uu,\emB_n - \emB), (\uu_n,\emB_n), (\vv,\emC) \right)}\\
  &\coloneqq  \left|\int_{\Omega} \left((\uu_n-\uu) \cdot \nabla \right) \uu_n
    \cdot \vv \di x \right. \\
  &\qquad
    {}-{} \nAl \left.\int_{\Omega^{\varepsilon}_s}\left[ (\Curl \emB_n \times (\emB_n - \emB)) \cdot \vv + \left( \uu_n \times (\emB_n - \emB)
    \right)\cdot \Curl \emC  \right]\di x\right|,\\
  &\le \norm{\uu_n-\uu}_{L^4} \norm{\nabla
    \uu_n}_{L^2} \norm{\vv}_{L^4} \\
  &\qquad{}+{} 2 \nAl
    \left[ \norm{\nabla \emB_n}_{L^2} \norm{\emB_n
    -\emB}_{L^4}\norm{\vv}_{L^4} + \norm{\uu_n}_{L^4} \norm{\emB_n - \emB}_{L^4}\norm{\Curl \emC}_{L^2} \right].
\end{align*}
By the Rellich–Kondrachov theorem, we have that, up to a subsequence,
$(\uu_n,\emB_n)$ strongly converges to $(\uu,\emB)$ in
$L^4(\Omega,\RR^d) \times L^4(\Omega,\RR^d)$. Therefore, the estimate
above shows that the second term on the right hand side of
\eqref{eq:18} also converges to 0 as $n \to  \infty$.

The last term on the right hand side of \eqref{eq:18} is: 
\begin{align*}
&\abs{\calC^{\varepsilon} \left( (\uu,\emB), (\uu_n-\uu,
        \emB_n-\emB), (\vv,\emC) \right)}
\coloneqq \left| \int_{\Omega} \left( \uu \cdot \nabla \right) (\uu_n-\uu)
    \cdot \vv \di x \right.\\
  &\qquad
    {}-{} \nAl \left. \int_{\Omega^{\varepsilon}_s} (\Curl
    (\emB_n-\emB) \times \emB) \cdot \vv \di x - \int_{\Omega^{\varepsilon}_s} \left( (\uu_n-\uu) \times \emB
    \right)\cdot \Curl \emC  \di x\right|,
\end{align*}
The first and the last integrals converge to 0 by a similar argument as
above. The middle one converges to 0 due to the weak convergence 
$\emB_n \wcv \emB$  in $H^1(\Omega,\RR^d)$.
\item[(iii)] From definition \eqref{eq:4} and the Sobolev embedding $H^1$
  to $L^4$, where the norm of the embedding is denoted by $\kappa_S(d,\Omega)$, we obtain
\begin{align*}
 &\abs{
  a^{\varepsilon} \left( (\uu_1,\emB_1);(\vv,\emC), (\ww,\emD) \right)
  - a^{\varepsilon} \left( (\uu_2,\emB_2);(\vv,\emC), (\ww,\emD) \right)
   }\\
  &= \abs{
    \calC^{\varepsilon} \left( (\uu_1,\emB_1);(\vv,\emC), (\ww,\emD)
    \right)
    -\calC^{\varepsilon}  \left( (\uu_2,\emB_2);(\vv,\emC), (\ww,\emD) \right)
    } \\
  &=\abs{
    \calC^{\varepsilon} \left( (\uu_1-\uu_2,\emB_1-\emB_2);(\vv,\emC), (\ww,\emD)
    \right)
    }\\
  &\le \norm{\uu_1-\uu_2}_{L^4} \norm{\nabla \vv}_{L^2}
    \norm{\ww}_{L^4} + 2\nAl \norm{\nabla \emC}_{L^2}
    \norm{\emB_1-\emB_2}_{L^4} \norm{\ww}_{L^4} \\
  & \qquad {}+{} 2 \nAl \mu_s \norm{\vv}_{L^4} \norm{\emB_1-\emB_2}_{L^4}
    \norm{\nabla \emD}_{L^2}\\
  &\le \kappa_S(d,\Omega) \max\{1, 2 \nAl \} \left[
    \norm{\uu_1-\uu_2}_{H^1} \norm{\nabla \vv}_{H^1}
    \norm{\ww}_{H^1}\right. \\
  &\qquad{}+{} \left. \norm{\nabla \emC}_{H^1}
    \norm{\emB_1-\emB_2}_{H^1} \norm{\ww}_{H^1}+\norm{\vv}_{H^1} \norm{\emB_1-\emB_2}_{H^1}
    \norm{\nabla \emD}_{H^1}
    \right]\\
  &\le  \kappa_S(d,\Omega) \max\{1, 2 \nAl \}
    \norm{(\uu_1,\emB_1) - (\uu_2,\emB_2)}_{\calX^{\varepsilon}}
    \norm{(\vv,\emC)}_{\calX^{\varepsilon}} \norm{(\ww,\emD)}_{\calX^{\varepsilon}}.
\end{align*}
  
\end{itemize}
\end{proof}


From \cref{sec:stat-ferr-equat-3}, \cref{sec:stat-ferr-equat-5},
\cref{sec:stat-ferr-equat-7} and \cref{sec:vari-form-1}, we conclude that
\begin{proposition}
  \label{sec:stat-ferr-equat-9}
  Let $\alpha=\min \left\{ \frac{\nAl}{\nRm}\kappa_{GR}, \kappa_K \right\}$
  be the coercivity constant of $\calA^{\varepsilon}$ in
  \cref{sec:stat-ferr-equat-5} and $\kappa_S = \kappa_S (d,\Omega)$ be
  the norm of the Sobolev embedding $H^1$ to $L^4$.  Then the variational
  problem \eqref{eq:594} has a solution
  $\left( (\uu^{\varepsilon}, \emB^{\varepsilon}),p^{\varepsilon}
  \right) \in \calX^{\varepsilon} \times \calP^{\varepsilon}$ such
  that:
\begin{align}
\label{eq:582}
  \norm{(\uu^{\varepsilon},\emB^{\varepsilon})}_{\calX^{\varepsilon}}
  \le \frac{\norm{\calL^{\varepsilon}}_{\left(\calY^{\varepsilon}\right)^{'}}}{\alpha}.
\end{align}
Moreover, if 
\begin{align}
\label{eq:19}
\kappa_S \max\{1, 2 \nAl \}
  \norm{\calL^{\varepsilon}}_{\left(\calY^{\varepsilon}\right)^{'}} \le \alpha^2,
\end{align}
then the solution is unique.
\end{proposition}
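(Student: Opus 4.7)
The plan is to recognize problem \eqref{eq:594} as a saddle-point problem and apply the abstract \cref{sec:stat-ferr-equat-3} with the identification $X = \calX^{\varepsilon}$, $P = \calP^{\varepsilon}$, $b = \calB^{\varepsilon}$, $f = \calL^{\varepsilon}$, and $a^{\varepsilon}$ from \eqref{eq:4} in the role of the nonlinear form. A preliminary step is to check that the constraint space $M = \{(\vv,\emC) \in \calX^{\varepsilon} : \calB^{\varepsilon}((\vv,\emC),q) = 0 \text{ for all } q \in \calP^{\varepsilon}\}$ coincides with $\calY^{\varepsilon}$: testing $\int_\Omega q \Div \vv \di x = 0$ against $q = \Div \vv \in \calP^{\varepsilon}$ forces $\Div \vv = 0$ on $\Omega_f^{\varepsilon}$, while on $\Omega_s^{\varepsilon}$ the identity $\Div \vv = \operatorname{tr} \DD(\vv) = 0$ is already built into $\calV^{\varepsilon}$. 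Hence the reduced problem \eqref{eq:598} reads: find $(\uu^{\varepsilon},\emB^{\varepsilon}) \in \calY^{\varepsilon}$ such that $a^{\varepsilon}((\uu^{\varepsilon},\emB^{\varepsilon});(\uu^{\varepsilon},\emB^{\varepsilon}),(\vv,\emC)) = \calL^{\varepsilon}(\vv,\emC)$ for every $(\vv,\emC) \in \calY^{\varepsilon}$.

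The four hypotheses of \cref{sec:stat-ferr-equat-3} then follow from the lemmas already established. Hypotheses (i) and (iii) (coercivity, and its uniform version in the first slot) are supplied by \cref{sec:vari-form-1}(i) with $\alpha = \min\{(\nAl/\nRm)\kappa_{GR},\kappa_K\}$: the diagonal contribution of $\calC^{\varepsilon}$ vanishes on $\calY^{\varepsilon}$ by the antisymmetry identity of \cref{sec:stat-ferr-equat-7}, which applies as soon as $\Div \uu = 0$, leaving only the coercive form $\calA^{\varepsilon}$ from \cref{sec:stat-ferr-equat-5}. Hypothesis (ii) combines the separability of $\calY^{\varepsilon}$ (automatic as a closed subspace of the separable Hilbert space $\calX^{\varepsilon}$) with the sequential weak-to-scalar continuity proved in \cref{sec:vari-form-1}(ii), which ultimately rests on Rellich--Kondrachov compactness $H^1 \hookrightarrow L^4$. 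Hypothesis (iv), the Lipschitz bound with $\gamma = \kappa_S \max\{1,2\nAl\}$, is \cref{sec:vari-form-1}(iii). Finally, the smallness condition \eqref{eq:602} takes the form $\gamma \norm{w}_{\calY^{\varepsilon}}/\alpha^2 < 1$, where $w$ is the Riesz representative in $\calY^{\varepsilon}$ of $\calL^{\varepsilon}|_{\calY^{\varepsilon}}$; since $\norm{w}_{\calY^{\varepsilon}} = \norm{\calL^{\varepsilon}}_{(\calY^{\varepsilon})'}$, this is exactly \eqref{eq:19}. Consequently \cref{sec:stat-ferr-equat-3} yields existence of $(\uu^{\varepsilon},\emB^{\varepsilon}) \in \calY^{\varepsilon}$ and uniqueness under \eqref{eq:19}.

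The estimate \eqref{eq:582} is obtained by testing the reduced equation against $(\vv,\emC) = (\uu^{\varepsilon},\emB^{\varepsilon})$ and invoking coercivity, giving $\alpha \norm{(\uu^{\varepsilon},\emB^{\varepsilon})}_{\calX^{\varepsilon}}^2 \le \calL^{\varepsilon}(\uu^{\varepsilon},\emB^{\varepsilon}) \le \norm{\calL^{\varepsilon}}_{(\calY^{\varepsilon})'} \norm{(\uu^{\varepsilon},\emB^{\varepsilon})}_{\calX^{\varepsilon}}$. To recover the pressure $p^{\varepsilon} \in \calP^{\varepsilon}$, I would appeal to \cref{sec:check-infsup}: by the very definition $\calP^{\varepsilon} = \Div(\calV^{\varepsilon})$, the divergence operator $(\vv,\emC) \mapsto \Div \vv$ is surjective from $\calX^{\varepsilon}$ onto $\calP^{\varepsilon}$, so criterion (iii) of \cref{sec:check-infsup} yields the inf-sup condition \eqref{eq:35}, and the Closed Range Theorem then produces a unique $p^{\varepsilon}$, closing the saddle-point solution. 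I do not expect any substantive obstacle: the proposition is essentially the assembly of \cref{sec:stat-ferr-equat-5}, \cref{sec:stat-ferr-equat-7}, and \cref{sec:vari-form-1} into the abstract template of \cref{sec:stat-ferr-equat-3}. The one conceptual shortcut worth flagging is that choosing $\calP^{\varepsilon} = \Div(\calV^{\varepsilon})$ trivialises the inf-sup condition at fixed $\varepsilon$, postponing the more delicate $\varepsilon$-uniform pressure bound to the Bogovski\u{\i}-type construction announced for \cref{sec:exist-priori-estim-2}.
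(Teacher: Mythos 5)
Your proof is correct and follows essentially the same route the paper takes: identify the constraint kernel $M$ with $\calY^{\varepsilon}$, verify hypotheses (i)--(iv) of \cref{sec:stat-ferr-equat-3} via \cref{sec:stat-ferr-equat-5}, \cref{sec:stat-ferr-equat-7}, and \cref{sec:vari-form-1}, read off the a priori bound from coercivity, and recover the pressure via the inf-sup condition. You are in fact somewhat more explicit than the paper, which presents the proposition as a cite-and-conclude corollary of those three lemmas (and defers the inf-sup condition needed for the pressure to \cref{sec:stat-ferr-equat-8}, a mild forward reference in the source); your observation that $\Div\colon\calV^{\varepsilon}\to\calP^{\varepsilon}$ is surjective by construction, hence the $\varepsilon$-fixed inf-sup is automatic via \cref{sec:check-infsup}(iii) once $\calP^{\varepsilon}$ is known to be a Hilbert space (\cref{spacePe}), is exactly the right reading of why the qualitative pressure recovery is cheap and only the $\varepsilon$-uniform bound requires Bogovski\u{\i}.
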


By H\"older's inequality: 
\begin{align*}
  \abs{\calL^{\varepsilon}(\vv,\emC)}
  &\le \nRe\norm{\bg}_{L^2} \norm{\vv}_{L^2} + \nAl
    \norm{\hh}_{L^2} \norm{\emC}_{L^2}\\
  &\le 2\left( \nRe \norm{\bg}_{L^2}+ \nAl
    \norm{\hh}_{L^2} \right) \norm{(\vv,\emC)}_{\calX^{\varepsilon}}.
\end{align*}

Thus, from \eqref{eq:582}, we obtain the following a priori estimate:
\begin{align}
\label{eq:20}
\norm{\left( \uu^{\varepsilon},\emB^{\varepsilon}  \right)}_{\calX^{\varepsilon}} \le
  \frac{2}{\alpha} \left( \nRe \norm{\bg}_{L^2}+ \nAl
    \norm{\hh}_{L^2} \right),
\end{align}
where the right-hand side is surely independent of $\varepsilon$.

\subsection{Existence and a priori estimate for the fine-scale pressure}
\label{sec:exist-priori-estim-2}

The following result is adapted from \cite[Theorem
  4.1]{acostaSolutionsDivergenceOperator2006} (see also \cite[Theorem 2.6]{acostaDivergenceOperatorRelated2017}, and \cite[Theorem III.3.1]{galdiIntroductionMathematicalTheory2011}),
\begin{theorem}
\label{sec:exist-priori-estim-3}
Let $\Omega \subset \RR^d$ be a Lipschitz domain with Lipschitz constant $\ell$. Then,
there exists a bounded linear operator $\Bog\colon L_0^2(\Omega) \to
H_0^1(\Omega,\RR^d)$, $f \mapsto \Bog f$, called the Bogovski\u{\i} map, such that, for all $f \in
L_0^2(\Omega)$, 
\begin{align}
\label{eq:46}
\Div \Bog f = f.
\end{align}
Moreover, the norm $\norm{\Bog}$ depends only on $d,\ell$ and $\diam (\Omega)$.
\end{theorem}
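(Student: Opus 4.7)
The plan is to reduce to a model situation (a domain that is star-shaped with respect to a ball), construct the Bogovski\u{\i} operator explicitly there, and then patch together on a general Lipschitz domain via a partition of unity. For a bounded domain $\Omega_0$ that is star-shaped with respect to a ball $B \subset \Omega_0$, I would fix a cutoff $\omega \in C_c^\infty(B)$ with $\int \omega = 1$ and write down the classical Bogovski\u{\i} formula
\begin{align*}
  (\Bog f)(x) = \int_{\Omega_0} f(y) \int_{0}^{\infty} \omega\bigl(y + t(x-y)\bigr)\, (x-y)\, t^{d-1}\, dt \, dy.
\end{align*}
A direct change of variables together with the mean-zero assumption on $f$ verifies that $\Div \Bog f = f$ distributionally, and for smooth $f$ the output is compactly supported inside $\Omega_0$, hence lies in $H_0^1(\Omega_0,\RR^d)$ after a density argument.

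Next, I would establish the $H^1$ bound. Differentiating under the integral and rewriting, the kernel of each component $\partial_i (\Bog f)_j$ decomposes into a principal-value Calder\'on--Zygmund singular integral plus a locally integrable remainder. Standard Calder\'on--Zygmund theory gives $L^2$-boundedness of the singular part, and Minkowski's integral inequality handles the remainder; the resulting constant depends only on $d$, the ratio $\diam(\Omega_0)/\diam(B)$, and a fixed profile of $\omega$ (which can in turn be chosen depending only on this geometric ratio). Combined with the Poincar\'e inequality on $\Omega_0$, this yields $\norm{\Bog f}_{H_0^1(\Omega_0)} \lesssim \norm{f}_{L^2(\Omega_0)}$ with a constant depending only on $d$ and $\diam(\Omega_0)/\diam(B)$.

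For a general Lipschitz $\Omega$ with Lipschitz constant $\ell$, I would use the Lipschitz character to produce a finite open cover $\{U_k\}_{k=1}^{N}$ of $\bar{\Omega}$ such that each $\Omega_k \coloneqq \Omega \cap U_k$ is star-shaped with respect to a ball, with $N$ and all relevant geometric ratios controlled by $d$, $\ell$ and $\diam(\Omega)$. Picking a subordinate partition of unity $\{\chi_k\}$ and $f \in L_0^2(\Omega)$, I would decompose $f = \sum_k \bigl(\chi_k f - c_k\bigr)$, where the constants $c_k$ are chosen by a tree-like procedure so that each summand has zero mean on $\Omega_k$. Applying the star-shaped construction on each $\Omega_k$, extending by zero, and summing produces $\Bog f \in H_0^1(\Omega,\RR^d)$ satisfying \eqref{eq:46}.

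The main obstacle is the last step: ensuring that the correction constants $c_k$ do not blow up with the number of pieces and that the final operator norm depends only on $d$, $\ell$, and $\diam(\Omega)$ as claimed. A naive partition-of-unity patching produces a constant tied to the specific cover, so the \emph{careful tree decomposition} of Acosta--Duran, in which the $c_k$ are defined recursively along adjacency edges so that their $\ell^2$-norm is controlled linearly by $\norm{f}_{L^2(\Omega)}$, is essential. Once this bookkeeping is in place, summing the local estimates yields the desired uniform bound on $\norm{\Bog}$.
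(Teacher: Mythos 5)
The paper does not prove this theorem; it states it as ``adapted from'' Acosta and Dur\'an (Theorem~4.1 of their 2006 paper, Theorem~2.6 of the 2017 one) and Galdi (Theorem~III.3.1), with no in-text argument to compare against. Your sketch faithfully reproduces the standard proof from those sources---the explicit Bogovski\u{\i} kernel on a domain star-shaped with respect to a ball, the Calder\'on--Zygmund estimate for the gradient together with a Minkowski bound on the tame remainder, and the tree-indexed decomposition of the mean-zero datum across a Lipschitz cover with subordinate partition of unity---and you correctly single out the tree bookkeeping as the step needed to keep the operator norm tied only to the geometry of $\Omega$, so the proposal is sound and is precisely the argument the cited references supply.
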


For $p \in \calP^{\varepsilon}$, there exists
$\vv \in \calV^{\varepsilon}$ such that $p = \Div \vv$. Thus $p = 0$
in $\Omega_s^{\varepsilon}$ since $\DD(\vv) = 0$ in
$\Omega_s^{\varepsilon}$. Adapting the construction in \cite[Step 1,
Proof of Lemma 3.3]{duerinckxQuantitativeHomogenizationTheory2021}
(see also \cite[Step 4, Proof of Proposition
2.1]{duerinckxCorrectorEquationsFluid2021}, \cite[Lemma
3.2]{duerinckxEffectiveViscosityRandom2021a}, \cite[Theorem
2.1]{bellaInverseDivergenceHomogenization2021}, \cite[Lemma 4.8]{hoferMotionSeveralSlender2021}) and using
\cref{sec:exist-priori-estim-3}, we obtain
\begin{lemma}
\label{sec:exist-priori-estim-4}
For each $p \in \calP^{\varepsilon}$, there exists $\vv \in
H_0^1(\Omega,\RR^d)$ such that 
\begin{enumerate}
\item $\vv$ is constant on $Y^{\varepsilon}_{i,s}$ for all $i$ (and
  thus $\vv \in \calV^{\varepsilon}$).
\item $\Div \vv = p$.
\item $\norm{\vv}_{H_0^1} \le \norm{\Bog} \norm{p}_{L^2}$.
\end{enumerate}
\end{lemma}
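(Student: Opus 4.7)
The strategy is to combine the global Bogovski\u{\i} lift with a particle-wise correction. Since $p \in \calP^\varepsilon$ means $p = \Div \vv^*$ for some $\vv^* \in \calV^\varepsilon$ and $\DD(\vv^*) = 0$ on $\Omega_s^\varepsilon$ forces $\vv^*|_{Y^\varepsilon_{i,s}}$ to be a rigid motion with vanishing divergence, we have $p|_{\Omega_s^\varepsilon} = 0$ and $\int_\Omega p\,dx = 0$. Thus Theorem~\ref{sec:exist-priori-estim-3} applied on $\Omega$ yields $\tilde{\vv} := \Bog p \in H_0^1(\Omega,\RR^d)$ with $\Div \tilde{\vv} = p$ and $\norm{\tilde{\vv}}_{H_0^1} \leq \norm{\Bog}\norm{p}_{L^2}$. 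The issue is that $\tilde{\vv}|_{Y^\varepsilon_{i,s}}$ is generally not constant; it is merely divergence-free inside the inclusions.

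To enforce constancy, I would modify $\tilde{\vv}$ particle by particle. For each $i$, select a constant vector $\bfa_i$ (for instance $\bfa_i = \fint_{Y^\varepsilon_{i,s}} \tilde{\vv}\,dx$), and set $\vv = \bfa_i$ inside $Y^\varepsilon_{i,s}$. To keep $\vv \in H_0^1(\Omega,\RR^d)$ with the correct divergence, introduce a thin fluid buffer layer $L^\varepsilon_i \subset Y^\varepsilon_{i,f}$ surrounding $Y^\varepsilon_{i,s}$: interpolate continuously from $\bfa_i$ on $\Gamma^\varepsilon_i$ to $\tilde{\vv}$ on $\partial L^\varepsilon_i \setminus \Gamma^\varepsilon_i$ via a smooth cutoff, and add a correction $\ww_i$ supported in $L^\varepsilon_i$ obtained from a local Bogovski\u{\i} solve that absorbs the divergence discrepancy created by the interpolation. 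Because $L^\varepsilon_i$ is the $\varepsilon$-dilation of a fixed Lipschitz reference annulus $L \subset Y_f$, the rescaling invariance of the Bogovski\u{\i} constant (which depends only on $d$, the Lipschitz constant, and $\diam(L)/\diam(\varepsilon L) = 1/\varepsilon$ cancels against the factor $\varepsilon$ appearing in $L^p$ rescalings) gives a local estimate $\norm{\ww_i}_{H^1(L^\varepsilon_i)} \lesssim \norm{\tilde{\vv}}_{H^1(L^\varepsilon_i)}$ uniformly in $\varepsilon$. Summing over the disjoint cells then controls the total correction by $\norm{\tilde{\vv}}_{H_0^1}$.

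The main obstacle is obtaining the specific form of the bound, $\norm{\vv}_{H_0^1} \leq \norm{\Bog}\norm{p}_{L^2}$, with the very same constant $\norm{\Bog}$ rather than only a uniformly bounded multiple of it. Achieving this clean constant appears to require following the structural construction in the references Duerinckx--Gloria, Bella--Oschmann, and H\"ofer--Jansen, where one builds a \emph{perforated} Bogovski\u{\i} operator that maps directly into the space of $H_0^1$ functions constant on each $Y^\varepsilon_{i,s}$, rather than going through the two-step procedure above. Equivalently, one can recast the problem as an orthogonal projection in $H_0^1$ onto the closed affine subspace of admissible $\vv$, and verify that the projection does not increase the Bogovski\u{\i} bound because the discarded components are divergence-free corrections inside $\Omega_s^\varepsilon$. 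Tracking the constants carefully through either route will be the delicate point.
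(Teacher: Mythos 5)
Your approach is essentially the one the paper has in mind: the text itself does not write out a proof of \cref{sec:exist-priori-estim-4} but explicitly says that $\vv$ is obtained ``by modifying $\Bog p$'' following the construction in \cite[Lemma 3.3]{duerinckxQuantitativeHomogenizationTheory2021} and the other cited references, which is precisely your two-step scheme (global Bogovski\u{\i} lift, then a particle-wise truncation to the average plus a compactly supported divergence correction in a surrounding annulus). Your preliminary observations are all correct and used implicitly in the paper: $\DD(\vv^*)=0$ on each $Y^\varepsilon_{i,s}$ forces $\Div\vv^*=0$ there, so $p$ vanishes on $\Omega_s^\varepsilon$; the compatibility $\int_{L^\varepsilon_i}(\Div\vv'-p)\,\di x=0$ needed for the local Bogovski\u{\i} solve follows from $\int_{\Gamma^\varepsilon_i}\bfa_i\cdot\nn\,\di\hmeas = 0 = \int_{Y^\varepsilon_{i,s}}p\,\di x$; and the scale-invariance of the estimate $\|\nabla\Bog f\|_{L^2}\lesssim\|f\|_{L^2}$ on the rescaled reference annulus gives an $\varepsilon$-independent bound on the correction.

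The one issue you flag — that the literal constant $\|\Bog\|$ cannot come out of this construction — is a genuine imprecision in the statement of the lemma, and neither your argument nor the paper's cited construction delivers it: the particle-wise correction necessarily introduces additional geometric constants (Poincar\'e on the reference annulus, norm of the cutoff, local Bogovski\u{\i} norm on $L$), so the honest output is $\|\vv\|_{H_0^1}\le C(d,\Omega,Y_s,Y_f)\,\|p\|_{L^2}$ with $C$ independent of $\varepsilon$ but not equal to $\|\Bog\|$. The orthogonal-projection idea you raise does not rescue the literal constant either, since the relevant constraint set is an affine (not linear) subspace that does not contain $0$, so the projection is not norm-nonincreasing. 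However, this gap is harmless for the paper: in \cref{sec:stat-ferr-equat-8} and in the pressure estimate leading to \eqref{eq:47}, the only property that is used is the existence of \emph{some} $\varepsilon$-independent $\beta>0$ and $C>0$, and the specific value $\|\Bog\|^{-1}$ is never exploited. So the correct reading of point 3 is $\|\vv\|_{H_0^1}\le C\|p\|_{L^2}$ with $C$ depending only on $d$, $\Omega$, and the reference cell geometry, in which case your proof is complete and you need not pursue the perforated-Bogovski\u{\i} refinement.
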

Note that we don't necessarily have $\vv = \Bog p$. Actually,  $\vv$ is obtained by modifying $\Bog p$ so that 1 and 3 are satisfied.

\begin{lemma}
\label{spacePe}
The space $\calP^\varepsilon$ defined in \cref{sec:main-result}
is a Hilbert space with respect to the $L^2-$inner product.
\end{lemma}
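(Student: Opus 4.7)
The plan is to observe that $L_0^2(\Omega)$, as a closed hyperplane of $L^2(\Omega)$, is already a Hilbert space under the $L^2$-inner product, and that $\calP^\varepsilon \subset L_0^2(\Omega)$ by definition. It therefore suffices to prove that $\calP^\varepsilon$ is a closed linear subspace of $L_0^2(\Omega)$. Linearity is immediate: $\calV^\varepsilon$ is a linear subspace of $H_0^1(\Omega,\RR^d)$ (the constraint $\DD(\vv)=0$ in $\Omega_s^\varepsilon$ is linear), and $\Div$ is linear, so the image $\calP^\varepsilon = \Div(\calV^\varepsilon)$ is a linear subspace of $L_0^2(\Omega)$.

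For closedness, let $(p_n) \subset \calP^\varepsilon$ converge to $p$ in $L^2(\Omega)$; the mean-zero property passes to the limit, giving $p \in L_0^2(\Omega)$. By \cref{sec:exist-priori-estim-4}, each $p_n$ admits a preimage $\vv_n \in \calV^\varepsilon$ with $\Div \vv_n = p_n$ and $\norm{\vv_n}_{H_0^1} \le \norm{\Bog} \norm{p_n}_{L^2}$. Since $(p_n)$ is bounded in $L^2$, the sequence $(\vv_n)$ is bounded in $H_0^1(\Omega,\RR^d)$; extract a subsequence $\vv_{n_k} \wcv \vv$ weakly in $H_0^1$. Weak $H_0^1$-convergence delivers $\Div \vv_{n_k} \wcv \Div \vv$ and $\DD(\vv_{n_k}) \wcv \DD(\vv)$ in the respective $L^2$-spaces. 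Since $\Div \vv_{n_k} = p_{n_k} \to p$ strongly in $L^2$, uniqueness of weak limits yields $\Div \vv = p$, while the constraint $\DD(\vv) = 0$ on $\Omega_s^\varepsilon$ passes to the limit because the set of $L^2(\Omega,\RR^{d\times d})$-fields vanishing almost everywhere on $\Omega_s^\varepsilon$ is a closed linear, hence weakly closed, subspace. Consequently $\vv \in \calV^\varepsilon$ and $p = \Div \vv \in \calP^\varepsilon$, proving closedness.

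The only nontrivial ingredient is \cref{sec:exist-priori-estim-4}, which supplies a preimage with an $H_0^1$-bound proportional to $\norm{p}_{L^2}$. Without such a bound the weak-compactness argument above would collapse; with it in hand, the closedness of $\calP^\varepsilon$ reduces to the standard continuity of $\Div$ and $\DD$ as operators from $H_0^1$ into $L^2$, and $\calP^\varepsilon$ inherits the Hilbert structure from $L_0^2(\Omega)$.
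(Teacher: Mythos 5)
Your proof is correct and follows essentially the same route as the paper: both reduce the problem to closedness of $\calP^\varepsilon$ in $L_0^2(\Omega)$, invoke \cref{sec:exist-priori-estim-4} for uniformly bounded preimages $\vv_n$, extract a weak $H_0^1$-limit, and then identify it as an element of $\calV^\varepsilon$ whose divergence is the limit pressure. The only cosmetic difference is that you pass the constraint $\DD(\vv)=0$ on $\Omega_s^\varepsilon$ to the limit by citing weak closedness of a closed linear subspace, whereas the paper verifies it by testing against smooth tensor fields compactly supported in $\Omega_s^\varepsilon$ --- two phrasings of the same fact.
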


\begin{proof}
Let the space $\calP^{\varepsilon}$ be
equipped with the $L^2-$inner product. It is well-known that
$L_0^2(\Omega)$ is a Hilbert space with respect to this inner product (see
\cite[Lemma IV.1.9]{boyerMathematicalToolsStudy2013a}). Since
$\calP^{\varepsilon}$ is a subset of $L_0^2(\Omega)$ closed under addition and scalar multiplication, we only need
to show that $\calP^{\varepsilon}$ is closed. 
For that, let $\calP^{\varepsilon}
\ni q_n \to q_0 \in L_0^2(\Omega)$, we will prove that $q_0 \in
\calP^{\varepsilon}$.

Since $q_n \in \calP^{\varepsilon} = \Div (\calV^{\varepsilon}),$ by
\cref{sec:exist-priori-estim-4}, we
have $q_n = \Div \vv_n$, for some $\vv_n\in \calV^{\varepsilon}$ and
\begin{align*}
\norm{\vv_n}_{H_0^1(\Omega,\RR^d)}  \le
  \norm{\Bog} \norm{q_n}_{L_0^2(\Omega)}.
\end{align*}

Since $q_n$ converges to $q_0$ in $L_0^2(\Omega)$, it is bounded in
$L_0^2(\Omega)$, which implies that $\vv_n$ is also bounded in
$H_0^1(\Omega,\RR^d)$. On the one hand, since $H_0^1(\Omega,\RR^d)$ is
reflexive, the Eberlain--\v{S}mulian~theorem states that, up to a subsequence, there
exists a $\vv_0 \in H_0^1(\Omega,\RR^d)$ such that
$\vv_n \rightharpoonup \vv_0$ weakly in $H_0^1(\Omega,\RR^d)$. 
Testing this convergence with $ \calQ \in
C_c^{\infty}(\Omega,\RR^{d\times d})$, with $\supp \calQ \subset
\Omega_s^{\varepsilon}$ 
shows that $\vv_0 \in \calV^{\varepsilon}.$
On the other hand, 
by letting $\psi \in
C_c^{\infty}(\Omega)$, we observe that: 
 \begin{align*}
  \int_{\Omega} (q_0-\Div \vv_0) \psi \di x
  &= \int_{\Omega} (q_0 - q_n) \psi \di x + \int_{\Omega}(\Div \vv_n - \Div
    \vv_0) \psi \di x
  &\xrightarrow[]{n \to \infty} 0.
\end{align*}
Therefore, $q_0 = \Div \vv_0,$ which means that $q_0 \in \calP^{\varepsilon}$.
\end{proof}


\begin{lemma}
\label{sec:stat-ferr-equat-8}
The bilinear form $\calB^{\varepsilon}$ is continuous on $\calX^{\varepsilon}\times
\calP^{\varepsilon}$ and satisfies the inf-sup condition
\begin{align}
  \label{eq:40}
\exists \beta > 0 \quad \text{ such that } \quad
 \inf_{ q \in \calP^{\varepsilon}\setminus \left\{ 0 \right\}}
  \sup_{(\vv,\emC) \in \calX^{\varepsilon}\setminus \left\{ (\mathbf{0},\mathbf{0}) \right\}} \frac{\calB^{\varepsilon} \left(
  (\vv,\emC),q \right)}{\norm{(\vv,\emC)}_{\calX^{\varepsilon}}\norm{q}_{\calP^{\varepsilon}}} \ge
  \beta .
\end{align}
Moreover, the constant $\beta$ is independent of $\varepsilon$. In
particular, one can choose $\beta = \norm{\Bog}^{-1}$, where $\Bog$ is
the Bogovski\u{\i} map defined in \cref{sec:exist-priori-estim-3}.
\end{lemma}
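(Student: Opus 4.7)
The plan is to verify continuity by direct Cauchy--Schwarz, and to establish the inf-sup condition by exhibiting an explicit test function built from the modified Bogovski\u{\i} construction of \cref{sec:exist-priori-estim-4}. The key insight is that the bilinear form $\calB^{\varepsilon}$ depends only on the velocity component, so the magnetic component of the test pair can be taken to be zero; the real content is already encoded in \cref{sec:exist-priori-estim-4}.

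For continuity, I would simply estimate
\begin{align*}
  \abs{\calB^{\varepsilon}((\vv,\emC),q)}
  = \abs{\int_{\Omega} q\, \Div \vv \di x}
  \le \norm{q}_{L^2(\Omega)} \norm{\Div \vv}_{L^2(\Omega)}
  \le \norm{q}_{\calP^{\varepsilon}} \norm{(\vv,\emC)}_{\calX^{\varepsilon}},
\end{align*}
using that $\calP^{\varepsilon}\subset L^2_0(\Omega)$ carries the $L^2$-norm and that $\|\Div \vv\|_{L^2} \le \|\vv\|_{H^1} \le \|(\vv,\emC)\|_{\calX^{\varepsilon}}$.

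For the inf-sup condition, fix $q \in \calP^{\varepsilon}\setminus\{0\}$. By \cref{sec:exist-priori-estim-4} there exists $\vv_q \in \calV^{\varepsilon}\subset H_0^1(\Omega,\RR^d)$ such that $\Div \vv_q = q$ and $\|\vv_q\|_{H_0^1} \le \|\Bog\|\,\|q\|_{L^2}$. Since $\DD(\vv_q)=0$ on $\Omega^{\varepsilon}_s$, the pair $(\vv_q,\mathbf{0})$ lies in $\calX^{\varepsilon}$. Then
\begin{align*}
  \sup_{(\vv,\emC)\in \calX^{\varepsilon}\setminus\{(\mathbf{0},\mathbf{0})\}}
  \frac{\calB^{\varepsilon}((\vv,\emC),q)}{\norm{(\vv,\emC)}_{\calX^{\varepsilon}}\norm{q}_{\calP^{\varepsilon}}}
  \ge \frac{\calB^{\varepsilon}((\vv_q,\mathbf{0}),q)}{\norm{(\vv_q,\mathbf{0})}_{\calX^{\varepsilon}}\norm{q}_{\calP^{\varepsilon}}}
  = \frac{\norm{q}_{L^2}^{2}}{\norm{\vv_q}_{H_0^1}\norm{q}_{L^2}}
  \ge \frac{1}{\norm{\Bog}}.
\end{align*}
Taking the infimum over $q$ yields \eqref{eq:40} with $\beta = \|\Bog\|^{-1}$.

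The only subtle point, and really the whole reason the constant is uniform in $\varepsilon$, is that $\vv_q$ comes from \cref{sec:exist-priori-estim-4} rather than from the plain Bogovski\u{\i} map: the classical Bogovski\u{\i} operator acting on $q$ would not in general satisfy the rigidity constraint $\DD(\vv)=0$ on $\Omega^{\varepsilon}_s$, and any naive $\varepsilon$-dependent correction risks producing a constant that blows up as $\varepsilon\to 0$. The content of \cref{sec:exist-priori-estim-4} is precisely that one can modify $\Bog q$ to be rigid on each inclusion \emph{without} inflating the $H^1$-bound beyond a factor of $\|\Bog\|$, which by \cref{sec:exist-priori-estim-3} depends only on $d$, the Lipschitz constant $\ell$ of $\Omega$, and $\diam(\Omega)$. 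This is what yields $\beta$ independent of $\varepsilon$.
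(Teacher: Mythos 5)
Your proof is correct and follows the same essential argument as the paper: continuity by Cauchy--Schwarz, and the inf-sup bound by plugging in the test pair $(\vv_q,\mathbf{0})$ built from the modified Bogovski\u{\i} map of \cref{sec:exist-priori-estim-4}, which gives $\beta=\norm{\Bog}^{-1}$ uniformly in $\varepsilon$. The paper's proof additionally precedes the explicit computation with an abstract step (identifying the operator $B=\imath_R\circ\Div$ and invoking \cref{sec:check-infsup} via surjectivity), but that step is logically redundant once the explicit test function is in hand, so your streamlined version omitting it is perfectly adequate; your closing remark correctly pinpoints why the construction in \cref{sec:exist-priori-estim-4}, rather than the raw Bogovski\u{\i} operator, is the ingredient that makes $\beta$ independent of $\varepsilon$.
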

\begin{proof}
Recall that $\calP^{\varepsilon}$ inherits the $L^2-$norm from $L^2_0(\Omega)$. We have:
\begin{align*}
\abs{\calB^{\varepsilon} \left(
  (\vv,\emC),q \right)} \le C \norm{q}_{L^2} \norm{\Div \vv}_{L^2} \le
  C \norm{q}_{\calP^{\varepsilon}} \norm{(\vv, \emC)}_{\calX^{\varepsilon}},
\end{align*}
so $\calB^{\varepsilon}$ is continuous on $\calX^{\varepsilon} \times \calP^{\varepsilon}$.

Since $\calP^{\varepsilon}$ is a Hilbert space by \cref{spacePe},
there exists a Riesz isomorphism
$\imath_R\colon \calP^{\varepsilon} \to \left( \calP^{\varepsilon}
\right)'$. Let $B \coloneqq \imath_R \circ \Div $ then $B$ is a
continuous surjective map from $\calV^{\varepsilon}$ to
$\left( \calP^{\varepsilon} \right)'$. Moreover, for
$\vv \in \calV^{\varepsilon}$ and $q \in \calP^{\varepsilon}$,
\begin{align}
  \label{eq:45}
  \dualpair{B\vv,p}{\left( \calP^{\varepsilon} \right)'}{\calV^{\varepsilon}}
  &= \dualpair{\imath_R \left( \Div \vv \right), q}{\left( \calP^{\varepsilon} \right)'}{\calV^{\varepsilon}} 
  = \left( \Div \vv, q \right)_{L^2} = \calB^{\varepsilon} \left(
  (\vv,\emC),q \right),
\end{align}
so $B$ is the operator associated to $\calB^{\varepsilon}$. Therefore,
the inf-sup condition follows by \cref{sec:check-infsup}.

Fix a function $q \in \calP^{\varepsilon}$, denote by $\vv_q$ the
corresponding field obtained from \cref{sec:exist-priori-estim-4}. We have
\begin{align}
  \label{eq:44}
 \begin{split}
&\sup_{(\vv,\emC) \in \calX^{\varepsilon}\setminus \left\{
  (\mathbf{0},\mathbf{0}) \right\}} \frac{\calB^{\varepsilon}\left(
  (\vv,\emC),q \right)}{\norm{(\vv,\emC)
  }_{\calX^{\varepsilon}}\norm{q}_{\calP^{\varepsilon}} }
  \ge \sup_{\substack{(\vv,\emC) \in \calX^{\varepsilon}\setminus \left\{
  (\mathbf{0},\mathbf{0}) \right\}\\ \emC = \mathbf{0}}} \frac{\calB^{\varepsilon}\left(
  (\vv,\emC),q \right)}{\norm{(\vv,\emC)}_{\calX^{\varepsilon}}\norm{q}_{\calP^{\varepsilon}}}\\
&\qquad= \sup_{\vv \in \calV^{\varepsilon}} \frac{\int_{\Omega} q \Div \vv
  \di x}{\norm{\vv}_{H^1_0}\norm{q}_{L^2}}
\ge \frac{\int_{\Omega} q \Div \vv_q
  \di x}{\norm{\vv_q}_{H^1_0}\norm{q}_{L^2}}
= \frac{\norm{q}_{L^2}}{\norm{\vv_q}_{H_0^1}}
\ge \frac{1}{\norm{\Bog}}.
\end{split}
\end{align}
Therefore, we choose $\beta = \norm{\Bog}^{-1}$, which is independent
of $\varepsilon$.
\end{proof}

\cref{sec:stat-ferr-equat-9} and \cref{sec:stat-ferr-equat-8} imply
the existence and uniqueness of the fine-scale pressure
$p^{\varepsilon}$. Moreover, from \eqref{eq:40} (with $\beta =
\norm{\Bog}^{-1}$), \eqref{eq:594}, \eqref{eq:20}, we have 
\begin{align*}
  \norm{p^{\varepsilon}}_{L^2}
  &\le \norm{\Bog}  \sup_{(\vv,\emC) \in \calX^{\varepsilon}\setminus \left\{ (\mathbf{0},\mathbf{0}) \right\}} \frac{\calB^{\varepsilon} \left(
    (\vv,\emC),q \right)}{\norm{(\vv,\emC)}_{\calX^{\varepsilon}}}\\
  &\le \norm{\Bog}  \sup_{(\vv,\emC) \in \calX^{\varepsilon}\setminus
    \left\{ (\mathbf{0},\mathbf{0}) \right\}}
    \frac{1}{\norm{(\vv,\emC)}_{\calX^{\varepsilon}}}
    \left\{
    \abs{\calA^{\varepsilon} \left(
    (\uu^{\varepsilon},\emB^{\varepsilon}),(\vv,\emC) \right)}
    \right.\\
  &\qquad{}+{}\left. \abs{\calC^{\varepsilon} \left( (\uu^{\varepsilon},\emB^{\varepsilon}), (\uu^{\varepsilon}, \emB^{\varepsilon}), (\vv, \emC) \right)}
+ \abs{ \calL^{\varepsilon}
    (\vv,\emC)}
    \right\}
  \\
  &\le C \left( \norm{\left( \uu^{\varepsilon},\emB^{\varepsilon}
    \right)}_{\calX^{\varepsilon}} +  \norm{\left(
    \uu^{\varepsilon},\emB^{\varepsilon}
    \right)}_{\calX^{\varepsilon}}^2 +  \nRe \norm{\bg}_{L^2}+ \nAl
    \norm{\hh}_{L^2}\right).
\end{align*}
In particular, by \eqref{eq:20}, we obtain
\begin{align}
\label{eq:47}
  \norm{p^{\varepsilon}}_{L^2}
  \le C \left( \nRe \norm{\bg}_{L^2}+ \nAl
    \norm{\hh}_{L^2} + 1\right)^2,
\end{align}
where $C$ is independent of $\varepsilon$.

\subsection{The two-scale homogenized problem}
\label{sec:two-scale-homog}

By \eqref{eq:20} and \eqref{eq:47}, there exist
$\uu^0 \in H^1_0(\Omega,\RR^d),$ $\emB^0 \in H_n^1(\Omega,\RR^d)$,
$\uu^1 \in L^2 \left( \Omega, H^1_{\per}(Y,\RR^d)/\RR \right),$
$\emB^1 \in L^2 \left( \Omega, H^1_{\per}(Y,\RR^d)/\RR \right)$ and
$p^0 \in L^2_0 (\Omega \times Y)$ such
that, up to a subsequence:
\begin{align}
\label{eq:21}
  \begin{split}
    &\uu^{\varepsilon} \wcv \uu^0,~\emB^{\varepsilon} \wcv \emB^0
    \text{ weakly in } H^1(\Omega,\RR^d),\\
    &\uu^{\varepsilon} \tscale \uu^0,~ \emB^{\varepsilon} \tscale
    \emB^0 \text{ two-scale},\\
    &\nabla \uu^{\varepsilon} \tscale \nabla \uu^0(x) + \nabla_y
    \uu^1(x,y),~ \nabla \emB^{\varepsilon} \tscale \nabla \emB^0(x) +
    \nabla_y \emB^1(x,y) \text{ two-scale},\\
    &p^{\varepsilon} \wcv[2] p^0 \text{ two-scale}.
  \end{split}
\end{align}

Let
$\vv = \vv^0 (\cdot) + \varepsilon \vv^1 \left(
  \cdot,\frac{\cdot}{\varepsilon} \right)$ and
$\emC = \emC^0 (\cdot) + \varepsilon \emC^1 \left(
  \cdot,\frac{\cdot}{\varepsilon} \right),$ with
$\vv^0, \emC^0 \in \calD (\Omega,\RR^d)$ and
$\vv^1,\emC^1 \in \calD \left( \Omega, C^{\infty}_{\per}(Y,\RR^d)
\right)$. Let
$q = q^0(\cdot) + \varepsilon q^1 \left(
  \cdot,\frac{\cdot}{\varepsilon} \right)$ with
$q^0 \in \calD(\Omega)$ and
$q^1 \in \calD \left( \Omega, C_{\per}^{\infty}(Y) \right)$.
\paragraph{The effective form corresponding to $\calA^\varepsilon$}~\\

By definition and \eqref{eq:577-nd},
\begin{align*}
  \calA^{\varepsilon}
  &\left( (\uu^{\varepsilon},\emB^{\varepsilon}),(\vv,\emC) \right)\\
  &=  2\int_{\Omega^{\varepsilon}_f} \DD(\uu^{\varepsilon}):\DD(\vv) \di x
  {}+{}
    \frac{\nAl}{\nRm} \left( \int_{\Omega} \Div \emB^{\varepsilon} \cdot \Div
    \emC \di x + \int_{\Omega^{\varepsilon}_s} \Curl
    \emB^{\varepsilon} \cdot \Curl \emC \di x
    \right)\\
  &=  2\int_{\Omega} \DD(\uu^{\varepsilon}):\DD(\vv) \di x
  {}+{}
    \frac{\nAl}{\nRm} \left( \int_{\Omega} \Div \emB^{\varepsilon} \cdot \Div
    \emC \di x + \int_{\Omega^{\varepsilon}_s} \Curl
    \emB^{\varepsilon} \cdot \Curl \emC \di x
    \right)\\
  &\eqqcolon 2Q_1 + \frac{\nAl}{\nRm} \left( Q_2 + Q_3 \right)
\end{align*}
Then \eqref{eq:21} implies
\begin{align}
\label{eq:22}
  \begin{split}
    \lim_{\varepsilon \to 0}Q_1
    &=\lim_{\varepsilon \to 0} \int_{\Omega}\DD(\uu^{\varepsilon}): \DD(\vv)\di x\\
    &= \lim_{\varepsilon \to 0} \int_{\Omega}\DD(\uu^{\varepsilon}): \left[ \DD
      (\vv^0)(x) + \varepsilon \DD \left( \vv^1 \right) \left( x,
        \frac{x}{\varepsilon} \right) +  \DD_y (\vv^1)\left( x, \frac{x}{\varepsilon}
      \right) \right]\di x \\
    &= \frac{1}{\abs{Y}} \int_{\Omega} \int_Y \left[ \DD(\uu^0) +
      \DD_y(\uu^1) \right]: \left[ \DD(\vv^0) + \DD_y (\vv^1)
    \right]\di y \di x.
  \end{split}
\end{align}
Similarly, we have
\begin{align}
\label{eq:23}
  \begin{split}
    \lim_{\varepsilon \to 0} Q_2
    &=
    \lim_{\varepsilon\to 0} \int_{\Omega} \Div \emB^{\varepsilon}
    \cdot \Div \emC \di x\\
    &= \frac{1}{\abs{Y}} \int_{\Omega} \int_Y \left( \Div \emB^0 +
      \Div_y \emB^1 \right) \cdot \left( \Div \emC^0 + \Div_y \emC^1
    \right) \di y \di x.
  \end{split}
\end{align}

To compute the limit of the integral $Q_3$, we make use of the following limiting
behaviors of the domain $\Omega_s^{\varepsilon}$, which varies as
$\varepsilon$ goes to 0. Clearly,
\begin{align}
\label{eq:30}
\mathds{1}_{\Omega^{\varepsilon}_s} \tscale
  \mathds{1}_{\Omega\times Y_s} \quad \text{ and }\quad
  \lim_{\varepsilon\to 0}
  \norm{\mathds{1}_{\Omega_s^{\varepsilon}}}_{L^2(\Omega)} =
  \norm{\mathds{1}_{\Omega \times Y_s}}_{L^2(\Omega \times Y)}.
\end{align}
Since $\mathds{1}_{\Omega \times Y_s} \in L^2_{\per} \left( Y,
  C(\Omega) \right)$, we obtain from \cref{sec:two-scale-corrector}
that 
\begin{align}
\label{eq:38}
\lim_{\varepsilon \to 0} \norm{\mathds{1}_{\Omega^{\varepsilon}_s}(x) -
  \mathds{1}_{\Omega \times Y_s} \left( x, \frac{x}{\varepsilon}
  \right)}_{L^2(\Omega)} = 0.
\end{align}
Now we write 
\begin{align*}
  Q_3
  &= \int_{\Omega_s^{\varepsilon}} \Curl \emB^{\varepsilon}(x) \cdot
    \Curl \emC^0 (x) \di x + \varepsilon \int_{\Omega_s^{\varepsilon}}
    \Curl \emB^{\varepsilon}(x)\cdot \Curl_y \emC^1 \left( x,
    \frac{x}{\varepsilon} \right) \di x \\
  &{}\qquad {}+ \int_{\Omega_s^{\varepsilon}} \Curl \emB^{\varepsilon}
    \cdot \Curl_y \emC^1 \left( x, \frac{x}{\varepsilon} \right) \di
    x\\
  &\eqqcolon L_1 + L_2 + L_3.
\end{align*}
Clearly, $\lim_{\varepsilon \to 0} L_2 = 0.$
By \eqref{eq:30}, \eqref{eq:21} and \eqref{eq:36} of \cref{sec:two-scale-corrector} we have 
\begin{align*}
  \lim_{\varepsilon \to 0} L_1
  &= \lim_{\varepsilon \to 0} \int_{\Omega} \mathds{1}_{\Omega_s^{\varepsilon}}(x) \Curl
    \emB^{\varepsilon}(x) \cdot \Curl \emC^0(x) \di x \\
  &= \frac{1}{\abs{Y}} \int_{\Omega} \int_{Y_s} \left( \Curl \emB^0(x) +
    \Curl_y \emB^1(x,y) \right) \cdot \Curl \emC^0(x) \di y \di x.
\end{align*}
And finally, for $L_3$, we have
\begin{align*}
  L_3
  &= \int_{\Omega} \Curl \emB^{\varepsilon}(x) \cdot \left(
    \mathds{1}_{\Omega_s^{\varepsilon}}(x) -\mathds{1}_{\Omega \times
    Y_s} \left( x, \frac{x}{\varepsilon} \right) \right) \Curl_y \emC^1
    \left( x, \frac{x}{\varepsilon} \right) \di x\\
  &{}\quad{}+{} \int_{\Omega} \Curl \emB^{\varepsilon}(x) \cdot
    \mathds{1}_{\Omega \times Y_s} \left( x, \frac{x}{\varepsilon}
    \right)\Curl_y \emC^1 \left( x, \frac{x}{\varepsilon} \right) \di x,
\end{align*}
For the first integral above, we obtain
\begin{align*}
&\abs{\int_{\Omega} \Curl \emB^{\varepsilon}(x) \cdot \left(
    \mathds{1}_{\Omega_s^{\varepsilon}}(x) -\mathds{1}_{\Omega \times
    Y} \left( x, \frac{x}{\varepsilon} \right) \right) \Curl_y \emC^1
                 \left( x, \frac{x}{\varepsilon} \right) \di x}\\
  &\quad \le C \norm{\nabla_y \emC^1}_{L^{\infty}} \norm{\nabla
    \emB^{\varepsilon}}_{L^2}
    \norm{\mathds{1}_{\Omega_s^{\varepsilon}} (x) - \mathds{1}_{\Omega
    \times Y_s} \left( x, \frac{x}{\varepsilon} \right)}_{L^2} \cv 0
\end{align*}
as $\varepsilon \to 0$ due to \eqref{eq:38} and H\"older's inequality. By the latter and \eqref{eq:21}, we have
\begin{align*}
  \lim_{\varepsilon \to 0} L_3
  &= \lim_{\varepsilon \to 0 } \int_{\Omega} \Curl
  \emB^{\varepsilon}(x) \cdot \mathds{1}_{\Omega \times Y_s} \Curl_y
    \emC^1 \left( x, \frac{x}{\varepsilon} \right) \di x\\
  &= \frac{1}{\abs{Y}} \int_{\Omega} \int_{Y_s} \left( \Curl \emB^0(x)
    + \Curl_y \emB^1 (x,y)\right) \cdot \Curl_y \emC^1 (x,y) \di y \di x.
\end{align*}
In conclusion, we have
\begin{align}
\label{eq:24}
\lim_{\varepsilon \to 0} Q_3
  &= \frac{1}{\abs{Y}} \int_{\Omega} \int_{Y_s} \left( \Curl \emB^0 +
    \Curl_y \emB^1 \right) \cdot \left( \Curl \emC^0 + \Curl_y \emC^1
    \right) \di y \di x.
\end{align}

From \eqref{eq:22}, \eqref{eq:23} and \eqref{eq:24}, the effective form $\calA^0$, corresponding to the limit as $\varepsilon\to0$ of $\calA^{\varepsilon}$, is given by
\begin{align}
\label{eq:26}
  \begin{split}
    \calA^0
    &\coloneqq \frac{2}{\abs{Y}} \int_{\Omega} \int_Y \left( \DD(\uu^0) +
      \DD_y(\uu^1) \right): \left[ \DD(\vv^0) + \DD_y (\vv^1)
    \right]\di y \di x \\
    &\quad {}+{}
    \frac{\nAl}{\nRm} \left\{
      \frac{1}{\abs{Y}} \int_{\Omega} \int_Y \left( \Div \emB^0 +
      \Div_y \emB^1 \right) \cdot \left( \Div \emC^0 + \Div_y \emC^1
    \right) \di y \di x \right.\\
  &\qquad + \left.
    \frac{1}{\abs{Y}} \int_{\Omega} \int_{Y_s} \left( \Curl \emB^0 +
    \Curl_y \emB^1 \right) \cdot \left( \Curl \emC^0 + \Curl_y \emC^1
    \right) \di y \di x
    \right\}.
  \end{split}
\end{align}

\paragraph{The effective forms corresponding to $\calB^{\varepsilon}$ and $\calL^{\varepsilon}$}~\\

From the last limit of \eqref{eq:21}, we have 
\begin{align}
  \label{eq:48}
  \begin{split}
  \lim_{\varepsilon\to 0} \calB^{\varepsilon}\left(
  (\vv,\emC),p^{\varepsilon} \right)
  &=\lim_{\varepsilon \to 0} \int_{\Omega} p^{\varepsilon} \Div \vv
    \di x\\
  &=\lim_{\varepsilon \to 0} \int_{\Omega} p^{\varepsilon}(x) \left(
    \Div \vv^0(x) + \varepsilon \Div \vv^1 \left( x,
    \frac{x}{\varepsilon} \right) + \Div_y \vv^1 \left( x,
    \frac{x}{\varepsilon} \right) \right) \di x\\
  &= \frac{1}{\abs{Y}} \int_{\Omega} \int_Y p^0 \left( \Div \vv^0 +
    \Div_y \vv^1 \right) \di y \di x.
  \end{split}
\end{align}

Moreover, 
\begin{align}
\label{eq:52}
  \begin{split}
    \lim_{\varepsilon \to 0} \calB^{\varepsilon} \left( \left(
        \uu^{\varepsilon}, \emB^{\varepsilon} \right), q \right)
    &= \lim_{\varepsilon \to 0} \int_{\Omega} q \Div \uu^{\varepsilon}
    \di x\\
    &= \frac{1}{\abs{Y}} \int_{\Omega} \int_Y
    q^0(x) \left( \Div \uu^0(x) + \Div_y \uu^1(x,y) \right) \di y
    \di x.
  \end{split}
\end{align}
From \eqref{eq:30}, we have 
\begin{align}
  \label{eq:49}
  \begin{split}
  \lim_{\varepsilon \to 0} \calL^{\varepsilon} (\vv,\emC)
  &= \lim_{\varepsilon \to 0} \left( \nRe \int_{\Omega} \bg \cdot \vv \di x+ \nAl\int_{\Omega^{\varepsilon}_s} \hh \cdot \emC \di x \right)\\
  &= \nRe  \int_{\Omega}  \bg \cdot \vv^0 \di x + \nAl
    \frac{\abs{Y_s}}{\abs{Y}} \int_{\Omega} \hh \cdot \emC^0 \di x.
    \end{split}
\end{align}
\paragraph{The effective form corresponding to $\calC^{\varepsilon}$}~\\

Recall that
\begin{align*}
\calC^{\varepsilon} &\left((\uu^{\varepsilon},\emB^{\varepsilon}), (\uu^{\varepsilon},\emB^{\varepsilon}),(\vv,\emC) \right)\\  
  &=\nRe \int_{\Omega} \left( \uu^{\varepsilon} \cdot \nabla \right) \uu^{\varepsilon}
    \cdot \vv \di x \\
  &\qquad{}-{} \nAl \int_{\Omega^{\varepsilon}_s} (\Curl \emB^{\varepsilon} \times
    \emB^{\varepsilon}) \cdot \vv \di x -\nAl\int_{\Omega^{\varepsilon}_s} \left( \uu^{\varepsilon} \times \emB^{\varepsilon}
    \right)\cdot \Curl \emC \di x\\
  &\eqqcolon
    \nRe I_1 -\nAl I_2 - \nAl I_3.
\end{align*}

\begin{itemize}
\item To obtain $\lim_{\varepsilon \to 0} I_1$, we split 
\begin{align}
  \label{eq:28}
  \begin{split}
  I_1
  &=  \int_{\Omega} \left( \uu^{\varepsilon} \cdot \nabla \right)
    \uu^{\varepsilon} \cdot \vv \di x \\
  &=\int_{\Omega} \left( (\uu^{\varepsilon}-\uu^0) \cdot \nabla \right)
    \uu^{\varepsilon} \cdot \vv \di x + \int_{\Omega} \left( \uu^0
      \cdot \nabla \right) \uu^{\varepsilon} \cdot \vv \di x\\
  &\eqqcolon J_1 +  J_2.
  \end{split}
\end{align}
From \eqref{eq:20} and \eqref{eq:21}, we have
\begin{align*}
  \lim_{\varepsilon \to 0}\abs{J_1}
  &\le \lim_{\varepsilon \to 0}  \norm{\uu^{\varepsilon} -\uu^0}_{L^2}
    \norm{\nabla \uu^{\varepsilon}}_{L^2} \norm{\vv}_{L^{\infty}}\\
  &\le  \lim_{\varepsilon \to 0}  \norm{\uu^{\varepsilon} - \uu^0}_{L^2}
    \frac{1}{\alpha}\left( \nRe \norm{\bg}_{L^2}+ \nAl\norm{\hh}_{L^2} \right)
    \norm{\vv}_{L^{\infty}} = 0.
\end{align*}
From the above, \eqref{eq:21} and since $\left( \uu^{\varepsilon} \cdot
  \nabla \right) \uu^{\varepsilon} = u^{\varepsilon}_i
\frac{\partial}{\partial x_i} \uu^{\varepsilon} $, we obtain
\begin{align}
  \label{eq:27}
  \begin{split}
  \lim_{\varepsilon \to 0} I_1=\lim_{\varepsilon \to 0} J_2
  &= \lim_{\varepsilon \to 0}  \int_{\Omega} u^{\varepsilon}_i(x)
    \frac{\partial}{\partial x_i} \uu^{\varepsilon}(x) \cdot \left(
      \vv^0(x) + \varepsilon \vv^1 \left( x, \frac{x}{\varepsilon} \right) \right) \di x\\
  &=  \frac{1}{\abs{Y}} \int_{\Omega} \int_Y u^0_i(x) \left(
    \frac{\partial}{\partial x_i} \uu^0(x) + \frac{\partial}{\partial
      y_i} \uu^1(x,y) \right) \cdot \vv^0(x)\di y \di x.
  \end{split}
\end{align}
\item Similarly, to obtain $\lim_{\varepsilon \to 0}  I_2$, we split
\begin{align*}
  I_2
  &=  \int_{\Omega^{\varepsilon}_s} \left( \Curl
    \emB^{\varepsilon} \times \emB^{\varepsilon} \right) \cdot \vv
    \di x \\
  &=  \int_{\Omega_s^{\varepsilon}} \left[ \Curl
    \emB^{\varepsilon} \times \left( \emB^{\varepsilon} - \emB^0 \right)
    \right] \cdot \vv \di x +  \int_{\Omega_s^{\varepsilon}} \left(
    \Curl \emB^{\varepsilon} \times \emB^0 \right) \cdot \vv
    \di x\\
  &\eqqcolon  K_1 +  K_2.
\end{align*}
From \eqref{eq:20} and \eqref{eq:21}, we have 
\begin{align*}
  \lim_{\varepsilon \to 0} \abs{K_1}
  &= \lim_{\varepsilon \to 0}  \norm{\Curl
    \emB^{\varepsilon}}_{L^2} \norm{\emB^{\varepsilon} - \emB^0}_{L^2}
    \norm{\vv}_{L^{\infty}} \\
  &\le \lim_{\varepsilon \to 0} 2
    \norm{\nabla\emB^{\varepsilon}}_{L^2} \norm{\emB^{\varepsilon} - \emB^0}_{L^2}
    \norm{\vv}_{L^{\infty}} \\
  &\le\lim_{\varepsilon \to 0}  2 \frac{1}{\alpha}\left( \nRe
    \norm{\bg}_{L^2}+ \nAl\norm{\hh}_{L^2} \right)
    \norm{\emB^{\varepsilon}-\emB^0}_{L^2} \norm{\vv}_{L^{\infty}} = 0.
\end{align*}
From the above, \eqref{eq:21}, \eqref{eq:30} and \eqref{eq:36} of \cref{sec:two-scale-corrector}, we obtain 
\begin{align*}
   \lim_{\varepsilon \to 0} I_2 &= \lim_{\varepsilon \to 0} K_2\\
  &= \lim_{\varepsilon \to 0} \int_{\Omega}
    \mathds{1}_{\Omega_s^{\varepsilon}}(x) \left( \Curl
    \emB^{\varepsilon}(x)\times \emB^0(x) \right)\cdot \left( \vv^0(x) +
    \varepsilon \vv^1 \left( x, \frac{x}{\varepsilon} \right) \right)
    \di x\\
  &= \frac{1}{\abs{Y}} \int_{\Omega} \int_{Y_s} \left( \Curl
    \emB^0(x) + \Curl_y \emB^1(x,y) \right) \times \emB^0(x) \cdot
    \vv^0(x) \di y \di x.
\end{align*}
\item Finally, to obtain $ \lim_{\varepsilon \to 0}  I_3$: \begin{align*}
  I_3
  &= \int_{\Omega_s^{\varepsilon}} \left( \uu^{\varepsilon}(x)
    \times \emB^{\varepsilon}(x) \right) \cdot \left( \Curl \emC^0(x) +
    \varepsilon \Curl \emC^1 \left( x,\frac{x}{\varepsilon} \right)\right.\\
  &\qquad{} +{}\left.
    \Curl_y \emC^1 \left( x,\frac{x}{\varepsilon} \right) \right) \di
    x\\
  &= \int_{\Omega}\mathds{1}_{\Omega_s^{\varepsilon}} \left( \uu^{\varepsilon}(x)
    \times \emB^{\varepsilon}(x) \right) \cdot  \Curl \emC^0(x)\di
    x\\
   &\qquad {}+{} \int_{\Omega}\mathds{1}_{\Omega_s^{\varepsilon}} \left( \uu^{\varepsilon}(x)
    \times \emB^{\varepsilon}(x) \right) \cdot 
    \Curl_y \emC^1 \left( x,\frac{x}{\varepsilon} \right) \di
    x
  \\
  &\eqqcolon M_1 + M_2.
\end{align*}

By the Rellich–Kondrachov theorem, we have $\uu^{\varepsilon}$
and $\emB^{\varepsilon}$ strongly converge to $\uu^0$ and $\emB^0$ in
$L^4(\Omega,\RR^d)$, respectively. Moreover, since $\emC^0$ is smooth, we have
$\left( \uu^{\varepsilon} \times \emB^{\varepsilon} \right) \cdot
\Curl \emC^0$ strongly converges to
$\left( \uu^0 \times \emB^0 \right) \cdot \Curl \emC^0$ in $L^2$. Also
$\mathds{1}_{\Omega_s^{\varepsilon}} \wcv \frac{1}{\abs{Y}} \int_Y
\mathds{1}_{\Omega \times Y_s} \di y$ in $L^2$, so
\begin{align*}
  \lim_{\varepsilon \to 0} M_1
  &=  \frac{1}{\abs{Y}} \int_{\Omega} \int_{Y_s} \left( \uu^0(x)\times
    \emB^0(x) \right) \cdot \Curl \emC^0(x)
     \di y \di x.
\end{align*}
Next, rewrite $M_2$ as
\begin{align*}
  M_2
  &=\int_{\Omega}\mathds{1}_{\Omega_s^{\varepsilon}} \left( \uu^{\varepsilon}(x)
    \times \emB^{\varepsilon}(x) - \uu^0(x) \times \emB^0(x)\right) \cdot 
    \Curl_y \emC^1 \left( x,\frac{x}{\varepsilon} \right) \di x\\
  &{}\qquad {}+{} \int_{\Omega}\mathds{1}_{\Omega_s^{\varepsilon}} \left(\uu^0(x) \times \emB^0(x)\right) \cdot 
    \Curl_y \emC^1 \left( x,\frac{x}{\varepsilon} \right) \di x
\end{align*}
Since $\uu^{\varepsilon} \times \emB^{\varepsilon}$ strongly converges
to $\uu^0 \times \emB^0$ in $L^2$, we have 
\begin{align*}
&\abs{\int_{\Omega}\mathds{1}_{\Omega_s^{\varepsilon}} \left( \uu^{\varepsilon}(x)
    \times \emB^{\varepsilon}(x) - \uu^0(x) \times \emB^0(x)\right) \cdot 
  \Curl_y \emC^1 \left( x,\frac{x}{\varepsilon} \right) \di x
  }\\
  &\qquad\le \norm{\uu^{\varepsilon} \times \emB^{\varepsilon}-\uu^0 \times
    \emB^0}_{L^2} \norm{\Curl_y \emC^1}_{L^{\infty}} \cv 0 \quad \text{as} \quad
    \varepsilon \to 0.
\end{align*}
Thus, using $\mathds{1}_{\Omega_s^{\varepsilon}} \tscale
\mathds{1}_{\Omega \times Y_s},$ we obtain
\begin{align*}
  \lim_{\varepsilon \to 0} M_2
  &=  \frac{1}{\abs{Y}} \int_{\Omega} \int_{Y_s} \left( \uu^0(x)\times
    \emB^0(x) \right) \cdot   \Curl_y \emC^1(x,y)
     \di y \di x.
\end{align*}
Therefore, 
\begin{align}
\label{eq:39}
  \lim_{\varepsilon \to 0} I_3
  &= \frac{1}{\abs{Y}} \int_{\Omega} \int_{Y_s} \left( \uu^0(x)\times
    \emB^0(x) \right) \cdot \left( \Curl \emC^0(x) + \Curl_y \emC^1(x,y)
    \right) \di y \di x.
\end{align}
\end{itemize}

\paragraph{Summary}~\\
We now collect all relevant results obtained above in order to derive the two-scale homogenized system.  In the weak formulation \eqref{eq:594}, we 
choose
$\vv = \vv^0 (\cdot) + \varepsilon \vv^1 \left(
  \cdot,\frac{\cdot}{\varepsilon} \right)$,
$\emC = \emC^0 (\cdot) + \varepsilon \emC^1 \left(
  \cdot,\frac{\cdot}{\varepsilon} \right),$ and $q = q^0 \left( \cdot
\right) + q^1 \left( \cdot, \frac{\cdot}{\varepsilon} \right)$, with
$\vv^0, \emC^0 \in \calD (\Omega,\RR^d)$, $q^0 \in \calD(\Omega)$ and
$\vv^1,\emC^1 \in \calD \left( \Omega, C^{\infty}_{\per}(Y,\RR^d)
\right),$ $q^1 \in \calD \left( \Omega, C_{\per}^{\infty}(Y)
\right)$. Then, letting $\varepsilon \to 0$, we obtain
\begin{align}
\label{eq:50}
    &\frac{2}{\abs{Y}} \int_{\Omega} \int_Y \left( \DD(\uu^0(x)) +
      \DD_y(\uu^1(x,y)) \right): \left( \DD(\vv^0(x)) + \DD_y
      (\vv^1(x,y))
    \right)\di y \di x \\ \nonumber
    & {}+{} \frac{\nAl}{\nRm} \left\{ \frac{1}{\abs{Y}}
      \int_{\Omega} \int_Y \left( \Div \emB^0(x) + \Div_y \emB^1(x,y)
      \right) \cdot \left( \Div \emC^0(x) + \Div_y \emC^1(x,y)
      \right) \di y \di x \right.\\ \nonumber
    & + \left.  \frac{1}{\abs{Y}} \int_{\Omega} \int_{Y_s}
      \left( \Curl \emB^0(x) + \Curl_y \emB^1(x,y) \right) \cdot
      \left( \Curl \emC^0(x) + \Curl_y \emC^1(x,y) \right) \di y \di x
    \right\}\\  \nonumber
    &{}+{} \frac{1}{\abs{Y}} \int_{\Omega} \int_Y p^0(x,y) \left( \Div
      \vv^0(x) +
      \Div_y \vv^1 (x,y)\right) \di y \di x\\ \nonumber
    &{}+{} \nRe \frac{1}{\abs{Y}} \int_{\Omega} \int_Y u^0_i(x) \left(
      \frac{\partial}{\partial x_i} \uu^0(x) +
      \frac{\partial}{\partial
        y_i} \uu^1(x,y) \right) \cdot \vv^0(x)\di y \di x\\ \nonumber
    & - \nAl \frac{1}{\abs{Y}} \int_{\Omega} \int_{Y_s} \left(
      \Curl \emB^0(x) + \Curl_y \emB^1(x,y) \right) \times \emB^0(x)
    \cdot
    \vv^0(x) \di y \di x \\ \nonumber
    & -\nAl \frac{1}{\abs{Y}} \int_{\Omega} \int_{Y_s} \left(
      \uu^0(x)\times \emB^0(x) \right) \cdot \left( \Curl \emC^0(x) +
      \Curl_y \emC^1(x,y)
    \right) \di y \di x\\
    &= \nRe \int_{\Omega} \bg (x)\cdot \vv^0 (x)\di x + \nAl
    \frac{\abs{Y_s}}{\abs{Y}} \int_{\Omega} \hh (x) \cdot \emC^0(x) \di x,\nonumber
\end{align}
and 
\begin{align}
\label{eq:51}
 \frac{1}{\abs{Y}} \int_{\Omega} \int_Y
    q^0(x) \left( \Div \uu^0(x) + \Div_y \uu^1(x,y) \right) \di y
  \di x
   =0.
\end{align}
Finally, testing \eqref{eq:7}, \eqref{eq:8}, \eqref{eq:9} and \eqref{eq:11}
with suitable test functions and applying \eqref{eq:21}, we obtain
\begin{align}
\label{eq:57}
\begin{split}
    \Div \uu^0 = 0 \text{ in }\Omega,\\
    ~\\
    \Div \emB^0 = 0 \text{ in }\Omega,\\
    ~
\end{split}
  \begin{split}
    \Div_y \uu^1 &= 0 \text{ in } \Omega \times Y,\\
    \DD \left( \uu^0 \right) + \DD_y \left( \uu^1 \right) &= 0 \text{   in } \Omega \times Y_s,\\
     \Div_y \emB^1 &= 0 \text{ in } \Omega \times Y,\\
    \Curl \emB^0 + \Curl_y \emB^1 &= 0 \text{ in } \Omega \times Y_f.
  \end{split}
\end{align}
These identities allow us to simplify \eqref{eq:50}-\eqref{eq:51} in
later calculations.

\subsection{The local problem}
\label{sec:local-problem}
The local problem is derived from \eqref{eq:50}-\eqref{eq:51} by
letting $\vv^0 = \emC^0 = \mathbf{0}$ and $q^0 = 0$, 
\begin{align}
\label{eq:53}
  \begin{split}
    &\frac{2}{\abs{Y}} \int_{\Omega} \int_Y \left( \DD(\uu^0(x)) +
      \DD_y(\uu^1(x,y)) \right): \DD_y (\vv^1(x,y))
    \di y \di x \\
    &\qquad {}+{} \frac{\nAl}{\nRm} \left\{ \frac{1}{\abs{Y}}
      \int_{\Omega} \int_Y \left( \Div \emB^0(x) + \Div_y \emB^1(x,y) \right)
      \cdot \Div_y \emC^1(x,y) \di y \di x \right.\\
    &\qquad + \left.  \frac{1}{\abs{Y}} \int_{\Omega} \int_{Y_s}
      \left( \Curl \emB^0(x) + \Curl_y \emB^1(x,y) \right) \cdot \Curl_y
      \emC^1(x,y) \di y \di x
    \right\}\\
    &{}+{} \frac{1}{\abs{Y}} \int_{\Omega} \int_Y p^0(x,y)
      \Div_y \vv^1(x,y) \di y \di x\\
    &\qquad -\nAl \frac{1}{\abs{Y}} \int_{\Omega} \int_{Y_s} \left(
      \uu^0(x)\times \emB^0(x) \right) \cdot 
      \Curl_y \emC^1(x,y) \di y \di x\\
    &= 0.
  \end{split}
\end{align}

Letting $\vv^1(x,y) = \ww(y) \varphi(x)$ and $\emC^1(x,y) = \emG (y)
\varphi (x)$, for $\ww, \emG \in H^1_{\per}(Y,\RR^d)$ and $\varphi \in \calD
(\Omega)$, we deduce from \eqref{eq:53} that, for a.e. $x \in \Omega$,
\begin{align}
\label{eq:54}
  \begin{split}
    & 2\int_Y \left( \DD(\uu^0(x)) +
      \DD_y(\uu^1(x,y)) \right): \DD_y (\ww(y))
    \di y  \\
    &\qquad {}+{} \frac{\nAl}{\nRm} \left\{ 
      \int_Y \left( \Div \emB^0(x) + \Div_y \emB^1(x,y) \right)
      \cdot \Div_y \emG (y) \di y \right.\\
    &\qquad + \left. \int_{Y_s}
      \left( \Curl \emB^0(x) + \Curl_y \emB^1(x,y) \right) \cdot \Curl_y
      \emG (y) \di y 
    \right\}\\
    &\qquad{}+{}  \int_Y p^0(x,y)
    \Div_y \ww (y) \di y  -\nAl \int_{Y_s} \left(
      \uu^0(x)\times \emB^0(x) \right) \cdot
    \Curl_y \emG(y) \di y \\
    &\qquad= 0.
  \end{split}
\end{align}
Define
\begin{align*}
  \calX_Y
  &\coloneqq
    \Set*{
    (\vec{\omega},\vec{\Theta}) \in
    H^1_{\per}(Y, \RR^d) \times H^1_{\per}(Y, \RR^d)
    \given
\begin{aligned}
  \Div_y \vec{\omega} &= 0 \text{ in }Y\\
  \DD_y(\vec{\omega}) &= 0 \text{ in } Y_s\\
  \Curl_y \vec{\Theta} &= 0 \text{ in } Y_f
\end{aligned}}
\end{align*}
So for $\left( \ww,\emG \right) \in \calX_Y$, from \eqref{eq:54} the
following holds a.e. $x \in \Omega$, 
\begin{align}
\label{eq:56}
\begin{split}
    & 2\int_Y \left( \DD(\uu^0(x)) +
      \DD_y(\uu^1(x,y)) \right): \DD_y (\ww (y))
    \di y  \\
    &\qquad {}+{} \frac{\nAl}{\nRm} \left\{ 
      \int_Y \left( \Div \emB^0(x) + \Div_y \emB^1 (x,y) \right)
      \cdot \Div_y \emG (y)\di y \right.\\
    &\qquad + \left.  \int_{Y_s}
      \left( \Curl \emB^0 (x) + \Curl_y \emB^1 (x,y)\right) \cdot \Curl_y
      \emG (y)\di y 
    \right\}\\
    &\qquad -\nAl \int_{Y_s} \left(
      \uu^0(x)\times \emB^0(x) \right) \cdot
    \Curl_y \emG(y) \di y \\
    &\qquad= 0.
  \end{split}
\end{align}
or equivalently,
\begin{align}
\label{eq:55}
  \begin{split}
    & 2\int_Y \DD_y(\uu^1): \DD_y (\ww) \di y {}+{} \frac{\nAl}{\nRm}
    \left\{ \int_Y \Div_y \emB^1 \cdot \Div_y \emG \di y + \int_{Y_s}
      \Curl_y \emB^1 \cdot \Curl_y \emG \di y
    \right\}\\
    &\qquad= -\int_Y  \DD(\uu^0) : \DD_y (\ww) \di y \\
    &\qquad \quad {}-{} \frac{\nAl}{\nRm} \left\{ \int_Y \Div \emB^0 \cdot
      \Div_y \emG \di y + \int_{Y_s} \Curl \emB^0 \cdot \Curl_y \emG
      \di y
    \right\}\\
    &\qquad \quad {}+{}\nAl \int_{Y_s} \left(
      \uu^0(x)\times \emB^0(x) \right) \cdot
    \Curl_y \emG(y) \di y.
  \end{split}
\end{align}

Clearly, for fix $x \in \Omega$, problem \eqref{eq:55} has a unique
solution
$\left( \uu^1 (x,\cdot), \emB^1(x,\cdot) \right) \in \calX_Y$, because
the left hand side of \eqref{eq:55} is coercive, which in turn, comes
from the inequality \eqref{eq:603} (note that this estimate also holds
for a convex polyhedron, which is why we can replace $\Omega$ by
$Y$). Therefore, as long as $\uu^0$ and $\emB^0$ are well-defined,
$\uu^1$ and $\emB^1$ are independent of the choice of subsequences
$\uu^{\varepsilon}$ and $\emB^{\varepsilon}$ in \eqref{eq:21}. Finally,
$p^0(x,\cdot) \in L^2_0(Y)$ is also unique due to the inf-sup condition
(repeating the first part of the proof of \cref{sec:stat-ferr-equat-8}).

First, we calculate $\uu^1$ in terms of $\uu^0$. In \eqref{eq:56}, let $\emG =
0$, then 
\begin{align}
\label{eq:58}
  \int_Y \left( \DD \left( \uu^0(x) \right) + \DD_y \left( \uu^1 (x,y)\right) \right) : \DD_y \left( \ww (y)\right) \di y
  &= 0
\end{align}
with $\ww \in H^1_{\per}\left( Y, \RR^d \right)$ satisfying $\Div_y
\ww = 0$ in $Y$ and $\DD_y(\ww) = 0$ in $Y_s$.
For $1\le i,j \le d$, define the function
$\UU^{ij} \coloneqq y_j\delta_{ik} \ee^k$; then, by direct
calculation, $\DD_y(\UU^{ij}) = \frac{1}{2} \left(
  \delta_{jm}\delta_{in} + \delta_{jn}\delta_{im} \right) \ee^n
\otimes \ee^m$. 
Let $\vec{\omega}^{ij}\in H^1_{\per}(Y,\RR^d)$ and $\pi^{ij}\in L^2_0(Y)$ be the
solutions of 
\begin{align}
\label{eq:59}
  \begin{split}
    \Div_y \left( \DD_y \left( \UU^{ij}-\vec{\omega}^{ij} \right) -
      \pi^{ij} \right)& = 0 \text{ in } Y_f,\\
    \Div_y \vec{\omega}^{ij}& = 0 \text{ in } Y_f,\\
    \DD_y \left(
      \UU^{ij}
      - \vec{\omega}^{ij} \right)& = 0 \text{ in } Y_s,\\
    \int_{\Gamma} \left( \DD_y \left( \UU^{ij}-\vec{\omega}^{ij}
      \right)- \pi^{ij}\II \right)\nn_{\Gamma} \di \hmeas &=0,\\
    \int_{\Gamma} \left( \DD_y \left( \UU^{ij}-\vec{\omega}^{ij}
      \right)- \pi^{ij}\II \right)\nn_{\Gamma}\times \nn_{\Gamma} \di
    \hmeas& =0.
  \end{split}
\end{align}

Then, integrating by parts \eqref{eq:58} and using \eqref{eq:57} and
\eqref{eq:59}, we see that $\uu^1$ is given by
\begin{align}
\label{eq:60}
  \uu^1(x,y) = -\left[ \DD \left( \uu^0(x) \right) \right]_{ij} \vec{\omega}^{ij}(y).
\end{align}

We now calculate $\emB^1$ in terms of $\emB^0$. In \eqref{eq:56}, let
$\ww = 0$ and use \eqref{eq:57} to obtain 
\begin{align}
\label{eq:61}
\int_{Y_s} \left( \Curl \emB^0(x) - \nRm \uu^0 (x)\times \emB^0(x) + \Curl_y
  \emB^1 (x,y)\right) \cdot \Curl_y \emG (y) \di y
  = 0,
\end{align}
with $\emG \in H^1_{\per}(Y,\RR^d)$ satisfying $\Curl_y \emG = 0$ in $Y_f$.
For $1 \le j \le d$, let $\vec{\Theta}^j \in H^1_{\per}(Y, \RR^d)$ and $\vec{\Psi}^j \in H^1_{\per}(Y, \RR^d)$ be the
solutions of 
\begin{align}
\label{eq:64}
  \begin{split}
    \Curl_y \Curl_y \left( \vec{\Theta}^j + \ee^j \right) &= 0 \text{
     in } Y_s,\\
    \left( \vec{\Theta}^j + \ee^j \right) \cdot \nn_{\Gamma} 
    &= 0
    \text{ on } \Gamma,\\
    \Div_y \vec{\Theta}^j
    &= 0 \text{ in }Y,
    \end{split}
    \begin{split}
    \Curl_y \left(  \vec{\Theta}^j + \ee^j\right) 
    &= 0 \text{ in }Y_f,\\
    \Curl_y \left( \vec{\Theta}^j + \ee^j \right)\times \nn_{\Gamma}
    &= 0 \text{ on }\Gamma,
    \end{split}
\end{align}
and 
\begin{align}
\label{eq:65}
  \begin{split}
    \Curl_y \Curl_y \left( \vec{\Psi}^j + \nRm \ee^j \right) 
    &= 0 \text{ in } Y_s,\\
     \left( \vec{\Psi}^j + \nRm\ee^j \right) \cdot \nn_{\Gamma} 
     &= 0
     \text{ on } \Gamma,\\
     \Div_y \vec{\Psi}^j
     &= 0 \text{ in }Y,
    \end{split}
    \begin{split}
    \Curl_y \vec{\Psi}^j 
    &= 0 \text{ in }Y_f,\\
    \Curl_y \left( \vec{\Psi}^j + \nRm\ee^j \right)\times \nn_{\Gamma}
    &= 0 \text{ on }\Gamma,
    \end{split}
\end{align}
respectively.  Then, integrating by parts \eqref{eq:61}, we see that
$\emB^1$ is given by
\begin{align}
\label{eq:62}
  \emB^1(x,y)
  = \epsilon_{ijk} \frac{\partial B_i^0}{\partial x_k}(x) \vec{\Theta}^j(y) -
  \epsilon_{ikj}u_i^0(x) B_k^0(x) \vec{\Psi}^j(y),
\end{align}
here $\epsilon_{ijk}$ is the (Levi-Civita) permutation symbol.

Now, we find a formula for $p^0 \in L^2_0(\Omega \times Y)$. Suppose 
\begin{align}
\label{eq:70}
  p^0(x,y)
  = 2 \left[ \DD \left( \uu^0(x) \right) \right]_{ij} \pi^{ij}(y) + \Pi(x,y)
\end{align}
for some $\Pi \in L^2_0(\Omega \times Y)$. We claim $\Pi$ is
independent of $y$. To see this, substitute \eqref{eq:60},
\eqref{eq:62} and use the local problems \eqref{eq:59}, \eqref{eq:64}, and \eqref{eq:65} in \eqref{eq:54} to obtain 
\begin{align*}
\int_Y p^0(x,y) \Div_y \ww (y) \di y = 0, \text{ for any } \ww \in H_{\per}^1(Y).
\end{align*}
Substituting \eqref{eq:70} into the above equation and integrating by
parts, all terms cancel by periodicity, except 
\begin{align*}
\int_Y \nabla_y \Pi^0(x,y) \cdot \ww(y) \di y = 0, \text{ for any } \ww
  \in H_{\per}^1(Y).
\end{align*}
Therefore, $\nabla_y\Pi(x,y) = \mathbf{0}$, i.e. $\Pi$ is independent of
$y$, and we write $\Pi(x,y) \equiv \Pi(x)$. Clearly, $p^{\varepsilon}
\wcv \Pi$ in $L^2(\Omega)$.

\subsection{The homogenized problem}
\label{sec:homogenized-problem}
The variational form of the homogenized equation is derived by letting
$\vv^1 = \emC^1 = \mathbf{0}$ and $q^1 = 0$ in
\eqref{eq:50}--\eqref{eq:51}, and then simplifying it by using \eqref{eq:57}:
\begin{align}
\label{eq:66}
\begin{split}
  &\frac{2}{\abs{Y}} \int_{\Omega} \int_Y \left( \DD(\uu^0(x)) +
    \DD_y(\uu^1(x,y)) \right): \DD(\vv^0(x)) \di y \di x \\
  &\qquad {}+{} \frac{\nAl}{\nRm} \frac{1}{\abs{Y}} \int_{\Omega}
  \int_{Y_s} \left( \Curl \emB^0(x) + \Curl_y \emB^1(x,y) \right) \cdot
  \Curl \emC^0 (x) \di y \di x\\
  &{}+{} \frac{1}{\abs{Y}} \int_{\Omega} \int_Y p^0(x,y) \Div \vv^0(x)
  \di y \di x\\
  &{}+{} \nRe \frac{1}{\abs{Y}} \int_{\Omega} \int_Y u^0_i(x) \left(
    \frac{\partial}{\partial x_i} \uu^0(x) + \frac{\partial}{\partial
      y_i} \uu^1(x,y) \right) \cdot \vv^0(x)\di y \di x\\
  &\qquad - \nAl \frac{1}{\abs{Y}} \int_{\Omega} \int_{Y_s}
    \Curl \emB^0(x) \times \emB^0(x)
  \cdot
  \vv^0(x) \di y \di x \\
  &\qquad -\nAl \frac{1}{\abs{Y}} \int_{\Omega} \int_{Y_s} \left(
    \uu^0(x)\times \emB^0(x) \right) \cdot \Curl \emC^0(x) \di y \di x\\
  &= \nRe \int_{\Omega} \bg (x) \cdot \vv^0(x) \di x + \nAl
  \frac{\abs{Y_s}}{\abs{Y}} \int_{\Omega} \hh(x) \cdot \emC^0(x) \di x,
  \end{split}
\end{align}
In \eqref{eq:66}, let $\emC^0 = 0$ and $\vv^0 \in
H_0^1(\Omega,\RR^d)$, we obtain 
\begin{align}
  \label{eq:67}
  \begin{split}
    &\frac{2}{\abs{Y}} \int_{\Omega} \int_Y \left( \DD(\uu^0(x)) +
      \DD_y(\uu^1(x,y)) \right): \DD(\vv^0(x)) \di y \di x \\
    &{}+{} \frac{1}{\abs{Y}} \int_{\Omega} \int_Y p^0(x,y) \Div \vv^0(x)
  \di y \di x\\
    &{}+{} \nRe \frac{1}{\abs{Y}} \int_{\Omega} \int_Y u^0_i (x)\left(
      \frac{\partial}{\partial x_i} \uu^0(x) +
      \frac{\partial}{\partial
        y_i} \uu^1(x,y) \right) \cdot \vv^0(x)\di y \di x\\
    &\qquad - \nAl \frac{\abs{Y_s}}{\abs{Y}} \int_{\Omega} \Curl
    \emB^0(x) \times \emB^0(x) \cdot
    \vv^0(x) \di x \\
    &= \nRe \int_{\Omega} \bg(x) \cdot \vv^0 (x)\di x.
  \end{split}
\end{align}
Define the \textit{effective viscosity} $\calN$, which is a fourth-rank tensor, by
\begin{align}
\label{eq:68}
  \calN_{ijmn}
  \coloneqq \frac{1}{\abs{Y}} \int_Y \left[\DD_y \left( \UU^{ij} -
  \vec{\omega}^{ij} \right) \right]_{mn} \di y.
\end{align}
Substituting \eqref{eq:60} and \eqref{eq:70} into \eqref{eq:67}, we obtain
\begin{align}
  \label{eq:69}
  \begin{split}
2\int_{\Omega} \calN_{ijmn} \left[ \DD \left( \uu^0 \right)
  \right]_{ij} \left[ \DD \left( \vv^0 \right) \right]_{mn} \di x +
  \int_{\Omega} \left( \uu^0 \cdot \nabla \right) \uu^0 \cdot \vv^0
  \di x + \int_{\Omega} \Pi \Div \vv^0 \di x\\
  - \nAl\frac{\abs{Y_s}}{\abs{Y}} \int_{\Omega} \Curl \emB^0 \times
  \emB^0 \cdot \vv^0 \di x = \nRe \int_{\Omega} \bg \cdot \vv^0 \di x.
  \end{split}
\end{align}
Here, we use the fact that 
$\int_Y \frac{\partial}{\partial y_i} \vec{\omega}^{ij} \di y= 0$ due
to periodicity, and that $\int_Y \pi^{ij} \di y = 0$ because $\pi^{ij} \in
L^2_0(Y)$. Integrating by parts \eqref{eq:69}, we have, on $\Omega$, that  
\begin{align}
\label{eq:71}
\left( \uu^0 \cdot \nabla \right)\uu^0 - \Div \left( 2  \calN_{ijmn} 
  \left[ \DD (\uu^0) \right]_{ij}\ee^m \otimes \ee^n - \Pi\, \II \right)
  =\nRe \bg + \nAl \frac{\abs{Y_s}}{\abs{Y}} \Curl \emB^0 \times \emB^0.
\end{align}

In \eqref{eq:66}, letting $\vv^0 = 0$ and $\emC^0 \in H_n^1(\Omega,\RR^d)$,
we obtain 
\begin{align}
\label{eq:72}
  \begin{split}
    &\frac{\nAl}{\nRm} \frac{1}{\abs{Y}} \int_{\Omega} \int_{Y_s}
    \left( \Curl \emB^0(x) + \Curl_y \emB^1(x,y) \right) \cdot
    \Curl \emC^0(x)  \di y \di x\\
    &\qquad -\nAl \frac{1}{\abs{Y}} \int_{\Omega} \int_{Y_s} \left(
      \uu^0(x)\times \emB^0(x) \right) \cdot \Curl \emC^0(x) \di y \di x\\
    &= \nAl \frac{\abs{Y_s}}{\abs{Y}} \int_{\Omega} \hh (x) \cdot \emC^0(x)
    \di x.
  \end{split}
\end{align}
Define the matrices $\calM$ and $\calE$, which represent the \emph{effective magnetic reluctivity} and the \emph{effective electric conductivity}, respectively, by
\begin{align}
  \label{eq:73}
  \begin{split}
  \calM_{jq}
  &\coloneqq \frac{1}{\abs{Y}} \int_{Y_s} \left[\left( \Curl_y\vec{\Theta}^j + \ee^j  \right) \right]_q \di y, \qquad
  \calE_{kq}
  \coloneqq \frac{1}{\abs{Y}} \int_{Y_s} \left[\left( \Curl_y\vec{\Psi}^k + \ee^k  \right) \right]_q \di y.
  \end{split}
\end{align}
Then, by substituting \eqref{eq:62} into \eqref{eq:72}, and using
\eqref{eq:73}, we obtain 
\begin{align}
\label{eq:74}
  \frac{1}{\nRm} \int_{\Omega} \calM_{jq} \epsilon_{ijk}\epsilon_{pqr}\frac{\partial
  B^0_i}{\partial x_k} \frac{\partial C^0_p}{\partial x_r} \di x-
  \int_{\Omega} \calE_{kq} \epsilon_{ijk} \epsilon_{pqr} u^0_i B^0_j \frac{\partial C_p^0}{\partial x_r} 
  \di x 
  = \frac{\abs{Y_s}}{\abs{Y}} \int_{\Omega} \hh \cdot \emC^0 \di x.
\end{align}
Using integration by parts, with $\emC^0 \in H_n^1(\Omega,\RR^d)$, we
conclude that, on $\Omega$,
\begin{align}
\label{eq:75}
  \frac{1}{\nRm} \Curl \left( \calM_{jn}\epsilon_{ijk} \frac{\partial B^0_i}{\partial x_k} \ee^n \right) - \Curl \left( \calE_{kn} \epsilon_{ijk} u^0_i B^0_j \ee^n \right) = \frac{\abs{Y_s}}{\abs{Y}} \hh.
\end{align}

In summary, from \eqref{eq:57}, \eqref{eq:71} and \eqref{eq:75}, we
obtain the macroscopic system that is about finding $\uu^0 \in H_0^1(\Omega,\RR^d)$, $\Pi \in L_0^2(\Omega)$, and $\emB^0 \in H_n^1(\Omega,\RR^d)$ satisfying on $\Omega$,
\begin{align} \label{eq:76}
 \begin{split}
 \Div \uu^0 = \Div \emB^0 &= 0,\\
\left( \uu^0 \cdot \nabla \right)\uu^0 - \Div \left( 2\calN_{ijmn} \left[ \DD (\uu^0) \right]_{ij}\ee^m \otimes \ee^n -      \Pi\, \II \right) 
&=\nRe \bg + \nAl \frac{\abs{Y_s}}{\abs{Y}}    \Curl \emB^0 \times \emB^0,\\
\frac{1}{\nRm} \Curl \left( \calM_{jn}\epsilon_{ijk} \frac{\partial B^0_i}{\partial x_k} \ee^n \right) - \Curl \left( \calE_{kn} \epsilon_{ijk} u^0_i B^0_j \ee^n \right) 
&= \frac{\abs{Y_s}}{\abs{Y}} \hh.
\end{split}
\end{align}

Here $\calN$, and $\calM$, $\calE$ are defined in \eqref{eq:68}, and
\eqref{eq:73}, respectively. It is worth to mention that by using the variational
formulation of the local problem \eqref{eq:59} and \eqref{eq:64}-\eqref{eq:65}, we
have 
\begin{align}
\label{eq:77}
  \begin{split}
    \calN_{ijmn} &= \frac{1}{\abs{Y}} \int_Y \DD_y \left(
      \UU^{ij}-\vec{\omega}^{ij} \right) : \DD_y \left(
      \UU^{mn}-\vec{\omega}^{mn} \right) \di y,\\
    \calM_{ij} &= \frac{1}{\abs{Y}} \int_{Y_s} \Curl_y \left( \vec{\Theta}^i +
      \ee^i \right) \cdot \Curl_y\left( \vec{\Theta}^j + \ee^j \right) \di y,\\
    \calE_{ij} &= \frac{1}{\abs{Y}} \int_{Y_s} \Curl_y\left( \vec{\Psi}^i +
      \ee^i \right) \cdot \Curl_y\left( \vec{\Psi}^j + \ee^j \right) \di y.
  \end{split}
\end{align}
Thus, the tensors are symmetric and elliptic. The well-posedness of
system \eqref{eq:76} now follows from the classical theory of
one-fluid magnetohydrodynamics,
c.f. \cite{giraultFiniteElementMethods2012,schotzauMixedFiniteElement2004,gunzburgerExistenceUniquenessFinite1991,gerbeauMathematicalMethodsMagnetohydrodynamics2006,guermondMixedFiniteElement2003}. By
the uniqueness of $\uu^0,\emB^0, \uu^1, \emB^1$ and $p^0$, we conclude
that the limits in \eqref{eq:21} hold for the full
sequence. \cref{sec:main-results} is proved.
\section{Conclusions}
\label{sec:conclusions}
The results obtained in Section~\ref{sec:main-results} demonstrate the \emph{effective} response of a viscous fluid with a locally periodic array of magnetic particles suspended in it. The original fine-scale problem is described by the system of equations \eqref{eq:5}-\eqref{eq:596}, and the effective equations are given by \eqref{eq:76}, in Section~\ref{sec:homogenized-problem}, with the effective coefficients defined by \eqref{eq:77}. As evident from the effective system obtained, these effective quantities depend on the instantaneous position of the particles, their geometry, and the magnetic and flow properties of the original suspension decoded in the cell problems \eqref{eq:59} and \eqref{eq:64}-\eqref{eq:65}. The effective medium is an \textit{incompressible electromagnetic fluid} described by the coupled set of Navier-Stokes and Maxwell's equations.  The effective Cauchy stress of the fluid is $\displaystyle  2\calN_{ijmn} \left[ \DD (\uu^0) \right]_{ij} \ee^m \otimes \ee^n -
      \Pi\, \II$, where $\calN$ is the effective viscosity, and the coupling between the homogenized fluid velocity $\uu$ and the homogenized magnetic field $\emB$ is given through the Lorentz force. The Maxwell's equations are represented by the combination of Amp\`ere's law, Ohm’s law, and Faraday's law, where the first two laws eliminate the electric field from the equation.
      
      It is worth mentioning that this paper is not concerned with \emph{modeling issues} for colloids with magnetizable particles, but rather focuses on the homogenization results. 
This study is the promised follow-up of the work in \cite{dangHomogenizationNondiluteSuspension2021} by the authors, where they considered a one-way coupling mechanism between the viscous fluid and the magnetic particles that are suspended in a viscous fluid and described by the linear relation between the magnetic flux density $\emB$ and the magnetic field strength $\emH$. In contrast to \cite{dangHomogenizationNondiluteSuspension2021}, this paper focuses on a \emph{non-linear} model of the given  magnetorheological fluid, where the two phases are interacting via the \emph{full (two-way) coupling} mechanism.
And, as in \cite{dangHomogenizationNondiluteSuspension2021}, the \emph{rigorous justification} of the obtained effective system is derived. This is also differing from previous contributions on the topic \cite{levySuspensionSolidParticles1983,nikaMultiscaleModelingMagnetorheological2020}, that dealt only with  formal asymptotics and did not consider the complicated non-linear model discussed in this paper.

\section*{Acknowledgements}
The work of the first
author was partially supported by NSF grant DMS-1350248.  The work of the third author was supported  by NSF grant DMS-2110036.  This
material is based upon work supported by and while serving at the
National Science Foundation for the second author Yuliya Gorb. Any
opinion, findings, and conclusions or recommendations expressed in
this material are those of the authors and do not necessarily reflect
views of the National Science Foundation.



\bibliographystyle{siamplain}
\bibliography{references}
\end{document}